\theoremstyle{plain}
\newtheorem{theorem}{Theorem}
\newtheorem{lemma}[theorem]{Lemma}
\newtheorem{proposition}[theorem]{Proposition}
\newtheorem{remark}[theorem]{Remark}
\newtheorem{notation}[theorem]{Notation}
\newtheorem{example}[theorem]{Example}
\newtheorem{definition}[theorem]{Definition}
\newtheorem{corollary}[theorem]{Corollary}
\begin{document}

\noindent{\Large Non-degenerate evolution algebras}\footnote{
The first part of this work is supported by the PCI of the UCA `Teor\'\i a de Lie y Teor\'\i a de Espacios de Banach', by the PAI with project number
 FQM298, by  the 2014-2020 ERDF Operational Programme and by the Department of Economy, Knowledge, Business and University of the Regional Government of Andalusia. Project reference: FEDER-UCA18-107643; and by the Spanish project `Algebras no conmutativas y de caminos de Leavitt. Algebras de evoluci\'on. Estructuras de Lie y variedades
de Einstein'; 
by the Spanish Government through the Ministry of Universities grant `Margarita Salas', funded by the European Union - NextGenerationEU;
FCT   UIDB/00324/2020,  UIDB/MAT/00212/2020, UIDP/MAT/00212/2020.
The second part of this work is supported by the Russian Science Foundation under grant 22-71-10001.}
  
 \bigskip

\begin{center}

 {\bf
 Antonio Jesús Calderón Martín\footnote{University of Cadiz, Puerto Real, Spain; \ ajesus.calderon@uca.es},
Amir Fernández Ouaridi\footnote{Department of Mathematics, University of Cádiz, Puerto Real, Espa\~na; 
 CMUC, Department of Mathematics, University of Coimbra, Coimbra, Portugal; \ amir.fernandez.ouaridi@gmail.com}
\&   
Ivan Kaygorodov
\footnote{CMA-UBI, Universidade da Beira Interior, Covilh\~{a}, Portugal;\  
Moscow Center for Fundamental and Applied Mathematics, Moscow,   Russia; \
Saint Petersburg  University, Russia; \ kaygorodov.ivan@gmail.com}

}

\end{center}

 \bigskip

 \bigskip  

\noindent{\bf Abstract}:
{\it In this paper we introduce a new  invariant for a non-degenerate  evolution algebra, which consists of an ordered  sequence   of evolution algebras of lower dimension,  belonging all of them to a specific family. We use this invariant to propose a method to classify non-degenerate evolution algebras, and we apply it up to dimension 3. We also use it to describe the derivations of some families of evolution algebras and the variety of evolution algebras with square  not greater than 1.}

 \bigskip

 \bigskip

\noindent {\bf Keywords}:
{\it Evolution algebra, commutative algebra,
algebraic classification, derivations, irreducible component.}

 \bigskip  
 
\noindent {\bf MSC2020}: 17A36, 17A60, 17D92.

 \bigskip

\tableofcontents

\newpage
\section*{Introduction}

It seems that the notion of evolution algebras was first introduced in 2004 in the thesis of Tian (see also, his first paper about elementary algebraic properties of  evolution algebras published together  
with Vojtěchovský  in 2006 \cite{TIAN-vo} and a book published in 2008  \cite{TIAN}).
Evolution algebras are a new type of genetic algebras that describe the functioning of the non-mendelian genetic, including, for example, the asexual propagation or the asexual inheritance, as well as physical phenomena, both dynamic and kinematic. In geometry, evolution algebras can describe the motions of particles in a graph embedded in a $3$-manifold. Also, some stochastic processes, such as Markov chains on countable state spaces, can be modelled using evolution algebras. 
Relations of evolution algebras with other structures  and the history of studies of them in the first decade are given in a recently published  survey  \cite{survey}.

In recent years, many different aspects of the evolution algebras have been considered by many authors \cite{NYM,candido,velasco,yolder,yolannal,CSV2,EL21,survey,yolt,FFN, LR13,LR17, M14, CLR,RM,LLR,P21,
yol22,csv16, CSV2t, cgo13, CLTV,EL2,CGMGM,sri,TIAN,TIAN-vo}. 
There are works in which its structure is studied \cite{csv16, CSV2, yolt}, a description of the evolution algebras up to dimension three \cite{CSV2t}, a classification of the simple evolution algebras up to dimension three \cite{yol22} and a classification of the nilpotent evolution algebras up to dimension five \cite{EL2}. Also, there are studies of some types of derivations of these algebras \cite{cgo13}, different representations using graphs \cite{CSV2t, EL2}, and even applications to certain specific population problems, among many others.

The main purpose of this paper is to introduce an invariant for non-degenerate evolution algebras and to show how it can be used to construct a general method to classify non-degenerate evolution algebras and also, to provide an example of the application of this method for dimension two and three. 
It should be mentioned that all known (particular) classifications of $3$-dimensional evolution algebras are given without defining non-isomorphic algebras \cite{CSV2t,3dimCV}. 
The present classification gives a classification of non-isomorphic $3$-dimensional non-degenerate evolution algebras.
Additionally, we show some relations between this invariant and the derivations of the algebra, and, in particular, we describe the derivations of some families of evolution algebras. Our starting point is the decomposition of a finite-dimensional  evolution algebra as a direct sum of adequate linear subspaces given in \cite{NYM}. 

\section{Definitions and preliminary results}

 Unlike other classes of non-associative algebras, the class of evolution algebras is not defined by a set of identities, instead, they are defined in the following way:

\begin{definition}
Let $\mathbb{F}$ be a field. An {\it evolution algebra} is an algebra ${\mathbb A}$ provided with
a basis ${\bf B}=\{e_{i} \ \vert \ i\in \Lambda \}$ such that $e_{i}e_{j}=0$
whenever $i\neq j$.
Such a basis ${\bf B}$ is called a {\it natural basis}.
Fixed a natural basis ${\bf B}$ in ${\mathbb A},$ the scalars $w_{ki}\in \mathbb{F}$ such that
 $e_{i}^{2} =\sum_{k\in \Lambda} w _{ki}e_{k}$ will be called the {\it structure constants} of ${\mathbb A}${\it \ relative to} ${\bf B}$ and the matrix ${\bf M}_{\bf B}({\mathbb A}):= (w_{k i})$ is said to be the {\it structure matrix of} ${\mathbb A}$ {\it relative to} ${\bf B}$. Thus, every evolution algebra is uniquely determined by its structure matrix.
\end{definition}

Any algebra in this paper will be finite-dimensional and over an algebraically closed base field $\mathbb{F}$. We will denote by $\mathbb  S_n$ the symmetric group of all permutations of the set $\{1,\ldots,n\}$. We  continue with the introduction of some definitions and results.

\begin{definition}
A {\it natural vector} in an evolution algebra ${\mathbb A}$ is a vector that can be extended into a natural basis. A {\it natural set} is a set of linearly independent vectors that can be extended into a natural basis of ${\mathbb A}$.
Any (linear) subspace ${\mathbb E}$ of ${\mathbb A}$  generated by a natural set will be called an {\it extending evolution subspace}  of ${\mathbb A}$. Such a family will be called an {\it extending natural basis} of ${\mathbb E}$.
\end{definition}


\begin{definition}
Given an algebra ${\mathbb A}$, the annihilator of the algebra is  the ideal
$${\rm Ann}({\mathbb A})=\{ x\in{\mathbb A}: x{\mathbb A}+{\mathbb A}x=0  \}$$
We say an algebra is non-degenerate if it has a trivial annihilator.
\end{definition}

The following remark is a well-known fact about non-degenerate evolution algebras.

\begin{remark}

Given a non-degenerate evolution algebra, then for every natural vector $e$ we have  $e^2\neq0$. Therefore, the structure matrix on any natural basis does not contain zero columns.

\end{remark}


\begin{theorem}[\cite{CSV2t}]
\label{conb}
Let ${\mathbb A}$ be an evolution algebra and let ${\bf B}=\left\{e_1,\ldots, e_n\right\}$ be a natural basis of ${\mathbb A}$ with structure matrix ${\bf M}_{\bf B}({\mathbb A})$. Then for another natural basis ${\bf B}'=\left\{f_{1},\ldots, f_{n}\right\}$ of ${\mathbb A}$ and ${\bf C} =(c_{ij})$ change of basis between the natural bases ${\bf B}'$ and ${\bf B}$, i.e., $f_j=\sum_i c_{ij}e_i$, then
$${\bf M}_{\bf B'}({\mathbb A})={\bf C}^{-1}{\bf M}_{\bf B}({\mathbb A}){\bf C}^{(2)},$$

where ${\bf C}^{(2)}=(c_{ij}^2)$.
\end{theorem}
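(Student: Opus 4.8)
The plan is to compute $f_j^2$ directly in the basis $\mathbf{B}$, exploit the fact that both bases are natural to kill all cross terms, and then re-express the result in the basis $\mathbf{B}'$ using the inverse change of basis. Write $\mathbf{M}_{\mathbf{B}}(\mathbb{A}) = (w_{ki})$ so that $e_i^2 = \sum_k w_{ki} e_k$, and denote by $(w'_{kj})$ the structure matrix $\mathbf{M}_{\mathbf{B}'}(\mathbb{A})$, i.e. $f_j^2 = \sum_k w'_{kj} f_k$. Since $\mathbf{B}'$ is a natural basis, $f_i f_j = 0$ for $i \neq j$, so the entire structure matrix is determined by the diagonal products $f_j^2$, and no off-diagonal products need to be tracked.

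First I would expand, using $f_j = \sum_i c_{ij} e_i$,
$$f_j^2 = \sum_{i,l} c_{ij} c_{lj}\, e_i e_l.$$
Because $\mathbf{B}$ is natural, $e_i e_l = 0$ whenever $i \neq l$, so only the terms with $i = l$ survive and
$$f_j^2 = \sum_i c_{ij}^2\, e_i^2 = \sum_i c_{ij}^2 \sum_k w_{ki} e_k = \sum_k \Bigl(\sum_i w_{ki} c_{ij}^2\Bigr) e_k.$$
This already isolates the matrix $\mathbf{M}_{\mathbf{B}}(\mathbb{A})\,\mathbf{C}^{(2)}$ as the coefficient array expressing $f_j^2$ in terms of the $e_k$.

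The remaining step is to convert the $e_k$ back into the $f$-basis. Writing $\mathbf{C}^{-1} = (d_{mk})$ and using $f_j = \sum_i c_{ij} e_i$ together with $\sum_j c_{ij} d_{jk} = \delta_{ik}$, one checks $e_k = \sum_m d_{mk} f_m$. Substituting this into the expression above and collecting the coefficient of $f_m$ gives
$$w'_{mj} = \sum_{k,i} d_{mk}\, w_{ki}\, c_{ij}^2 = \bigl(\mathbf{C}^{-1}\,\mathbf{M}_{\mathbf{B}}(\mathbb{A})\,\mathbf{C}^{(2)}\bigr)_{mj},$$
which is exactly the claimed identity. The only subtlety, and the point where care is needed, is keeping the index conventions for $\mathbf{C}$ and $\mathbf{C}^{-1}$ consistent so that the three factors multiply in the stated order. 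Otherwise no genuine obstacle arises: the naturality of $\mathbf{B}$ removes the mixed terms $e_i e_l$ before the quadratic dependence on $\mathbf{C}$ (which is what produces $\mathbf{C}^{(2)}$ rather than $\mathbf{C}$) could complicate the computation, and the naturality of $\mathbf{B}'$ guarantees that the diagonal products $f_j^2$ already capture the whole structure matrix.
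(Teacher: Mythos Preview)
Your proof is correct and is the standard direct computation for this result. Note that the paper itself does not supply a proof of this theorem; it is quoted from \cite{CSV2t} and stated without argument, so there is no in-paper proof to compare against.
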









Our classifying method parts from the decomposition below.

\begin{theorem}[\cite{NYM}, Theorem 2.11]
\label{nym}
Let ${\mathbb A}$ be an evolution algebra. Then
$${\mathbb A}= {\rm Ann}({\mathbb A})\oplus  {\mathbb E}_1 \oplus \ldots  \oplus {\mathbb E}_r,$$
where  ${\mathbb E}_1, \ldots, {\mathbb E}_r$ are extending evolution subspaces of ${\mathbb A}$ satisfying
\begin{center}
${\rm dim}({\mathbb E}_{i}^2)=1$, \ ${\mathbb E}_{i}  {\mathbb E}_{j} = 0$ \ and \    ${\rm dim}( {\mathbb E}_{i}^2 +{\mathbb E}_{j}^2)=2$ for $i \neq j$.
\end{center}Moreover, if ${\mathbb A}$ is non-degenerate, the decomposition is unique (up to the permutation of the linear subspaces).
\end{theorem}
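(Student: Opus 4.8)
The plan is to prove existence by fixing a natural basis and grouping basis vectors according to the direction of their squares, and then to prove uniqueness in the non-degenerate case by exhibiting a basis-free description of the pieces. Throughout I use that evolution algebras are commutative, so $x\mathbb{A}+\mathbb{A}x=0$ reduces to $x\mathbb{A}=0$.

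For existence, fix a natural basis $\mathbf{B}=\{e_1,\dots,e_n\}$. The first step is to identify the annihilator. For $x=\sum_i a_i e_i$ and $y=\sum_j b_j e_j$ one has $xy=\sum_i a_i b_i e_i^2$, so testing against $y=e_j$ gives $xe_j=a_j e_j^2$; hence $x\in{\rm Ann}(\mathbb{A})$ iff $a_j=0$ for every $j$ with $e_j^2\neq 0$, that is, ${\rm Ann}(\mathbb{A})={\rm span}\{e_i : e_i^2=0\}$, a subspace spanned by part of the natural basis. On the complementary index set $I=\{i : e_i^2\neq 0\}$ I would introduce the equivalence relation $i\sim j$ iff $e_i^2$ and $e_j^2$ are linearly dependent, let $C_1,\dots,C_r$ be its classes, and set $\mathbb{E}_k={\rm span}\{e_i : i\in C_k\}$. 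Each $\mathbb{E}_k$ is an extending evolution subspace by construction, and the four required properties are then routine: $\mathbb{E}_k^2$ is spanned by mutually proportional nonzero vectors, so ${\rm dim}(\mathbb{E}_k^2)=1$; disjointness of supports gives $\mathbb{E}_k\mathbb{E}_l=0$ for $k\neq l$ since $e_ie_j=0$ off the diagonal; and $i\not\sim j$ across distinct classes forces $e_i^2,e_j^2$ independent, whence ${\rm dim}(\mathbb{E}_k^2+\mathbb{E}_l^2)=2$. Since the classes together with the null-square indices partition $\mathbf{B}$, the direct sum decomposition follows.

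For uniqueness in the non-degenerate case, the key idea is to characterize the union $\bigcup_k \mathbb{E}_k$ without reference to the chosen basis. Let $L_x$ denote the left multiplication $y\mapsto xy$. Here every $e_i^2\neq 0$ (no zero columns, by the Remark) and $\mathbb{A}=\bigoplus_k \mathbb{E}_k$. Since $L_x(e_j)=a_j e_j^2$, one has ${\rm Im}(L_x)={\rm span}\{e_i^2 : a_i\neq 0\}$ for $x=\sum_i a_i e_i$; using that the lines $D_k:=\mathbb{E}_k^2$ are pairwise independent, I would show ${\rm rank}(L_x)\leq 1$ exactly when all indices $i$ with $a_i\neq 0$ lie in a single class, i.e. exactly when $x\in\mathbb{E}_k$ for some $k$. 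This yields the intrinsic identity $\{x\in\mathbb{A} : {\rm rank}(L_x)\leq 1\}=\mathbb{E}_1\cup\cdots\cup\mathbb{E}_r$, whose left-hand side depends only on the algebra structure.

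Finally, I would recover the individual subspaces from their union. The $\mathbb{E}_k$ are nonzero (as ${\rm dim}(\mathbb{E}_k^2)=1$) and lie in direct sum, so they are pairwise incomparable: if one were contained in another their intersection would be the smaller one, forcing it to vanish. Since $\mathbb{F}$ is algebraically closed, hence infinite, a subspace contained in a finite union of subspaces must lie in one of them; therefore the maximal linear subspaces contained in $\bigcup_k \mathbb{E}_k$ are precisely $\mathbb{E}_1,\dots,\mathbb{E}_r$. As the set on the left above is intrinsic, any two decompositions yield the same union and hence the same collection of maximal subspaces, giving uniqueness up to permutation. I expect the main obstacle to be this uniqueness argument, specifically isolating a basis-free invariant (the rank of $L_x$) that detects the decomposition and justifying the passage from the union back to the individual $\mathbb{E}_k$ via the infinite-field lemma; the existence part is essentially bookkeeping once the annihilator has been identified.
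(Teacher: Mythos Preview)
The paper does not actually prove this theorem: it is quoted from \cite{NYM} (their Theorem~2.11) and used as a black box, so there is no in-paper argument to compare your proposal against.

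On its own merits your argument is sound. The existence half is exactly the natural construction, and your uniqueness strategy via the intrinsic set $\{x:\operatorname{rank}(L_x)\le 1\}$ together with the infinite-field ``subspace in a finite union'' lemma is correct. One point deserves an explicit sentence: you derive the identity $\{x:\operatorname{rank}(L_x)\le 1\}=\bigcup_k\mathbb{E}_k$ using the particular natural basis $\mathbf{B}$ from the existence step, and then assert that ``any two decompositions yield the same union''. For this you need the same identity to hold for an \emph{arbitrary} decomposition $\mathbb{A}=\bigoplus_l\mathbb{E}_l'$ satisfying the hypotheses, not just the one built from $\mathbf{B}$. This is easy to supply in either of two ways. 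First, the union of extending natural bases of the $\mathbb{E}_l'$ is again a natural basis of $\mathbb{A}$ (products vanish within each piece because the set is natural, and across pieces because $\mathbb{E}_k'\mathbb{E}_l'=0$; linear independence and spanning come from the direct sum), so your computation applies verbatim in that basis. Second, and more directly, for $x=\sum_l x_l$ with $x_l\in\mathbb{E}_l'$ one has $L_x(\mathbb{E}_m')=x_m\mathbb{E}_m'$, and non-degeneracy forces $x_m\mathbb{E}_m'=(\mathbb{E}_m')^2$ whenever $x_m\neq 0$; hence $\operatorname{Im}(L_x)=\sum_{l:\,x_l\neq 0}(\mathbb{E}_l')^2$, which by the hypothesis $\dim((\mathbb{E}_k')^2+(\mathbb{E}_l')^2)=2$ has dimension $\le 1$ iff at most one $x_l$ is nonzero. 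Either route closes the only gap, which is minor.
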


\begin{definition}
Let ${\mathbb A}$ be a non-degenerate  evolution algebra. The unique, (up to permutation of the linear subspaces), decomposition
$${\mathbb A}=   {\mathbb E}_1 \oplus \ldots  \oplus {\mathbb E}_r,$$
with  ${\mathbb E}_1, \ldots, {\mathbb E}_r$  extending evolution subspaces of ${\mathbb A}$ satisfying conditions in Theorem \ref{nym}
will be called the standard extending evolution subspaces decomposition of ${\mathbb A}$.

\end{definition}






\subsection{The $\delta$-index  of a non-degenerate evolution algebra}

In this subsection, we are going to define the first  invariant that will be used in our classification method. It is a consequence of the following lemma.

\begin{lemma}
\label{princi}
Let ${\mathbb A}$, ${\mathbb A}'$ be two  non-degenerate evolution algebras with standard extending evolution decompositions  ${\mathbb A}= {\mathbb E}_1 \oplus \ldots  \oplus {\mathbb E}_r$ and ${\mathbb A}'= \mathbb E_1' \oplus \ldots  \oplus \mathbb E_s'$ respectively. If ${\mathbb A} \cong {\mathbb A}'$, then $r=s$ and $\phi({\mathbb E}_{i})={\mathbb E}_{\sigma(i)}'$ for a permutation $\sigma \in \mathbb S_r$.
\end{lemma}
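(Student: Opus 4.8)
The plan is to exploit the uniqueness part of Theorem~\ref{nym}: if I can show that the isomorphism $\phi$ transports the standard decomposition of ${\mathbb A}$ into a decomposition of ${\mathbb A}'$ meeting all the hypotheses of that theorem, then uniqueness forces it to agree, up to a permutation, with the standard decomposition of ${\mathbb A}'$.

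First I would record the elementary but crucial observation that an algebra isomorphism of evolution algebras carries natural bases to natural bases. Indeed, if ${\bf B}=\{v_1,\dots,v_n\}$ is a natural basis of ${\mathbb A}$, then $\{\phi(v_1),\dots,\phi(v_n)\}$ is a basis of ${\mathbb A}'$ and $\phi(v_a)\phi(v_b)=\phi(v_av_b)=0$ for $a\neq b$, so it is a natural basis of ${\mathbb A}'$. Consequently, if ${\mathbb E}_i$ is generated by a natural set $\{v_1,\dots,v_{d_i}\}$ extendable to ${\bf B}$, then $\phi({\mathbb E}_i)$ is generated by $\{\phi(v_1),\dots,\phi(v_{d_i})\}$, which extends to the natural basis $\phi({\bf B})$; hence each $\phi({\mathbb E}_i)$ is again an extending evolution subspace of ${\mathbb A}'$.

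Next I would check that $\mathbb{A}'=\phi({\mathbb E}_1)\oplus\cdots\oplus\phi({\mathbb E}_r)$ satisfies the three numerical conditions of Theorem~\ref{nym}. Since $\phi$ is a linear isomorphism it preserves direct sums and dimensions, and since it is an algebra homomorphism it satisfies $\phi({\mathbb E}_i)\phi({\mathbb E}_j)=\phi({\mathbb E}_i{\mathbb E}_j)$ and $\phi({\mathbb E}_i)^2=\phi({\mathbb E}_i^2)$. Therefore $\dim(\phi({\mathbb E}_i)^2)=\dim({\mathbb E}_i^2)=1$, while $\phi({\mathbb E}_i)\phi({\mathbb E}_j)=\phi(0)=0$ and $\dim(\phi({\mathbb E}_i)^2+\phi({\mathbb E}_j)^2)=\dim({\mathbb E}_i^2+{\mathbb E}_j^2)=2$ for $i\neq j$. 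Moreover $\mathbb{A}'=\phi(\mathbb{A})=\phi({\mathbb E}_1)\oplus\cdots\oplus\phi({\mathbb E}_r)$ since linear isomorphisms preserve direct sum decompositions.

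Finally, since ${\mathbb A}$ is non-degenerate and $\phi$ is an isomorphism, ${\mathbb A}'$ is non-degenerate as well, so Theorem~\ref{nym} guarantees the uniqueness of its standard decomposition. The decomposition $\mathbb{A}'=\phi({\mathbb E}_1)\oplus\cdots\oplus\phi({\mathbb E}_r)$ just produced must therefore coincide, up to a permutation of the summands, with $\mathbb{A}'=\mathbb{E}_1'\oplus\cdots\oplus\mathbb{E}_s'$; in particular $r=s$ and $\phi({\mathbb E}_i)={\mathbb E}_{\sigma(i)}'$ for some $\sigma\in\mathbb{S}_r$. I expect the only delicate point to be the first step, namely verifying that $\phi$ sends natural sets to natural sets so that the images $\phi({\mathbb E}_i)$ are genuinely extending evolution subspaces rather than merely subspaces satisfying the product relations; once this is settled, everything reduces to the uniqueness already established in Theorem~\ref{nym}.
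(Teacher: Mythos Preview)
Your argument is correct and follows essentially the same route as the paper: transport the standard decomposition of ${\mathbb A}$ through $\phi$, verify that the images satisfy the conditions of Theorem~\ref{nym}, and invoke uniqueness to match it with the given decomposition of ${\mathbb A}'$. The only difference is that you spell out explicitly why each $\phi({\mathbb E}_i)$ is an extending evolution subspace (via the observation that $\phi$ sends natural bases to natural bases), a point the paper leaves implicit.
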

\begin{proof}
Denote by  $\phi:{\mathbb A}\rightarrow {\mathbb A}'$  an isomorphism. Then $\phi({\mathbb E}_{i})^2=\phi({\mathbb E}_{i}^2)$ has dimension $1$. Also, $\phi({\mathbb E}_{i})^2+\phi({\mathbb E}_{j})^2=\phi({\mathbb E}_{i}^2+{\mathbb E}_{j}^2)$ for $i\neq j$ has dimension $2$.
Moreover, since $\phi({\mathbb E}_{i})\phi({\mathbb E}_{j})=\phi({\mathbb E}_{i} {\mathbb E}_{j})=0$ for $i\neq j$ then
${\mathbb A}'=\phi({\mathbb E}_1) \oplus \ldots  \oplus \phi({\mathbb E}_r)$ is a standard extending evolution decomposition of ${\mathbb A}'$. By the uniqueness of this decomposition  we get $r=s$ and $\phi({\mathbb E}_{i}) = {\mathbb E}_{\sigma(i)}'$ for some permutation $\sigma \in \mathbb S_r$.
\end{proof}

By Theorem \ref{nym}, the following index is well-defined.

\begin{definition}
Let ${\mathbb A}$ be a non-degenerate evolution algebra with standard extending subspaces decomposition ${\mathbb A}= {\mathbb E}_1 \oplus \ldots  \oplus {\mathbb E}_r$. Then we define the $\delta$-index of  ${\mathbb A}$ as the sequence
$$\delta({\mathbb A}):= \big({\rm dim}  ({\mathbb E}_{1}), {\rm dim}({\mathbb E}_{2}), \ldots, {\rm dim}({\mathbb E}_r)\big)$$
with ${\rm dim}({\mathbb E}_{i}) \geq {\rm dim}({\mathbb E}_{i+1})$.
\end{definition}

\begin{corollary}
\label{cor2}
Let ${\mathbb A}$, ${\mathbb A}'$ be two isomorphic non-degenerate evolution algebras. Then $\delta({\mathbb A})=\delta({\mathbb A}')$.
\end{corollary}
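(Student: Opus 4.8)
The plan is to derive this directly from Lemma \ref{princi}, which already does the structural work; the corollary is then essentially a bookkeeping argument about dimensions. First I would invoke the lemma: since ${\mathbb A} \cong {\mathbb A}'$, there exist an isomorphism $\phi : {\mathbb A} \to {\mathbb A}'$ and a permutation $\sigma \in \mathbb{S}_r$ with $r = s$ and $\phi({\mathbb E}_i) = {\mathbb E}'_{\sigma(i)}$ for every $i$.

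Next I would observe that $\phi$ is in particular a linear isomorphism, hence restricts to a linear isomorphism from ${\mathbb E}_i$ onto its image, so it preserves dimension: ${\rm dim}({\mathbb E}_i) = {\rm dim}(\phi({\mathbb E}_i)) = {\rm dim}({\mathbb E}'_{\sigma(i)})$ for each $i$. Consequently the family $\{{\rm dim}({\mathbb E}_1), \ldots, {\rm dim}({\mathbb E}_r)\}$ coincides, as a multiset, with $\{{\rm dim}({\mathbb E}'_1), \ldots, {\rm dim}({\mathbb E}'_r)\}$, the bijection between the two families being exactly $\sigma$.

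Finally I would appeal to the definition of the $\delta$-index. Since $\delta({\mathbb A})$ and $\delta({\mathbb A}')$ are obtained by listing these two equal multisets in non-increasing order, and the non-increasing rearrangement of a multiset is unique, the two resulting sequences must be identical. This yields $\delta({\mathbb A}) = \delta({\mathbb A}')$.

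There is no genuine obstacle in this argument; the only point requiring a moment's care is the reordering built into the definition of the $\delta$-index. The permutation $\sigma$ need not be order-preserving, so one cannot conclude ${\rm dim}({\mathbb E}_i) = {\rm dim}({\mathbb E}'_i)$ index by index. It is only after both tuples are sorted into non-increasing order that they are forced to agree, which is precisely why the definition of $\delta$ fixes that ordering convention. This sorting step is what converts the bijective correspondence of subspaces supplied by Lemma \ref{princi} into an equality of sequences, and it is the one place where the statement would fail if the ordering convention were dropped.
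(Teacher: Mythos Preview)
Your proof is correct and follows exactly the intended approach: the paper states this corollary without proof because it is an immediate consequence of Lemma~\ref{princi} together with the definition of the $\delta$-index, and your argument spells out precisely that deduction. The only content is the observation that $\phi$ preserves the dimensions of the ${\mathbb E}_i$ and that sorting the resulting multiset in non-increasing order yields a unique sequence, which is exactly what you wrote.
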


%

The converse of Corollary \ref{cor2} does not hold as the next example shows.

\begin{example}\label{exampler}
 Consider the non-degenerate evolution algebras defined by 
 \begin{center}${\mathbb A}: \ e_1^2 = e_1, \  e_2^2 = e_2$\  and \ ${\mathbb A}': \ e_1^2 = e_2,\ e_2^2 = e_1$.\end{center} 
 We have that $\delta({\mathbb A}) = \delta({\mathbb A}')=(2,1)$, but ${\mathbb A}$ is not isomorphic to ${\mathbb A}'$.
\end{example}

Note that every perfect evolution algebra has $\delta$-index equal to $(1, \ldots, 1)$. Recall that every simple   algebra is perfect. 
Regarding the changes of natural bases, we can observe the following remark.

\begin{remark}
\label{baseshape}
Given a non-degenerate evolution algebra ${\mathbb A}$ with $\delta({\mathbb A}) = (\delta_1, \ldots, \delta_r)$, and two natural bases  ${\bf B}={\bf B_1} \cup {\bf B_2} \cup \ldots  \cup {\bf B_r}$ and
${\bf B'}={\bf B'_1} \cup {\bf B'_2} \cup \ldots  \cup {\bf B'_r}$ of ${\mathbb A}$ given by the standard extending subspaces decomposition of ${\mathbb A}$.  Then the  change of  basis matrix has the following block form:
$$ \begin{pmatrix}
{\bf D}_{1} & 0 & 0 & 0  \\
0 &  {\bf D}_{2} & 0 & 0 \\
0 & 0 & \ddots & 0  \\
0 & 0 & 0   & {\bf D}_{r} \\
\end{pmatrix} \cdot {\bf P}$$
where any ${\bf D}_{i}$ is a  $\delta_i\times\delta_i$ block of natural vectors of ${\mathbb A}$ (as columns) and ${\bf P}$ is a permutation matrix.
\end{remark}

Observe that if in the above remark, we have that $\delta_i = 1$ for every $1\leq i\leq r$, then the only changes of natural basis are products of diagonal matrices by permutations, which is indeed a group with the composition that has been previously studied in \cite{CSV2t}.


\begin{theorem}\label{teo100}
\label{renorm}
Let $\mathbb A$ be a non-degenerate evolution algebra with  $\delta({\mathbb A}) = (\delta_1, \ldots, \delta_r)$. Then there is a natural basis ${\bf B}=\left\{e_{1},\ldots, e_{n}\right\}$ of ${\mathbb A}$ such that the structure matrix
${\bf M}_{\bf B}({\mathbb A})$ has the columns form $${\bf M}_{\bf B}({\mathbb A})=(v_1\ldots  v_1 | v_2 \ldots  v_2 | \ldots  \ldots  | v_r \ldots  v_r),$$
in such a way that ${\rm dim}({\mathbb{F}}v_i+{\mathbb{F}}v_j) = 2$ for $i\neq j$.
\end{theorem}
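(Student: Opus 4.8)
The plan is to start from a natural basis adapted to the standard extending evolution subspaces decomposition and then rescale its vectors using that the base field is algebraically closed. Write ${\mathbb A}={\mathbb E}_1\oplus\cdots\oplus{\mathbb E}_r$ for the standard decomposition, so that $\dim({\mathbb E}_i)=\delta_i$. First I would fix an extending natural basis ${\bf B}_i=\{e_{i,1},\ldots,e_{i,\delta_i}\}$ of each ${\mathbb E}_i$ and assemble ${\bf B}={\bf B}_1\cup\cdots\cup{\bf B}_r$. Since ${\mathbb E}_i{\mathbb E}_j=0$ for $i\neq j$, the products of vectors from different blocks vanish and the products within a block vanish because each ${\bf B}_i$ is a natural set; hence ${\bf B}$ is again a natural basis of ${\mathbb A}$. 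This is exactly the block-adapted basis considered in Remark~\ref{baseshape}.

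The heart of the argument is the renormalization inside each block, and it is here that $\dim({\mathbb E}_i^2)=1$ and algebraic closure enter. Fix a nonzero vector $u_i\in{\mathbb E}_i^2$. Every square $e_{i,k}^2$ lies in ${\mathbb E}_i^2$, which is one-dimensional, so $e_{i,k}^2=\mu_{i,k}u_i$ for some scalar $\mu_{i,k}$; non-degeneracy (the Remark preceding Theorem~\ref{conb}) forces $e_{i,k}^2\neq 0$, whence $\mu_{i,k}\neq 0$. Because ${\mathbb F}$ is algebraically closed, I can choose $\lambda_{i,k}$ with $\lambda_{i,k}^2=\mu_{i,k}^{-1}$ and replace $e_{i,k}$ by $e_{i,k}':=\lambda_{i,k}e_{i,k}$. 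Scaling a basis vector by a nonzero scalar $\lambda$ multiplies its square, as an element of ${\mathbb A}$, by $\lambda^2$, so $(e_{i,k}')^2=\lambda_{i,k}^2\mu_{i,k}u_i=u_i$ for every $k$. The rescaled family ${\bf B}'=\{e_{i,k}'\}$ is still a natural basis, since multiplying basis vectors by nonzero scalars preserves both linear independence and the orthogonality relations $e_{i,k}e_{j,l}=0$.

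Now the column form is essentially automatic. Equal elements of ${\mathbb A}$ have identical coordinate vectors in the fixed basis ${\bf B}'$; since $(e_{i,k}')^2=u_i$ for all $k$ in block $i$, every column of ${\bf M}_{{\bf B}'}({\mathbb A})$ arising from block $i$ equals the single coordinate vector $v_i:=[u_i]_{{\bf B}'}$. This gives precisely ${\bf M}_{{\bf B}'}({\mathbb A})=(v_1\ldots v_1\mid v_2\ldots v_2\mid\cdots\mid v_r\ldots v_r)$ with $\delta_i$ repetitions of $v_i$. Finally, for $i\neq j$ the hypothesis $\dim({\mathbb E}_i^2+{\mathbb E}_j^2)=2$ together with $\dim({\mathbb E}_i^2)=\dim({\mathbb E}_j^2)=1$ shows that $u_i$ and $u_j$ are linearly independent in ${\mathbb A}$; passing to coordinates is a linear isomorphism, so $v_i,v_j$ are linearly independent and $\dim({\mathbb F}v_i+{\mathbb F}v_j)=2$.

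I expect the only genuinely delicate point to be the renormalization step, where algebraic closure is indispensable for extracting the square roots $\lambda_{i,k}$ and where one must verify that equalizing the squares within a block does not destroy the orthogonality that makes ${\bf B}'$ a natural basis. The existence of the block-adapted natural basis is a routine consequence of Theorem~\ref{nym} and the definition of extending evolution subspaces, and the closing dimension count is a direct translation of the decomposition conditions.
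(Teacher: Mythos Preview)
Your proof is correct and follows essentially the same approach as the paper: start from a natural basis adapted to the standard decomposition, use $\dim({\mathbb E}_i^2)=1$ and non-degeneracy to see that the columns in each block are nonzero multiples of a single vector, and rescale via square roots (algebraic closure) to make them equal. The only cosmetic difference is that the paper keeps the first vector of each block fixed and normalizes the others to match its square, whereas you normalize all vectors to a chosen $u_i$; your write-up is also a bit more explicit about why the union ${\bf B}$ is natural and why $v_i,v_j$ are independent.
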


\begin{proof}

Let  ${\mathbb A}= {\mathbb E}_1 \oplus \ldots  \oplus {\mathbb E}_r$ be the standard extending subspaces decomposition of
${\mathbb A}$  and suppose ${\rm dim}({\mathbb E}_{i}) \geq {\rm dim}({\mathbb E}_{i+1})$. For any $i \in \{1,\ldots,r\}$ consider ${\bf B}_i=\left\{ e_1^i, \ldots, e_{\delta_i}^{i}\right\}$  an extending natural basis of ${\mathbb E}_i$. Then ${\bf B}=\cup_{i \in \{1,\ldots,r\}} {\bf B}_i$ is a natural basis of $\mathbb A$. For this basis we have the following structure matrix of ${\mathbb A}$ by columns:
$${\bf M}_{\bf B}({\mathbb A}) =(v_1 \,\,\, \lambda_{1,{2}}v_1\ldots  \lambda_{1,{\delta_1}} v_1 | v_2 \,\,\,
\lambda_{2,{2}}v_2\ldots  \lambda_{2,{\delta_2}} v_2 | \ldots  \ldots  | v_r\,\,\, \lambda_{r,{2}}v_r\ldots  \lambda_{r,{\delta_r}} v_r)$$
with any $\lambda_{r,s}\in {\mathbb{F}}$ and  such that
${\rm dim}({\mathbb{F}}v_i+ {\mathbb{F}}v_j) = 2$ for $i\neq j$.

Now, consider a new set ${\bf B}'$ containing the following vectors:
\begin{itemize}
    \item $f_1^i = e_1^i$ for every $1 \leq i\leq r$.

    \item $f_j^i =  \lambda_{i,j}^{-\frac{1}{2}} e_j^i$ for every $1 \leq i\leq r$ and $2 \leq j\leq \delta_{i}$.
\end{itemize}

From, Remark \ref{baseshape} the set ${\bf B}'$ is a natural basis of ${\mathbb A}$, and from Theorem \ref{conb} we have $${\bf M}_{\bf B'}({\mathbb A})=(v_1\ldots  v_1 | v_2 \ldots  v_2 | \ldots  \ldots  | v_r \ldots  v_r),$$
which completes the proof.
\end{proof}

By Theorem \ref{renorm}, fixed a basis ${\bf B}$, every non-degenerate evolution algebra ${\mathbb A}$ with  $\delta({\mathbb A}) = (\delta_1, \ldots, \delta_r)$ is isomorphic to an algebra ${\mathbb A}'$ with structure matrix  columns
$${\bf M}_{\bf B}({\mathbb A}')=(v_1\ldots  v_1 | v_2 \ldots  v_2 | \ldots  \ldots  | v_r \ldots  v_r),$$
satisfying  that ${\rm dim}({\mathbb{F}}v_i+{\mathbb{F}}v_j) = 2$ for $i\neq j$. Therefore, to obtain a classification of  non-degenerate evolution algebras, it will be enough to consider those with a structure matrix of the above form. Moreover, the changes of natural basis that preserve this shape are those with ${\bf D}_i^{\rm T} {\bf D}_{i} = \lambda_i  {\bf{1}}_{\delta_i}$ from Remark \ref{baseshape}, for some $\lambda_i\in {\mathbb{F}}$ and  where ${\bf{1}}_{\delta_i}$ denotes the ${\delta_i}\times {\delta_i}$ identity matrix.

\subsection{The $\Delta$-trace of a non-degenerate evolution algebra}
Let ${\mathbb A}$ be a non-degenerate evolution algebra with
standard extending subspaces decomposition  ${\mathbb A}= {\mathbb E}_1 \oplus \ldots  \oplus {\mathbb E}_r.$  Our next aim is to introduce a sequence of $r$ evolution algebras associated to this decomposition.
For any $i\in \{1,\ldots,r\}$,
 denote by $$\pi_{{\mathbb E}_{i}}:{\mathbb A} \to {\mathbb E}_{i}$$ the projection map onto ${\mathbb E}_{i}$. Then we  consider the algebra ${\mathbb A}_i$ defined over the vector space ${\mathbb E}_{i}$ by the product
$${\mathbb A}_i: x \cdot_{{\mathbb A}_i} y =\pi_{{\mathbb E}_{i}}(x \cdot_{\mathbb A} y)$$
for any $x, y \in {\mathbb E}_{i}$. We call to ${\mathbb A}_i$ the {\it algebra associated to} ${\mathbb E}_i$.

\begin{remark}
\label{rem12}
Let ${\mathbb A}$ be a non-degenerate evolution algebra with
standard extending subspaces decomposition ${\mathbb A}= {\mathbb E}_1 \oplus \ldots  \oplus {\mathbb E}_r.$ Then, for any $i\in \{1,\ldots,r\}$,  the algebra ${\mathbb A}_i$ associated to ${\mathbb E}_i$ satisfies:

\begin{enumerate}
    \item ${\mathbb A}_i$ is an  evolution algebra.
    \item  ${\rm dim} \ {\mathbb A}_i^2 \leq 1.$  
    \item The  structure matrix of ${\mathbb A}_i$ in any natural basis has proportional columns (see Lemma \ref{princi}).
    \item ${\mathbb A}_i$ is either the zero product  algebra ${\bf O_{n_i}}$ or  a non-degenerate evolution algebra with ${\rm dim}({\mathbb A}_{i}^{2})=1$.
\end{enumerate}

\end{remark}

\begin{definition}
Let ${\mathbb A}$ be a non-degenerate evolution algebra with
standard extending subspaces decomposition ${\mathbb A}= {\mathbb E}_1 \oplus \ldots  \oplus {\mathbb E}_r, $ and consider for any ${\mathbb E}_i$ its evolution associated algebra ${\mathbb A}_i$.
We call to the sequence of algebras $$({\mathbb A}_1,{\mathbb A}_2, \ldots,{\mathbb A}_r),$$ (unique up to permutation of the algebras), the standard sequence of algebras of $ {\mathbb A}.$
\end{definition}

\begin{example}\label{ex1}

Let ${\mathbb A}$ be an evolution algebra and let ${\bf B}=\left\{e_1, \ldots, e_5\right\}$ be a natural basis of ${\mathbb A}$ such that the structure matrix of ${\mathbb A}$ relative to the basis ${\bf B}$ is:
$${\bf M}_{\bf B}({\mathbb A})=\begin{pmatrix}
1 & 0 & -1 & 0 & 0 \\
1 & 2 & -1 & 0 & 1 \\
2 & 4 & -2 & 0 & 2 \\
2 & 0 & -2 & 0 & 0 \\
0 & 0 & 0  & 1 & 0 \\
\end{pmatrix}.$$
The unique decomposition of ${\mathbb A}$ into extending evolution subspaces is ${\mathbb E}_{1} \oplus {\mathbb E}_{2} \oplus {\mathbb E}_{3}$, where ${\mathbb E}_{1}=\langle e_1, e_3 \rangle$, ${\mathbb E}_{2}=\langle e_2, e_5 \rangle$ and ${\mathbb E}_{3}=\langle e_4 \rangle$. Therefore, $\delta({\mathbb A})=(2, 2, 1)$. Moreover, the standard sequence of algebras $({\mathbb A}_1, {\mathbb A}_2, {\mathbb A}_3)$ of $ {\mathbb A}$ is  if formed by the  evolution algebras with structure matrices:
\begin{align*}
{\bf M}_{{\bf B}_{\mathbb E_1}}({\mathbb A}_{1}) =\begin{pmatrix}
1 & -1  \\
2 & -2  \\
\end{pmatrix}, \qquad
{\bf M}_{{\bf B}_{\mathbb E_2}}({\mathbb A}_{2})=\begin{pmatrix}
2 & 1 \\
0 & 0 \\
\end{pmatrix}, \qquad
{\bf M}_{{\bf B}_{\mathbb E_3}}({\mathbb A}_{3})=\begin{pmatrix} 0 \\
\end{pmatrix},
\end{align*}
where 
${{\bf B}_{\mathbb E_1}} = \left\{e_1, e_3\right\}$, 
${{\bf B}_{\mathbb E_2}}= \left\{e_2, e_5\right\}$ and 
${{\bf B}_{\mathbb E_3}} = \left\{e_4\right\}$.
\end{example}

\medskip

By Remark \ref{rem12}, we are interested in knowing    non-degenerate evolution algebras  ${\mathbb A}$ such that ${\rm dim}({\mathbb A}^2)=1$. For any natural basis ${\bf B}$ of such an algebra ${\mathbb A}$, the structure matrix of ${\mathbb A}$ has the columns form
$${\bf M}_{\bf B}({\mathbb A})=    (v \quad \lambda_2 v \quad \ldots  \quad \lambda_{n} v),$$
where $v $ is a non-zero column and any $\lambda_r \in {\mathbb{F}}$,  also  being   $\delta({\mathbb A}) = (n)$.

The classification of this subclass of algebras is given in the following theorem:

\begin{theorem}
\label{clasifd1}
Let ${\mathbb A}$ be an $n$-dimensional non-degenerate evolution algebra with ${\rm dim}({\mathbb A}^2)=1$. We have
\begin{enumerate}
    \item if $n=1$ then ${\mathbb A}$ is isomorphic to ${\bf E}_1: e_{1}^2=e_{1}$.
    \item if $n\geq 2$ then:
    \begin{enumerate}
        \item  if $({\mathbb A}^2)^2\neq 0$, then ${\mathbb A}$ is isomorphic to     ${\bf E}_n: e_{1}^2=e_{2}^2=\ldots =e_{n}^2=e_1$.
        \item  if $({\mathbb A}^2)^2=0$, then ${\mathbb A}$ is isomorphic to ${\bf I}_n: e_{1}^2=e_{2}^2=\ldots =e_{n}^2=e_1 +  {\bf i}  e_2$.
    \end{enumerate}
\end{enumerate}
\end{theorem}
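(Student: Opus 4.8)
The plan is to reduce to the renormalized shape and then to recast the existence of a canonical natural basis as a question about the standard symmetric bilinear form on ${\mathbb F}^n$. First I would dispose of $n=1$: writing $e_1^2=\lambda e_1$ with $\lambda\neq 0$ by non-degeneracy, the rescaling $f_1=\lambda^{-1}e_1$ gives $f_1^2=f_1$, so ${\mathbb A}\cong {\bf E}_1$. For $n\geq 2$, since $\delta({\mathbb A})=(n)$, I would invoke Theorem \ref{renorm} to obtain a natural basis $\{e_1,\ldots,e_n\}$ in which every column of the structure matrix is one and the same non-zero vector $v=\sum_k w_k e_k$, that is $e_i^2=v$ for all $i$. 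A one-line computation then gives $v^2=\big(\sum_k w_k^2\big)v=:\kappa v$, so that $({\mathbb A}^2)^2={\mathbb F}v^2$ is non-zero precisely when $\kappa\neq 0$. This already splits cases (a) and (b), and it also settles distinctness at the end: $({\bf E}_n^2)^2\neq 0$ while $({\bf I}_n^2)^2=0$, and vanishing of $({\mathbb A}^2)^2$ is an isomorphism invariant, so ${\bf E}_n\not\cong{\bf I}_n$.

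The core of the argument is to exhibit, in each case, a new natural basis $\{f_j=\sum_i c_{ij}e_i\}$ of ${\mathbb A}$ whose structure matrix is exactly that of the claimed model; the isomorphism is then the identification of basis vectors. The key observation is that $f_jf_k=\langle c_j,c_k\rangle\, v$, where $c_j$ is the $j$-th column of ${\bf C}=(c_{ij})$ and $\langle x,y\rangle=\sum_i x_iy_i$ is the standard symmetric bilinear form. Hence $\{f_j\}$ is a natural basis exactly when the columns are pairwise orthogonal, and then $f_j^2=\langle c_j,c_j\rangle\,v$. So building ${\bf E}_n$ or ${\bf I}_n$ becomes the purely linear-algebraic task of finding an orthogonal basis of ${\mathbb F}^n$ with prescribed self-norms.

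In case (a), $\kappa\neq 0$: imposing $f_j^2=f_1$ for all $j$ forces all columns to share a common self-norm $\mu$ and forces $c_1=\mu v$; comparing $\langle c_1,c_1\rangle=\mu^2\kappa$ with $\mu$ pins down $\mu=\kappa^{-1}$. It then remains to complete the non-isotropic vector $c_1=\kappa^{-1}v$ to an orthogonal basis all of whose members have self-norm $\kappa^{-1}$. Since $c_1$ is non-isotropic, ${\mathbb F}^n={\mathbb F}c_1\perp c_1^{\perp}$ with $c_1^{\perp}$ non-degenerate, and over the algebraically closed field ${\mathbb F}$ one may pick an orthogonal basis of $c_1^{\perp}$ of non-isotropic vectors and rescale each to self-norm $\kappa^{-1}$; orthogonal non-isotropic vectors are independent, so ${\bf C}$ is invertible and ${\mathbb A}\cong{\bf E}_n$.

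In case (b), $\kappa=0$: here $v$ is isotropic and I would instead target $f_j^2=f_1+{\bf i}f_2$. The same bookkeeping shows all columns share a self-norm $\mu$ with $c_1+{\bf i}c_2=\mu v$, and using orthogonality together with $\kappa=0$ one checks that $\langle c_1,c_1\rangle=\mu$ is automatic, so it suffices to find a single $c_2$ with $\langle v,c_2\rangle={\bf i}$ and $\langle c_2,c_2\rangle=\mu$; then $c_1=\mu v-{\bf i}c_2$ is determined and the remaining columns are filled inside $\{v,c_2\}^{\perp}$ exactly as before. I expect the main obstacle to be this isotropic linear-algebra input: the clean route is to extend $v$ to a hyperbolic pair $\langle v,v'\rangle=1$, $\langle v',v'\rangle=\langle v,v\rangle=0$ (available since the form is non-degenerate and $n\geq 2$) and to solve for $c_2$ within $\mathrm{span}(v,v')$, say with $\mu=1$. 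Verifying invertibility of ${\bf C}$ uses that $\mathrm{span}(c_1,c_2)$ is a non-degenerate plane, hence meets its orthogonal complement trivially. This yields ${\mathbb A}\cong{\bf I}_n$ and completes the classification.
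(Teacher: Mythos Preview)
Your proof is correct and is at heart the same argument as the paper's, recast in the language of the standard symmetric bilinear form on $\mathbb{F}^n$: your observation $f_jf_k=\langle c_j,c_k\rangle\,v$ is exactly what translates the paper's direct algebra manipulations into orthogonal-basis statements. In case~(a) the paper locates the natural idempotent $\lambda$ with coefficients $v_i/\sum v_k^2$ and declares the rest ``straightforward''; your $c_1=\kappa^{-1}v$ is the coefficient vector of that same idempotent, and you supply the orthogonal-complement step the paper leaves implicit. In case~(b) the paper's auxiliary element $w$ (built to satisfy $wv=v$ and $w^2=0$) is, in your language, precisely the second member $v'$ of a hyperbolic pair for the isotropic $v$, and their $z_3,\ldots,z_n$ correspond to your orthogonal basis of $\{v,c_2\}^\perp$. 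The gain of your packaging is uniformity: both cases become a single task, ``produce an orthogonal basis with prescribed self-norms,'' handled by standard quadratic-form facts over an algebraically closed field; the paper's version is more hands-on but needs separate constructions for the two cases.
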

\begin{proof}
Let ${\mathbb A}$ be a non-degenerate evolution algebra with  ${\rm dim}({\mathbb A}^2)=1$. The case $n=1$ is trivial, therefore we will suppose  $n\geq 2$. Since ${\mathbb{F}}$ is algebraically closed, there is a natural basis of ${\mathbb A}$, ${\bf B}=\left\{e_1, \ldots, e_n\right\}$, such that $e_i^2=v=v_1e_1+\ldots +v_n e_n$ by Theorem \ref{renorm}.

\begin{enumerate}[{\it (a)}]
    \item Suppose $({\mathbb A}^2)^2\neq 0$. Now, if $\lambda=\sum \lambda_i e_i$ is an idempotent, then $\lambda^2=\sum \lambda_i^2 e_i^2=\sum \lambda_i^2 e_1^2=\sum \lambda_i e_i$. Matching the scalars we have the following system of equations:
\begin{longtable}{lcllclc lcl}
  $(\sum \lambda_i^2)v_1$&$=$&$\lambda_1,$ & 
  $(\sum \lambda_i^2)v_2$&$ =$&$ \lambda_2,$ &$
  \ldots,$ &
  $(\sum \lambda_i^2)v_n$&$=$&$\lambda_n,$
\end{longtable}
which has solution if and only if $v^2\neq0$, and the solution is $\lambda_i=\frac{v_i}{\sum v_i^2}$. Since $({\mathbb A}^2)^2\neq 0$ implies $v^2\neq0$, we have found a natural idempotent $\lambda$. From here, constructing the change of basis is straightforward. 

    \item Suppose $({\mathbb A}^2)^2= 0$, then $v^2=0$. Let $u\in {\mathbb A}$ be such that $uv=v$ and define a vector $w:=-\frac{1}{2}u^2 + u$. Clearly $wv=v$ and $w^2=0$.
    Since  ${\rm dim}\big({\rm ker}(\sum v_i x_i)\big)=n-1$, there exist a set $\left\{\lambda_3, \ldots, \lambda_n\right\}$ of linearly independent orthogonal vectors such that $\lambda_i v=0$. Assume without loss of generality that $\lambda_i^2=v$. Define $z_i:= \lambda_i-\lambda_i w$, then $\left\{z_3, \ldots, z_n\right\}$ is a set of linearly independent 
    orthogonal vectors such that $z_i^2=v$ and $z_i v=z_i w=0$.
    Finally, construct a linear map $\phi: {\mathbb A} \to {\bf I}_n$ such that
\begin{longtable}{lcl}
  $\phi(v)$&$=$&$e_1+{\bf i} e_2,$\\
  $\phi(w)$&$=$&$\frac{1}{2}e_1-\frac{{\bf i}}{2} e_2,$\\
  $\phi(z_i) $&$ =$&$e_i, \textrm{ for } i=3, \ldots, n.$
\end{longtable}
It is easy to verify that this is the isomorphism that we are looking for.
\end{enumerate}
\end{proof}


Theorem \ref{clasifd1} allows us to introduce  an order relation in the class of   non-degenerate evolution algebras with  ${\rm dim}({\mathbb A}^2)\leq 1$ as follows.

\begin{definition}

Let ${\mathbb A}$ and $ {\mathbb A}'$ be two non-degenerate evolution algebras such that   
${\rm dim}({\mathbb A}^2)\leq 1,$ 
${\rm dim}(({\mathbb A'})^2)\leq 1$. We will say  that
 ${\mathbb A} \geq_{*} {\mathbb A'}$  if:

\begin{itemize}
\item Either ${\rm dim}({\mathbb A}) \geq {\rm dim}({\mathbb A}')$ or

\item ${\rm dim}({\mathbb A}) = {\rm dim}({\mathbb A}')$ and
\begin{itemize}
    \item[(i)]  ${\mathbb A}$ is isomorphic to ${\bf O}_{n}$ and  ${\mathbb A}'$ is isomorphic to ${\bf O}_{n}$  or to ${\bf E}_{n}$ or to ${\bf I}_{n}$.

    \item[(ii)]  ${\mathbb A}$ is isomorphic to $ {\bf E}_{n}$ and ${\mathbb A}'$ is isomorphic to ${\bf E}_{n}$  or to ${\bf I}_{n}$.

    \item[(iii)]  ${\mathbb A}$ is isomorphic to ${\bf I}_{n}$ and ${\mathbb A}'$ is also isomorphic to ${\bf I}_{n}$.
\end{itemize}
\end{itemize}

\end{definition}

Theorem \ref{clasifd1} also allows us to introduce the next notation:
\begin{notation}

Let $\mathbb A$ be an $n$-dimensional non-degenerate evolution algebra such that ${\rm dim}({\mathbb A}^2)\leq 1$. We denote by
$$\overline{\mathbb A} =
\begin{cases}
      {\bf O}_n & \textrm{if } {\mathbb A}\textrm{ is isomorphic to }  {\bf O}_n \\
      {\bf E}_n & \textrm{if } {\mathbb A}\textrm{ is isomorphic to } {\bf E}_n \\
      {\bf I}_n & \textrm{if } {\mathbb A}\textrm{ is isomorphic to } {\bf I}_n
   \end{cases}
$$

\end{notation}


Let us now introduce the main invariant in our development.

\begin{definition}
Let ${\mathbb A}$ be a non-degenerate evolution algebra
with  standard sequence of algebras $({\mathbb A}_1, {\mathbb A}_2, \ldots,{\mathbb A}_r)$ in such a way that  ${\mathbb A}_i\geq_{*} {\mathbb A}_{i+1}$. Then we define the $\Delta$-trace of ${\mathbb A}$ as
$$\Delta({\mathbb A}) := (\overline{{\mathbb A}_1},\ldots, \overline{{\mathbb A}_r}).$$
\end{definition}

\begin{example}

Let ${\mathbb A}$ be the  evolution algebra in Example \ref{ex1}. Its  standard sequence of algebras was $({\mathbb A}_1, {\mathbb A}_2, {\mathbb A}_3)$ with
\begin{align*}
{\bf M}_{{\bf B}_{\mathbb E_1}}({\mathbb A}_{1}) =\begin{pmatrix}
1 & -1  \\
2 & -2  \\
\end{pmatrix}, \qquad
{\bf M}_{{\bf B}_{\mathbb E_2}}({\mathbb A}_{2})=\begin{pmatrix}
2 & 1 \\
0 & 0 \\
\end{pmatrix}, \qquad
{\bf M}_{{\bf B}_{\mathbb E_3}}({\mathbb A}_{3})=\begin{pmatrix} 0 \\
\end{pmatrix}.
\end{align*}

Since  ${\mathbb A}_1$ and ${\mathbb A}_2$ are both  isomorphic to ${\bf E}_2$ and ${\mathbb A}_3$ is isomorphic to ${\bf O_1}$, we have that
$$\Delta({\mathbb A}) := ({\bf E}_2, {\bf E}_2,{\bf O_1} ).$$

\end{example}

\medskip

This notion generalizes in some sense the notion of the diagonal subspace of an evolution algebra with a unique natural basis (for example, see \cite{yol22}) into the context of non-degenerate evolution algebras. Note that evolution algebras with a unique natural basis are precisely the non-degenerate evolution algebras with $\delta$-index a sequence of ones, sometimes called 2LI (see \cite{NYM}).  
The following result shows that $\Delta$ is  an invariant up to isomorphisms.

\begin{theorem}
\label{theo2}
Let ${\mathbb A}$ and  ${\mathbb A}'$ be two isomorphic non-degenerate evolution algebras. Then $\Delta({\mathbb A})=\Delta({\mathbb A}')$.
\end{theorem}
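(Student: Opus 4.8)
The plan is to show that an isomorphism $\phi:{\mathbb A}\to{\mathbb A}'$, which by Lemma \ref{princi} already permutes the standard extending evolution subspaces, in fact carries each associated algebra ${\mathbb A}_i$ isomorphically onto the corresponding ${\mathbb A}'_{\sigma(i)}$; the ordering built into the definition of the $\Delta$-trace will then force the two sequences to agree. First I would invoke Lemma \ref{princi} to fix a permutation $\sigma\in\mathbb S_r$ with $\phi({\mathbb E}_i)={\mathbb E}'_{\sigma(i)}$ for every $i$, where ${\mathbb A}={\mathbb E}_1\oplus\cdots\oplus{\mathbb E}_r$ and ${\mathbb A}'={\mathbb E}'_1\oplus\cdots\oplus{\mathbb E}'_s$ are the standard decompositions; in particular $r=s$.

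The key step is to verify that $\phi$ intertwines the projections, that is, $\phi\circ\pi_{{\mathbb E}_i}=\pi_{{\mathbb E}'_{\sigma(i)}}\circ\phi$ for each $i$. Writing an arbitrary $z\in{\mathbb A}$ as $z=\sum_k z_k$ with $z_k\in{\mathbb E}_k$, linearity gives $\phi(z)=\sum_k\phi(z_k)$ with $\phi(z_k)\in{\mathbb E}'_{\sigma(k)}$, which is exactly the decomposition of $\phi(z)$ along ${\mathbb A}'=\bigoplus_k{\mathbb E}'_{\sigma(k)}$; hence the ${\mathbb E}'_{\sigma(i)}$-component of $\phi(z)$ is $\phi(z_i)=\phi(\pi_{{\mathbb E}_i}(z))$, which is the asserted identity. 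Using that $\phi$ is an algebra homomorphism, for $x,y\in{\mathbb E}_i$ the computation will read
$$\phi(x\cdot_{{\mathbb A}_i}y)=\phi\big(\pi_{{\mathbb E}_i}(x\cdot_{\mathbb A}y)\big)=\pi_{{\mathbb E}'_{\sigma(i)}}\big(\phi(x)\cdot_{{\mathbb A}'}\phi(y)\big)=\phi(x)\cdot_{{\mathbb A}'_{\sigma(i)}}\phi(y),$$
so that the restriction $\phi|_{{\mathbb E}_i}$ is an isomorphism of evolution algebras ${\mathbb A}_i\to{\mathbb A}'_{\sigma(i)}$.

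To conclude, I would note that each ${\mathbb A}_i$ and each ${\mathbb A}'_j$ is, by Remark \ref{rem12} together with Theorem \ref{clasifd1}, isomorphic to exactly one of ${\bf O}_{n}$, ${\bf E}_{n}$, or ${\bf I}_{n}$, so the isomorphism ${\mathbb A}_i\cong{\mathbb A}'_{\sigma(i)}$ yields $\overline{{\mathbb A}_i}=\overline{{\mathbb A}'_{\sigma(i)}}$. Thus the multisets $\{\overline{{\mathbb A}_1},\ldots,\overline{{\mathbb A}_r}\}$ and $\{\overline{{\mathbb A}'_1},\ldots,\overline{{\mathbb A}'_r}\}$ coincide. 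Since both standard sequences are arranged according to $\geq_{*}$, and $\geq_{*}$ is a total order on the isomorphism classes of the algebras ${\bf O}_n,{\bf E}_n,{\bf I}_n$ (comparing first by dimension and then by type, with ${\bf O}_n\geq_{*}{\bf E}_n\geq_{*}{\bf I}_n$), sorting the same multiset produces the same tuple, whence $\Delta({\mathbb A})=\Delta({\mathbb A}')$.

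The main obstacle I anticipate is precisely the intertwining of $\phi$ with the projections: because the product of ${\mathbb A}_i$ is defined through $\pi_{{\mathbb E}_i}$, transporting it along $\phi$ requires that $\phi$ respect the \emph{entire} direct-sum decomposition and not merely the single block ${\mathbb E}_i$. This is exactly what Lemma \ref{princi} supplies, and once the commutation relation is in hand the remainder reduces to the classification of Theorem \ref{clasifd1} and the totality of $\geq_{*}$.
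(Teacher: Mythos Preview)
Your proof is correct and follows essentially the same route as the paper's: invoke Lemma \ref{princi}, use the intertwining $\phi\circ\pi_{{\mathbb E}_i}=\pi_{{\mathbb E}'_{\sigma(i)}}\circ\phi$ to transport the projected product, and finish via the classification of Theorem \ref{clasifd1}. Your version is in fact a bit cleaner: once you have established that $\phi|_{{\mathbb E}_i}:{\mathbb A}_i\to{\mathbb A}'_{\sigma(i)}$ is an algebra isomorphism, the equality $\overline{{\mathbb A}_i}=\overline{{\mathbb A}'_{\sigma(i)}}$ is immediate, whereas the paper separately verifies the two invariants ${\mathbb A}_i^2=0$ and $({\mathbb A}_i^2)^2=0$ by a longer explicit computation---work that is subsumed by your isomorphism statement.
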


\begin{proof}

Let us denote by $\phi:{\mathbb A} \to  {\mathbb A}'$ the isomorphism between these algebras and write  by $${\mathbb A}= {\mathbb E}_1 \oplus \ldots  \oplus {\mathbb E}_r$$ the standard extending subspaces decomposition ${\mathbb A}$. By Theorem \ref{princi} we have  $${\mathbb A}'= \phi({\mathbb E}_1) \oplus \ldots  \oplus \phi({\mathbb E}_r).$$ Consider the algebras ${\mathbb A}_i$ and ${\mathbb A}_i'$, where ${\mathbb A}_i': x \cdot_{{\mathbb A}_i'} y =\pi_{{\phi(\mathbb E}_{i})}(x \cdot_{\mathbb A'} y)$. Note that they have the same dimension $\delta_i$. Now, since $x\cdot_{\mathbb A} y = \phi^{-1}(\phi(x)\cdot_{\mathbb A'}\phi(y))$ for $x,y\in {\mathbb E}_i$, we have
 \begin{center}$x\cdot_{{\mathbb A}_i}y = 
\pi_{{\mathbb E}_i}(x\cdot_{\mathbb A}y)= 
\pi_{{\mathbb E}_i}\big(\phi^{-1}(\phi(x)\cdot_{\mathbb A'}\phi(y))\big) = 
\phi^{-1}\big(\pi_{\phi({\mathbb E}_i)}(\phi(x)\cdot_{\mathbb A'}\phi(y))\big) = \phi^{-1}\big(\phi(x)\cdot_{\mathbb A_i'}\phi(y)\big).$ \end{center}
Then, ${\mathbb A}_i^2 = 0$ if and only if $({\mathbb A}_i')^2 = 0$, and they are the algebra ${\bf O}_{\delta_i}$. Moreover, for $x,y,z,w\in {\mathbb E}_i$, we obtain
\begin{longtable}{lcl}
$(x\cdot_{{\mathbb A}_i}y)\cdot_{{\mathbb A}_i}(z\cdot_{{\mathbb A}_i}w)$  &$=$&$ \pi_{{\mathbb E}_i}(\pi_{{\mathbb E}_i}(x\cdot_{\mathbb A}y)\cdot_{\mathbb A}\pi_{{\mathbb E}_i}(z\cdot_{\mathbb A}w))$ \\
&$=$&$\pi_{{\mathbb E}_i}(\phi^{-1}(\phi(\pi_{{\mathbb E}_i}(\phi^{-1}(\phi(x)\cdot_{\mathbb A'}\phi(y))))\cdot_{\mathbb A'}\phi(\pi_{{\mathbb E}_i}(\phi^{-1}(\phi(z)\cdot_{\mathbb A'}\phi(w))))))$\\
&$=$&$\phi^{-1}(\pi_{\phi({\mathbb E}_i)}(\phi(\phi^{-1}(\pi_{\phi({\mathbb E}_i)}(\phi(x)\cdot_{\mathbb A'}\phi(y))))\cdot_{\mathbb A'}\phi(\phi^{-1}(\pi_{\phi({\mathbb E}_i)}(\phi(z)\cdot_{\mathbb A'}\phi(w))))))$ \\
&$=$&$\phi^{-1}(\pi_{\phi({\mathbb E}_i)}(\pi_{\phi({\mathbb E}_i)}(\phi(x)\cdot_{\mathbb A'}\phi(y))\cdot_{\mathbb A'}\pi_{\phi({\mathbb E}_i)}(\phi(z)\cdot_{\mathbb A'}\phi(w))))$\\ 
&$=$&$\phi^{-1}((\phi(x)\cdot_{\mathbb A_i'}\phi(y))\cdot_{\mathbb A_i'}(\phi(z)\cdot_{\mathbb A_i'}\phi(w))).$ 
\end{longtable}
Then, $({\mathbb A}_i^2)^2 = 0$ if and only if 
$(({\mathbb A}_i')^2)^2 = 0$, and they are isomorphic to the algebra ${\bf I}_{\delta_i}$ from Theorem \ref{clasifd1}. Otherwise, they are isomorphic to ${\bf E}_{\delta_i}$.
\end{proof}
From now on, if we give the $\Delta$-trace of an evolution algebra, we are assuming that it is non-degenerate.

\section{The classification method}

In the previous section, we introduced the $\Delta$-trace of a non-degenerate evolution algebra. In this section, we are going to construct a classification method of non-degenerate evolution algebras based on this invariant. 
To achieve this, the following result is essential.

\begin{theorem}
\label{res}
Let ${\mathbb A}$ be a non-degenerate evolution algebra over an algebraically closed field ${\mathbb{F}}$ with $\Delta({\mathbb A}) := (\overline{{\mathbb A}_1},\ldots, \overline{{\mathbb A}_r}).$ Then there exists a natural basis $\bf B$ of ${\mathbb A}$ such that the structure matrix of $\mathbb A$ respect to  $\bf B$ has  the following diagonal blocks:
$${\bf M}_{\bf B}({\mathbb A})=\begin{pmatrix}
\overline{{\mathbb A}_1} & * & * &  * \\
* & \overline{{\mathbb A}_2} &  *&  * \\
* &  * & \ddots &  *\\
* & * & * & \overline{{\mathbb A}_r} \\
\end{pmatrix}.$$

Also, $ {\bf M}_{\bf B}({\mathbb A}) $ has the columns form
$${\bf M}_{\bf B}({\mathbb A})=(v_1\ldots  v_1 | v_2 \ldots  v_2 | \ldots  \ldots  | v_r \ldots  v_r).$$
\end{theorem}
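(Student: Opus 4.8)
The plan is to begin from the normalized basis furnished by Theorem~\ref{renorm} and then to perform one further, columns-form-preserving change of basis inside each extending subspace $\mathbb{E}_i$, chosen so as to force its diagonal block to coincide with the normal form $\overline{\mathbb{A}_i}$.

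First I would fix the standard extending subspaces decomposition $\mathbb{A}=\mathbb{E}_1\oplus\cdots\oplus\mathbb{E}_r$, relabelled so that $\mathbb{A}_i\geq_{*}\mathbb{A}_{i+1}$ (this also respects the non-increasing dimension order needed in Theorem~\ref{renorm}), and apply Theorem~\ref{renorm} to get a natural basis $\mathbf{B}=\mathbf{B}_1\cup\cdots\cup\mathbf{B}_r$, with $\mathbf{B}_i$ an extending natural basis of $\mathbb{E}_i$, whose structure matrix is in columns form $\mathbf{M}_{\mathbf{B}}(\mathbb{A})=(v_1\ldots v_1\mid\cdots\mid v_r\ldots v_r)$. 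Writing $v_i=\sum_{j=1}^{r}v_i^{(j)}$ with $v_i^{(j)}\in\mathbb{E}_j$, the $(i,i)$ diagonal block consists of $\delta_i$ copies of the column $v_i^{(i)}$, and since $v_i^{(i)}=\pi_{\mathbb{E}_i}(e^2)$ for every $e\in\mathbf{B}_i$ this block is precisely the structure matrix of the associated algebra $\mathbb{A}_i$ in $\mathbf{B}_i$. Identifying elements of $\mathbb{E}_i$ with their coordinate vectors, one has in $\mathbb{A}_i$ the identity $(v_i^{(i)})^2=\big((v_i^{(i)})^{\rm T}v_i^{(i)}\big)\,v_i^{(i)}$, so the three alternatives $v_i^{(i)}=0$, $(v_i^{(i)})^{\rm T}v_i^{(i)}\neq0$, and $(v_i^{(i)})^{\rm T}v_i^{(i)}=0$ with $v_i^{(i)}\neq0$ correspond (via Theorem~\ref{clasifd1}) exactly to $\overline{\mathbb{A}_i}$ being $\mathbf{O}_{\delta_i}$, $\mathbf{E}_{\delta_i}$, or $\mathbf{I}_{\delta_i}$ respectively.

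Next I would apply a block-diagonal change of basis $\mathbf{C}=\mathrm{diag}(\mathbf{D}_1,\ldots,\mathbf{D}_r)$ with $\mathbf{D}_i^{\rm T}\mathbf{D}_i=\lambda_i\mathbf{1}_{\delta_i}$, which by Remark~\ref{baseshape} is exactly the type that keeps the columns form. A short computation with Theorem~\ref{conb}, using $\mathbf{1}^{\rm T}\mathbf{D}_i^{(2)}=\lambda_i\mathbf{1}^{\rm T}$, shows that the $(j,i)$ block of $\mathbf{C}^{-1}\mathbf{M}_{\mathbf{B}}(\mathbb{A})\mathbf{C}^{(2)}$ again has all its columns equal, namely to $\lambda_i\mathbf{D}_j^{-1}v_i^{(j)}$; thus the columns form is retained for free, and the new $(i,i)$ diagonal block has repeated column $\mathbf{D}_i^{\rm T}v_i^{(i)}$ (using $\mathbf{D}_i^{-1}=\lambda_i^{-1}\mathbf{D}_i^{\rm T}$). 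It therefore suffices to choose each $\mathbf{D}_i$ so that $\mathbf{D}_i^{\rm T}v_i^{(i)}$ equals the repeated column of the target: $0$ for $\mathbf{O}_{\delta_i}$, the first basis vector $e_1$ for $\mathbf{E}_{\delta_i}$, and $e_1+{\bf i}\,e_2$ for $\mathbf{I}_{\delta_i}$.

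The core of the proof, and the step I expect to be the main obstacle, is the existence of such a matrix $\mathbf{Q}:=\mathbf{D}_i^{\rm T}$, satisfying $\mathbf{Q}\mathbf{Q}^{\rm T}=\lambda_i\mathbf{1}$, that carries $v_i^{(i)}$ to the prescribed column. Writing $\mathbf{Q}=\sqrt{\lambda_i}\,\mathbf{O}$ with $\mathbf{O}$ orthogonal for the non-degenerate form $x^{\rm T}y$ (non-degenerate since $\mathbb{F}$ is algebraically closed of characteristic $\neq2$), this reduces to the transitivity of the orthogonal group on vectors with a fixed value of the associated quadratic form, a consequence of Witt's extension theorem. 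In the $\mathbf{E}$-case both $v_i^{(i)}$ and $e_1$ are non-isotropic, and the choice $\lambda_i=\big((v_i^{(i)})^{\rm T}v_i^{(i)}\big)^{-1}$ matches their norms, so $\mathbf{O}$ exists; in the $\mathbf{I}$-case (where necessarily $\delta_i\geq2$) both $v_i^{(i)}$ and $e_1+{\bf i}\,e_2$ are nonzero isotropic, so with $\lambda_i=1$ Witt's theorem again supplies $\mathbf{O}$; the $\mathbf{O}$-case is trivial. Applying $\mathbf{C}=\mathrm{diag}(\mathbf{D}_1,\ldots,\mathbf{D}_r)$ then yields a natural basis in which the global columns form persists and each diagonal block equals $\overline{\mathbb{A}_i}$, which is precisely the claim.
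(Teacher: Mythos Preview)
Your proof is correct and shares the paper's overall architecture: start from the columns-form basis of Theorem~\ref{renorm}, then apply a block-diagonal change of basis inside each $\mathbb{E}_i$ to normalize its diagonal block to $\overline{\mathbb{A}_i}$. The difference lies in how that second step is justified. The paper simply invokes the isomorphism $\phi_i:\mathbb{A}_i\to\overline{\mathbb{A}_i}$ produced in Theorem~\ref{clasifd1} and asserts (tersely, appealing back to Theorem~\ref{teo100}) that the combined basis is still natural and in columns form; the reason this works is that when $v_i^{(i)}\neq0$ a natural basis of $\mathbb{A}_i$ is automatically mutually annihilating in $\mathbb{A}$, and the images all square to the same multiple of $v_i$. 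You instead impose the condition $\mathbf{D}_i^{\rm T}\mathbf{D}_i=\lambda_i\mathbf{1}$ up front, verify by direct computation that it simultaneously forces naturality and preserves the columns form, and then appeal to Witt's extension theorem to manufacture such a $\mathbf{D}_i$ carrying $v_i^{(i)}$ to the prescribed target column. Your route makes the two side conditions (naturality, columns form) explicit and transparent, at the price of importing Witt; the paper's route stays self-contained within its earlier classification lemma but leaves more of the verification to the reader.
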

\begin{proof}

Consider  ${\mathbb A}=   {\mathbb E}_1 \oplus \ldots  \oplus {\mathbb E}_r,$
  the extending evolution subspaces decomposition of ${\mathbb A}$.
By Theorem \ref{renorm},  for any $i \in \{1,\ldots,r\}$  there is a natural basis ${\bf B}'_i=\{e_{i,1},\ldots, e_{i,k_i}\}$ of ${\mathbb E}_i$ such that the structure matrix of ${\mathbb A}$ respect to the basis ${\bf B'}:=\cup_{i=1}^{r}{\bf B}'_i$ has the columns form
$${\bf M}_{\bf B'}({\mathbb A})=(v_1\ldots  v_1 | v_2 \ldots  v_2 | \ldots  \ldots  | v_r \ldots  v_r)$$
with  ${\rm dim}({\mathbb{F}}v_i+ {\mathbb{F}}v_j) = 2$ when  $i\neq j$.

Since, for any $i \in \{1,\ldots,r\}$, the algebra ${\mathbb A}_i$ associated to the linear subspace ${\mathbb E}_i$ is isomorphic  to  an algebra
${\bf C}_i \in \{  {\bf O}_{k_i}, {\bf E}_{k_i}, {\bf I}_{k_i}\}$, there exists
a change of natural basis $\phi_i$ from ${\mathbb A}_i$ to ${\bf C}_i$ as given in Theorem \ref{clasifd1}) when  ${\bf C}_i \in
\{{\bf E}_{k_i}, {\bf I}_{k_i}\}$. In case
${\bf C}_i = {\bf O}_{k_i}$ we define $\phi_i={\bf{1}}_{k_i}$. From here, we get for any $i \in \{1,\ldots,r\}$, a   basis ${\bf B}_i=\{\phi(e_{i,1}), \ldots, \phi(e_{i,k_i})\}$ of ${\bf E}_i$.

Consider now the basis ${\bf B}$ of ${\mathbb A}$ given by
${\bf B}=\cup_{i=1}^{r}{\bf B}_i.$
This is a natural basis of ${\mathbb A}$ in satisfying that the  structure matrix  of ${\mathbb A}$ with respect to it is

$${\bf M}_{\bf B}({\mathbb A})=\begin{pmatrix}
\overline{{\mathbb A}_1} & * & * &  * \\
* & \overline{{\mathbb A}_2} &  *&  * \\
* &  * & \ddots &  *\\
* & * & * & \overline{{\mathbb A}_r} \\
\end{pmatrix}.$$

Also, we get as in Theorem \ref{teo100} that $ {\bf M}_{\bf B}({\mathbb A}) $ has the columns form
$${\bf M}_{\bf B}({\mathbb A})=(v_1\ldots  v_1 | v_2 \ldots  v_2 | \ldots  \ldots  | v_r \ldots  v_r).$$
\end{proof}



At this point, fixed a basis ${\bf B}$, to obtain a classification of the non-degenerate evolution algebras of a given dimension $n$ it is enough to classify the evolution algebras ${\mathbb A}$ with structure matrices  as in Theorem \ref{res}.

\begin{definition}
 We denote by  ${\mathcal E}(\overline{{\mathbb A}_1}, \ldots,\overline{ {\mathbb A}_r})$  the set of non-degenerate evolution algebras ${\mathbb A}$ satisfying
 $\Delta({\mathbb A}) = (\overline{{\mathbb A}_1}, \ldots, \overline{{\mathbb A}_r})$.
\end{definition}

\begin{remark}
\label{groups}

The following assertions hold:
\begin{itemize}
\item[(i)] Any isomorphism between two algebras ${\mathbb A}$ and ${\mathbb A}'$ in ${\mathcal E}(\overline{{\mathbb A}_1}, \ldots, \overline{{\mathbb A}_r})$ is of the form  in Remark \ref{baseshape} in such a way  that ${\bf D}_i$ is an automorphism of $\overline{{\mathbb A}_i}$ with ${\bf D}_i^{\rm T} {\bf D}_{i} = \lambda_i  {\bf{1}}_{\delta_i}$, and  ${\bf P}$ permutes blocks $i$ and $j$ of the diagonal if and only if $\overline{{\mathbb A}_i} = \overline{{\mathbb A}_j}$.

 Observe that  this isomorphism  does not depend on the fixed algebras ${\mathbb A}$ and ${\mathbb A}'$ in ${\mathcal E}(\overline{{\mathbb A}_1}, \ldots, \overline{{\mathbb A}_r})$.

\item[(ii)] The set of all these automorphisms  is  a group with the composition denoted by $${\mathcal G}(\overline{{\mathbb A}_1}, \ldots, \overline{{\mathbb A}_r}).$$
This group has the subgroups $${\mathcal G}_{\bf D}(\overline{{\mathbb A}_1}, \ldots, \overline{{\mathbb A}_r})$$ of all the block diagonal (by means of ${\bf D}_{i}$), matrices,  and $${\mathcal G}_{\bf P}(\overline{{\mathbb A}_1}, \ldots, \overline{{\mathbb A}_r})$$ of all the permutations of blocks $i$ and $j$ when $\overline{{\mathbb A}_i}= \overline{{\mathbb A}_j}$.

Any element in ${\mathcal G}(\overline{{\mathbb A}_1}, \ldots, \overline{{\mathbb A}_r})$ is the composition of one element in ${\mathcal G}_{\bf D}(\overline{{\mathbb A}_1}, \ldots, \overline{{\mathbb A}_r})$ and one element in ${\mathcal G}_{\bf P}(\overline{{\mathbb A}_1}, \ldots, \overline{{\mathbb A}_r})$.

\item[(iii)] We have the action

$${\mathcal G}(\overline{{\mathbb A}_1}, \ldots, \overline{{\mathbb A}_r}) \times {\mathcal E}(\overline{{\mathbb A}_1}, \ldots, \overline{{\mathbb A}_r}) \to {\mathcal E}(\overline{{\mathbb A}_1}, \ldots, \overline{{\mathbb A}_r})$$
$$ (\phi, {\bf M}_{\bf B}({\mathbb A})) \mapsto {\phi}^{-1}{\bf M}_{\bf B}({\mathbb A}){\phi}^{(2)}.$$
\end{itemize}

\end{remark}

At this point, we can build a method to classify the non-degenerate evolution algebras. However, this method will require the group of automorphisms of the algebras  ${\bf E_n}$ and ${\bf I}_n$.

\begin{theorem}
\label{th20}
The automorphisms group of ${\bf E}_n$ consists of the
elements $\phi$ of the form:
$$\phi=\begin{pmatrix}
1 &0 & 0 &\ldots   & 0 \\
0 & v_{22} & v_{23} &\ldots  & v_{2 n}  \\
0 & v_{32} & v_{33} &\ldots  & v_{3 n}  \\
\vdots & \vdots &\vdots &  \ddots &\vdots \\
0 & v_{n 2} & v_{n 3} & \ldots  & v_{nn} \\
\end{pmatrix},$$

where $\sum_{k=2}^n v_{ki}^2=1$ and $\sum_{k=2}^n v_{ki} v_{kj}=0$ for $i\neq j$. That is $\phi^{\rm T}\phi={\bf{1}}_{n}$.
\end{theorem}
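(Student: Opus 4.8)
The plan is to work directly from the definition of the product in ${\bf E}_n$, where $e_i^2=e_1$ for all $i$ and $e_ie_j=0$ for $i\neq j$ (Theorem \ref{clasifd1}). I would write an arbitrary linear endomorphism $\phi$ of ${\bf E}_n$ in the natural basis ${\bf B}=\{e_1,\ldots,e_n\}$ as $\phi(e_j)=\sum_i c_{ij}e_i$, identifying $\phi$ with the matrix ${\bf C}=(c_{ij})$. Since the product is bilinear, $\phi$ is an algebra homomorphism if and only if $\phi(e_ie_j)=\phi(e_i)\phi(e_j)$ for all $i,j$, so the whole argument reduces to evaluating both sides on basis elements and comparing coefficients. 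First I would record the two resulting families of equations: the diagonal relations coming from $\phi(e_i^2)=\phi(e_i)^2$ and the off-diagonal relations coming from $\phi(e_i)\phi(e_j)=0$ for $i\neq j$.

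Next I would compute the diagonal relations. Because $e_k^2=e_1$ and $e_ke_l=0$ for $k\neq l$, one gets $\phi(e_i)^2=\big(\sum_k c_{ki}^2\big)e_1$, whereas $\phi(e_i^2)=\phi(e_1)=\sum_k c_{k1}e_k$. Comparing the coefficients of $e_k$ for $k\geq 2$ forces $c_{k1}=0$, so the first column of ${\bf C}$ is supported on $e_1$; comparing the coefficient of $e_1$ gives $c_{11}=\sum_k c_{ki}^2$ for every $i$, so this sum is independent of $i$. Taking $i=1$ yields $c_{11}=c_{11}^2$, hence $c_{11}\in\{0,1\}$; the value $c_{11}=0$ is excluded because it would make the first column vanish and $\phi$ non-invertible, so $c_{11}=1$ and $\sum_k c_{ki}^2=1$ for all $i$.

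Then I would use the off-diagonal relations. Here $\phi(e_i)\phi(e_j)=\big(\sum_k c_{ki}c_{kj}\big)e_1$, so the homomorphism condition is exactly $\sum_k c_{ki}c_{kj}=0$ whenever $i\neq j$. Taking $i=1$ and using $c_{k1}=0$ for $k\geq 2$ together with $c_{11}=1$ gives $c_{1j}=0$ for $j\geq 2$, so the first row of ${\bf C}$ is also supported on $e_1$. Thus ${\bf C}$ is block diagonal with blocks $1$ and ${\bf V}=(c_{ij})_{2\leq i,j\leq n}$, and the surviving diagonal and off-diagonal relations read precisely $\sum_{k=2}^n v_{ki}^2=1$ and $\sum_{k=2}^n v_{ki}v_{kj}=0$ for $i\neq j$, that is $\phi^{\rm T}\phi={\bf{1}}_n$. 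Conversely, any matrix of this shape satisfies all of the above relations and is invertible since $\det({\bf V})^2=1$, hence is an automorphism; this yields both inclusions.

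The only genuinely delicate points, which I would take care to justify, are that it suffices to test the homomorphism identity on the basis ${\bf B}$ (by bilinearity of the product) and that invertibility is exactly what rules out the degenerate solution $c_{11}=0$; everything else is a bookkeeping comparison of coefficients. I would also remark, as a by-product, that the relations force $\{\phi(e_i)\}$ to be a natural basis, so one does not need a separate argument that automorphisms of ${\bf E}_n$ preserve natural bases.
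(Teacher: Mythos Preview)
Your proof is correct and follows essentially the same approach as the paper's: write $\phi$ in the natural basis, impose $\phi(e_i^2)=\phi(e_i)^2$ to obtain $c_{k1}=0$ for $k\geq 2$ and $c_{11}=1$, then use $\phi(e_i)\phi(e_j)=0$ to get $c_{1j}=0$ and the orthogonality relations. If anything, your write-up is slightly more thorough than the paper's, since you explicitly derive the off-diagonal conditions $\sum_k c_{ki}c_{kj}=0$ for all $i\neq j$ (not just $i=1$) and spell out why invertibility rules out $c_{11}=0$, both of which the paper leaves implicit.
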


\begin{proof}
Let $\phi\in {\rm Aut}  ({\bf E}_n)$, the group of automorphisms of ${\bf E}_n$, and denote $\phi(e_i)=v_{1i} e_1+ \ldots + v_{ni}e_n$, then
$$\phi(e_i)\phi(e_i)=(v_{1i}^2+\ldots + v_{ni}^2)e_1.$$
On the one hand, for $i=1$: $$v_{11} e_1+ \ldots + v_{n1} e_n=\phi(e_1)=\phi(e_1^2)=\phi(e_1)\phi(e_1)=(v_{11}^2+\ldots + v_{n1}^2)e_1.$$
Therefore $v_{i1}=0$ for $i>1$ and $v_{11}^2=v_{11}$, then $\phi(e_1)=e_1$.

On the other hand, $\phi(e_i)\phi(e_i)=\phi(e_i^2)=\phi(e_1)=e_1$ implies $(v_{1i}^2+ \ldots + v_{ni}^2)e_1=e_1$, hence
$$(v_{1i}^2+ \ldots + v_{ni}^2)=1.$$
Finally, since $\phi(e_1)\phi(e_i)=e_1 \phi(e_i)=0$, then $v_{1i}=0$.
The converse is a straightforward verification.
\end{proof}

\begin{theorem}
\label{th21}
The automorphisms group of ${\bf I}_n$ consists of the elements of the form:

$$\phi = \begin{pmatrix}
v_{11} & v_{12} & v_{13} &\ldots   & v_{1n} \\
{\bf i}v_{11}- {\bf i} & {\bf i}v_{12}+1 & v_{23} & \ldots  & v_{2n}  \\
v_{31} & {\bf i} v_{31} & v_{33} &  \ldots  & v_{3n}  \\
\vdots & \vdots & \vdots &  \ddots &  \vdots    \\
v_{n1} & {\bf i} v_{n1} & v_{n3} & \ldots  &v_{nn}\\
\end{pmatrix},$$
where $v_{1i}^2+\ldots +v_{ni}^2=v_{11}+ {\bf i}v_{12}$, and 
$\sum_{k=1}^nv_{ki} v_{kj}=0$ for $i\neq j$. That is $\phi^{\rm T}\phi=(v_{11}+ {\bf i}v_{12}) \, {\bf{1}}_{n}$.
\end{theorem}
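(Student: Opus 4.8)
The plan is to mirror the proof of Theorem \ref{th20}: translate the homomorphism conditions $\phi(e_i)\phi(e_j) = \phi(e_ie_j)$ into equations on the entries $v_{ki}$ of $\phi$, and read off the displayed shape. Write $\phi(e_i) = \sum_k v_{ki}e_k$ and set $v := e_1 + {\bf i}e_2$, so that in ${\bf I}_n$ one has $e_ke_l = \delta_{kl}v$ and hence $e_i^2 = v$ for every $i$. First I would compute, for all $i,j$,
\[
\phi(e_i)\phi(e_j) = \Big(\sum_k v_{ki}v_{kj}\Big)v,
\]
since only the diagonal products $e_k^2 = v$ survive. Because $\phi$ must send $e_ie_j = 0$ ($i\neq j$) to $0$ and $e_i^2 = v$ to $\phi(v)$, this splits into the orthogonality relations $\sum_k v_{ki}v_{kj} = 0$ for $i\neq j$, together with $(\sum_k v_{ki}^2)\,v = \phi(v)$ for every $i$.

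The second family of relations is the crux of the normalization. Since the right-hand side $\phi(v)$ is independent of $i$ while the left-hand side is a scalar multiple of the fixed vector $v\neq 0$, all the sums $\sum_k v_{ki}^2$ coincide with a common value $\lambda$, and $\phi(v) = \lambda v$. Combined with the orthogonality relations this is exactly the matrix identity $\phi^{\rm T}\phi = \lambda\,{\bf 1}_n$ claimed in the statement, and reading the first coordinate of $\phi(v) = \lambda v$ identifies $\lambda = v_{11} + {\bf i}v_{12}$. Expanding $\phi(v) = \lambda v$ in coordinates (with $v = (1,{\bf i},0,\ldots,0)^{\rm T}$) already yields the lower block of the first two columns: comparing the $e_k$-components for $k\geq 3$ gives $v_{k1} + {\bf i}v_{k2} = 0$, i.e. $v_{k2} = {\bf i}v_{k1}$.

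The step I expect to be the main obstacle is pinning down the two entries $v_{21} = {\bf i}v_{11} - {\bf i}$ and $v_{22} = {\bf i}v_{12} + 1$, because the relation $\phi(v) = \lambda v$ alone only fixes the combination $v_{21} + {\bf i}v_{22} = {\bf i}\lambda$ and leaves a spurious one-parameter freedom. The trick is to feed $\phi v = \lambda v$ back into $\phi^{\rm T}\phi = \lambda{\bf 1}_n$: assuming $\lambda\neq 0$ (forced by invertibility, as $\det(\phi)^2 = \det(\phi^{\rm T}\phi) = \lambda^n$), one gets
\[
\phi^{\rm T}v = \lambda^{-1}\phi^{\rm T}(\phi v) = \lambda^{-1}(\phi^{\rm T}\phi)v = v,
\]
so $v$ is a \emph{left} eigenvector of $\phi$ with eigenvalue $1$ rather than $\lambda$; it is precisely this shift from $\lambda$ to $1$ that produces the inhomogeneous constants $-{\bf i}$ and $+1$. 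Writing out $(\phi^{\rm T}v)_j = v_{1j} + {\bf i}v_{2j} = v_j$ for $j = 1, 2$ and solving (using ${\bf i}^{-1} = -{\bf i}$) gives exactly $v_{21} = {\bf i}v_{11} - {\bf i}$ and $v_{22} = {\bf i}v_{12} + 1$, while the cases $j\geq 3$ give the relations $v_{1j} + {\bf i}v_{2j} = 0$ that are implicitly enforced by $\phi^{\rm T}\phi = \lambda{\bf 1}_n$. This is the full displayed matrix.

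For the converse I would argue as in Theorem \ref{th20} that the verification is routine, but organize it through the quadratic form $q(x) := \sum_k x_k^2$, for which $x^2 = q(x)v$ in ${\bf I}_n$. The hypothesis $\phi^{\rm T}\phi = \lambda{\bf 1}_n$ says $q(\phi(x)) = \lambda q(x)$, whence $\phi(x)^2 = \lambda q(x)v$; on the other hand the displayed shape guarantees $\phi(v) = \lambda v$, so $\phi(x^2) = q(x)\phi(v) = \lambda q(x)v = \phi(x)^2$. Polarizing this identity (valid since $\mathrm{char}\,{\mathbb F}\neq 2$) upgrades it to $\phi(xy) = \phi(x)\phi(y)$ for all $x,y$, and $\lambda\neq 0$ makes $\phi$ invertible, so $\phi$ is an automorphism.
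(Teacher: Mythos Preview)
Your argument is correct and follows the same overall strategy as the paper --- translate $\phi(e_ie_j)=\phi(e_i)\phi(e_j)$ into conditions on the entries and extract the displayed shape --- but the crucial step, pinning down $v_{21}$ and $v_{22}$, is handled differently. The paper writes out the quadratic relations $\sum_k v_{k1}^2=\sum_k v_{k2}^2=v_{11}+{\bf i}v_{12}$ together with ${\bf i}v_{11}-v_{12}=v_{21}+{\bf i}v_{22}$ and solves this system by substitution. Your route is cleaner: having already packaged everything as $\phi^{\rm T}\phi=\lambda{\bf 1}_n$ and $\phi v=\lambda v$, you observe that $\phi^{\rm T}v=\lambda^{-1}\phi^{\rm T}\phi v=v$, so $v$ is fixed by $\phi^{\rm T}$, and the entries $v_{21}={\bf i}v_{11}-{\bf i}$, $v_{22}={\bf i}v_{12}+1$ drop out linearly from $(\phi^{\rm T}v)_j=v_j$ for $j=1,2$. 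This bypasses the quadratic system entirely and also explains conceptually why the inhomogeneous constants $-{\bf i}$ and $+1$ appear (the eigenvalue for $\phi^{\rm T}$ is $1$, not $\lambda$). Your converse, organized through the quadratic form $q$ and the identity $x^2=q(x)v$, is also more explicit than the paper's ``straightforward verification''.
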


\begin{proof}
Suppose $\phi\in {\rm Aut}  ({\bf I}_n)$ and denote $\phi(e_i)=v_{1i} e_1+ \ldots + v_{ni} e_n$, then
$$\phi(e_i)\phi(e_i)=(v_{1i}^2+\ldots + v_{ni}^2)(e_1+{\bf i}e_2).$$
We have $\phi(e_i)\phi(e_i)=\phi(e_i^2)=\phi(e_1+{\bf i} e_2)=\phi(e_1)+{\bf i} \phi(e_2)$, hence
\begin{align*}
\label{rel2}
    v_{k2}={\bf i} v_{k1} \textrm{, for $k\geq 3$}.
\end{align*}
Also, $(v_{1i}^2+\ldots + v_{ni}^2)(e_1+{\bf i}e_2)=(v_{11}+{\bf i} v_{12}) e_1+ (v_{21} +{\bf i} v_{22}) e_2$. Thus, 
\begin{center}
${\bf i} v_{11}- v_{12}=v_{21} +{\bf i} v_{22}$ and $v_{1i}^2+\ldots + v_{ni}^2=v_{11}+{\bf i} v_{12}$.
\end{center}

For $i=1,2$, we obtain the following system of equations:
\begin{longtable}{rcl}
 $v_{11}^2+  v_{21}^2+  v_{31}^2+\ldots + v_{n1}^2$&$=$&$v_{11}+{\bf i} v_{12},$\\
$ v_{12}^2+v_{22}^2-v_{31}^2-\ldots - v_{n1}^2$&$=$&$v_{11}+{\bf i} v_{12},$\\
${\bf i} v_{11}- v_{12}$&$=$&$v_{21} +{\bf i} v_{22}.$
\end{longtable}
From where $v_{21}={\bf i} v_{11}-{\bf i}$ and $v_{22}={\bf i}v_{12}+1$.
The converse is a straightforward verification.
\end{proof}

In order to describe the  classes of non-degenerate evolution algebras, we are going to use the notation introduced in \cite{EL2}, since it allows us to study the action
$${\mathcal G}_{\bf P}(\overline{{\mathbb A}_1}, \ldots, \overline{{\mathbb A}_r}) \times {\mathcal E}(\overline{{\mathbb A}_1}, \ldots, \overline{{\mathbb A}_r}) \to {\mathcal E}(\overline{{\mathbb A}_1}, \ldots, \overline{{\mathbb A}_r})$$  in a comfortable way.

Following \cite{EL2}, we recall that given an $n$-dimensional evolution algebra ${\mathbb A}$ with structure matrix ${\bf M}_{\bf B}({\mathbb A}):= (w_{k i})$ respect to the natural basis ${\bf B}$,  the graph $(V,E)$ with $V=\{1,2,\ldots,n\}$ and $E=\{(i,j)\in V \times V: w_{i,j}\neq 0\}$ is called {\it the graph associated to ${\mathbb A}$ respect to {\bf B}}. If we label the graph $(V,E)$ with the map $\omega: E \to {\mathbb{F}}$ given by $\omega((i,j))=w_{i,j}$ we get the so called {\it the weighted  graph associated to ${\mathbb A}$ respect to {\bf B}}.

Observe that given a permutation ${\bf P}\in {\mathcal G}_{\bf P}(\overline{{\mathbb A}_1}, \ldots, \overline{{\mathbb A}_r})$, we have that ${\bf P}^{-1} = {\bf P}^{\rm T}$ and ${\bf P}^{(2)} = {\bf P}$. Therefore, the group ${\mathcal G}_{\bf P}(\overline{{\mathbb A}_1}, \ldots, \overline{{\mathbb A}_r})$ acts on ${\mathcal E}(\overline{{\mathbb A}_1}, \ldots, \overline{{\mathbb A}_r})$ by conjugation. Recall that the class of a graph modulo isomorphisms contains all permutations of the vertices and it can be seen as a representative graph removing the vertices labels. Note that graph isomorphisms act on the adjacency matrix by conjugation. So, it is enough to consider  the graph associated to ${\mathbb A}$ respect to  ${\bf B}$ without labeled vertices, to get a representation of the algebra  ${\mathbb A}$  module the action of ${\mathcal G}_{\bf P}(\overline{{\mathbb A}_1}, \ldots, \overline{{\mathbb A}_r})$.

\bigskip

We have  obtained a method to  classify   $n$-dimensional non-degenerate evolution algebras ${\mathbb A}$. This consists in  following the next  steps:

\begin{enumerate}

\item Fix the possible values of  $\Delta({\mathbb A})$ (expressions  of $n$ as sum of natural numbers).

    \item For any value   ${\mathfrak{s}} \in \Delta({\mathbb A})$, construct the set ${\mathcal E}({\mathfrak{s}})$ which elements are all of the possible  $\Delta({\mathbb A})$ such that
    $\Delta({\mathbb A})={\mathfrak{s}}.$

    \item For any   $\Delta({\mathbb A}) \in {\mathcal E}({\mathfrak{s}})$.

    \begin{enumerate}
        \item Compute the group ${\mathcal G}_{\bf D}(\Delta({\mathbb A}))$.

    \item Calculate the orbits of the action $${\mathcal G}_{\bf D}(\Delta({\mathbb A})) \times  {\mathcal E}(\Delta({\mathbb A})) \to  {\mathcal E}(\Delta({\mathbb A})),$$
(see Remark \ref{groups}-(c)).

    \item (Optional) For every orbit, construct the weighted graph with an adjacency matrix the structure matrix corresponding to one representative of the family. Remove the vertices labels from the graph. In case  we obtain multiple families of orbits for the same $\Delta({\mathbb A})$, there might be a family that contains others for certain values of the weights. Combine them to reduce the number of families.

    \end{enumerate}
\end{enumerate}








\section{Classification of  2-dimensional non-degenerate evolution algebras}

As an application of the method developed in the previous section, we classify  the 2-dimensional non-degenerate evolution algebras in this section.
Fix a basis ${\bf B} = \left\{e_1, e_2\right\}$. For the sake of simplicity, we may refer to an algebra $\mathbb A$ by simply writing its structure matrix in the  basis ${\bf B}$. Also, we may assume that the characteristic of the ground field is zero in the upcoming technical sections.  Now, in order to classify the $2$-dimensional non-degenerate evolution algebras, we have to study the following cases.

\subsection{Case \texorpdfstring{$\delta({\mathbb A})=(1, 1)$}{Lg}}
Suppose the standard extending evolution subspaces decomposition is given by ${\mathbb E}_{1}=\langle e_1 \rangle$, ${\mathbb E}_{2}=\langle e_2 \rangle$. We have 3 subcases.

\subsubsection{Subcase $\Delta({\mathbb A})=({\bf O}_1, {\bf O}_1)$}

In this subcase, we are considering the algebras of the family:

$${\mathcal E}({\bf O}_1, {\bf O}_1)=\left\{{\mathbb A}:{\bf M}_{\bf B}({\mathbb A}) = \begin{pmatrix}
0      & a_{12}\\
a_{21} & 0     \\
\end{pmatrix}, \textrm{ where $a_{12} a_{21} \neq 0$}\right\}.$$
The group ${\mathcal G}_{\bf D}({\bf O}_1, {\bf O}_1)$ consists of maps $\phi$ such that:
    $$\phi = \begin{pmatrix}
    x_1   & 0 \\
    0   & x_2 \\
    \end{pmatrix} \textrm{ where } x_1x_2\neq0$$
and the action of this group on an arbitrary element of ${\mathcal E}({\bf O}_1, {\bf O}_1)$ is given by:
$${\phi}^{-1}{\bf M}_{\bf B}({\mathbb A}){\phi}^{(2)}= \begin{pmatrix}
    0   & a_{12}x_2^2{x^{-1}_1} \\
     a_{21}x_1^2{x_2^{-1}}   & 0 \\
    \end{pmatrix}.$$
By choosing $x_1= (a_{12} a_{21}^{2})^{-\frac{1}{3}}$ and 
$x_2= (a_{12}^{2} a_{21})^{-\frac{1}{3}}$, we obtain a ${\mathcal G}_{\bf D}({\bf O}_1, {\bf O}_1)$-orbit with representative $\begin{pmatrix} 0      & 1 \\
1 & 0    \\
\end{pmatrix}$. Here, the group ${\mathcal G}_{\bf P}({\bf O}_1, {\bf O}_1)=\mathbb S_2$ sends our representative to itself.

\subsubsection{Subcase $\Delta({\mathbb A})=({\bf O}_1, {\bf E}_1)$}

In this subcase, we are considering the algebras of the family:

$${\mathcal E}({\bf O}_1,  {\bf E}_1)=\left\{{\mathbb A}:{\bf M}_{\bf B}({\mathbb A}) = \begin{pmatrix}
0      & a_{12}\\
a_{21} & 1     \\
\end{pmatrix}, \textrm{ where $a_{12} a_{21} \neq 0$}\right\}.$$
The group ${\mathcal G}_{\bf D}({\bf O}_1, {\bf E}_1)$ consists of maps $\phi$ such that:
    $$\phi = \begin{pmatrix}
    x_1   & 0 \\
    0   & 1 \\
    \end{pmatrix} \textrm{ where } x_1\neq0$$
and the action of this group on an arbitrary element of ${\mathcal E}({\bf O}_1, {\bf E}_1)$ is given by:
$${\phi}^{-1}{\bf M}_{\bf B}({\mathbb A}){\phi}^{(2)}= \begin{pmatrix}
    0   & {a_{12}}{x_1^{-1}} \\
    a_{21}x_1^2   & 1 \\
    \end{pmatrix}.$$
If $x_1 = a_{21}^{-\frac{1}{2}}$, we obtain the representatives $\begin{pmatrix}
0      & \alpha \\
1 & 1    \\
\end{pmatrix}$ for $\alpha\in {\mathbb{F}}^{*}$. Here, the group ${\mathcal G}_{\bf P}({\bf O}_1, {\bf E}_1)$ is trivial.

\subsubsection{Subcase $\Delta({\mathbb A})=({\bf E}_1, {\bf E}_1)$}

In this subcase, we are considering the algebras of the family:

$${\mathcal E}({\bf E}_1, {\bf E}_1)=\left\{ {\mathbb A}:{\bf M}_{\bf B}({\mathbb A}) = \begin{pmatrix}
1      & a_{12}\\
a_{21} & 1     \\
\end{pmatrix}, \textrm{ where $a_{12} a_{21} \neq 1$}\right\}.$$
The group ${\mathcal G}_{\bf D}({\bf E}_1, {\bf E}_1)$ is trivial and the action of this group on an arbitrary element of ${\mathcal E}({\bf E}_1, {\bf E}_1)$ sends it to itself. Then, we have one ${\mathcal G}_{\bf D}({\bf E}_1, {\bf E}_1)$-orbit for every element in ${\mathcal E}({\bf E}_1, {\bf E}_1)$.
Now, since the group ${\mathcal G}_{\bf P}({\bf E}_1, {\bf E}_1)=\mathbb S_2$, we obtain the representatives $\begin{pmatrix}
1      & \alpha \\
\beta & 1     \\
\end{pmatrix}$ for $(\alpha, \beta) \in {\mathbb{F}}^2$ with $\alpha \beta \neq 1$, where $(\alpha, \beta)$ and $(\beta, \alpha)$ produce isomorphic algebras.

\subsection{Case \texorpdfstring{$ \delta({\mathbb A})=(2)$}{Lg}}

By Theorem \ref{clasifd1}, ${\mathbb A}$ is either isomorphic to ${\bf E}_2$ or to ${\bf I}_2$.

\bigskip

Summing up this section, we have the following classification theorem.

\begin{theorem}

Given a two-dimensional non-degenerate evolution algebra $\mathbb A$ over an algebraically closed field of characteristic zero ${\mathbb F}$, then it is isomorphic to only one of the following algebras:
\begin{itemize}
    \item If $\delta({\mathbb A})=(1, 1)$. 

    \begin{itemize}
        \item If $\Delta({\mathbb A})=({\bf O}_1, {\bf O}_1)$. Then it is isomorphic to the algebra ${\bf A}_{2,1}:\begin{pmatrix} 0      & 1 \\
1 & 0    \\
\end{pmatrix}$.

        \item If $\Delta({\mathbb A})=({\bf O}_1, {\bf E}_1)$. Then it is isomorphic to ${\bf A}_{2,2}^{\alpha}:\begin{pmatrix}
0      & \alpha \\
1 & 1    \\
\end{pmatrix}$ for some $\alpha\in {\mathbb{F}}^{*}$.

        \item If $\Delta({\mathbb A})=({\bf E}_1, {\bf E}_1)$. Then it is isomorphic to ${\bf A}_{2,3}^{\alpha,\beta}:\begin{pmatrix}
1      & \alpha \\
\beta & 1     \\
\end{pmatrix}$ for $(\alpha, \beta) \in {\mathbb{F}}^2$ and $\alpha \beta \neq 1$.
    \end{itemize}
    The only isomorphisms are between algebras in the same family, and they are permutations of the basis elements. Precisely, the only isomorphisms are ${\mathbb A}_{23}^{\alpha,\beta}\cong {\mathbb A}_{23}^{\beta, \alpha}$.

    \item If $\delta({\mathbb A})=(2)$. Then it is either isomorphic to ${\bf E}_2$ or to ${\bf I}_2$. 

\end{itemize}

\end{theorem}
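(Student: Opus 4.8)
The plan is to assemble the theorem as a direct synthesis of the two invariants $\delta$ and $\Delta$ together with the case-by-case orbit computations carried out above. Since $\delta(\mathbb A)$ is an ordered partition of $n=2$, the only possibilities are $\delta(\mathbb A)=(2)$ and $\delta(\mathbb A)=(1,1)$, and by Corollary \ref{cor2} this value is an isomorphism invariant; hence the two top-level cases are disjoint and exhaustive. First I would dispose of $\delta(\mathbb A)=(2)$: here $\dim(\mathbb A^2)=1$, so Theorem \ref{clasifd1} immediately yields that $\mathbb A$ is isomorphic to either ${\bf E}_2$ or ${\bf I}_2$, and these two are non-isomorphic because $(\mathbb A^2)^2$ vanishes for ${\bf I}_2$ but not for ${\bf E}_2$ (equivalently, their $\Delta$-traces differ).

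For $\delta(\mathbb A)=(1,1)$ both extending subspaces $\mathbb E_1,\mathbb E_2$ are one-dimensional, so each associated algebra $\mathbb A_i$ is one-dimensional with $\dim(\mathbb A_i^2)\le 1$; consequently $\overline{\mathbb A_i}\in\{{\bf O}_1,{\bf E}_1\}$, since the type ${\bf I}_1$ does not occur in dimension one. Using the ordering convention $\mathbb A_i\geq_{*} \mathbb A_{i+1}$, the admissible $\Delta$-traces are exactly $({\bf O}_1,{\bf O}_1)$, $({\bf O}_1,{\bf E}_1)$ and $({\bf E}_1,{\bf E}_1)$, which by Theorem \ref{theo2} are pairwise distinguished. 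For each of these three subcases I would quote the orbit analysis already performed, in which the $\mathcal G_{\bf D}$-action reduces the structure matrix to the representatives ${\bf A}_{2,1}$, ${\bf A}_{2,2}^\alpha$ and ${\bf A}_{2,3}^{\alpha,\beta}$ respectively, after which the residual permutation group $\mathcal G_{\bf P}$ is applied.

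The remaining point is to certify that the representatives are pairwise non-isomorphic except for the single relation ${\bf A}_{2,3}^{\alpha,\beta}\cong{\bf A}_{2,3}^{\beta,\alpha}$. Algebras with different $\delta$-index or different $\Delta$-trace cannot be isomorphic by Corollary \ref{cor2} and Theorem \ref{theo2}, so only collisions inside a single family $\mathcal E(\Delta)$ need be ruled out. The hard part is precisely this internal rigidity: by Remark \ref{groups}(i) every isomorphism between two members of $\mathcal E(\Delta)$ must be the composition of an element of $\mathcal G_{\bf D}$ with a block permutation in $\mathcal G_{\bf P}$, and the orbit computation shows the $\mathcal G_{\bf D}$-orbits are already in bijection with the displayed parameters. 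Thus the only surviving identifications come from $\mathcal G_{\bf P}$: it is trivial for $({\bf O}_1,{\bf E}_1)$, so the parameter $\alpha\in\mathbb F^{*}$ is a genuine invariant of ${\bf A}_{2,2}^\alpha$; whereas $\mathcal G_{\bf P}=\mathbb S_2$ for $({\bf E}_1,{\bf E}_1)$ swaps the two diagonal blocks and hence sends $(\alpha,\beta)$ to $(\beta,\alpha)$, giving the stated isomorphism and no other. Collecting these facts yields the list in the statement.
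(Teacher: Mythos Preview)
Your approach mirrors the paper's exactly: the theorem is a synthesis of the case-by-case orbit computations of Section~3, with the $\delta$-index and $\Delta$-trace separating the top-level cases and the group $\mathcal G=\mathcal G_{\bf D}\cdot\mathcal G_{\bf P}$ from Remark~\ref{groups} governing the identifications inside each $\mathcal E(\Delta)$.

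There is, however, a gap in the ``internal rigidity'' step for $\Delta(\mathbb A)=({\bf O}_1,{\bf E}_1)$. You assert that ``the orbit computation shows the $\mathcal G_{\bf D}$-orbits are already in bijection with the displayed parameters'', but the computation in the paper only establishes that every algebra in $\mathcal E({\bf O}_1,{\bf E}_1)$ lies in the orbit of \emph{some} ${\bf A}_{2,2}^{\alpha}$; it does not verify that distinct values of $\alpha$ give distinct orbits, and in fact they do not. The element $\phi=\mathrm{diag}(-1,1)\in\mathcal G_{\bf D}({\bf O}_1,{\bf E}_1)$ satisfies
\[
\phi^{-1}\begin{pmatrix}0&\alpha\\1&1\end{pmatrix}\phi^{(2)}
=\begin{pmatrix}-1&0\\0&1\end{pmatrix}\begin{pmatrix}0&\alpha\\1&1\end{pmatrix}\begin{pmatrix}1&0\\0&1\end{pmatrix}
=\begin{pmatrix}0&-\alpha\\1&1\end{pmatrix},
\]
so ${\bf A}_{2,2}^{\alpha}\cong{\bf A}_{2,2}^{-\alpha}$ and your claim that $\alpha\in\mathbb F^{*}$ is a ``genuine invariant'' fails; the actual orbit invariant is $\alpha^{2}=a_{12}^{2}a_{21}$. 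This oversight is shared by the paper's own argument (compare with the $3$-dimensional subcase $({\bf E}_2,{\bf O}_1)$, where the analogous sign identification \emph{is} recorded). Your treatment of $({\bf O}_1,{\bf O}_1)$ and $({\bf E}_1,{\bf E}_1)$ is fine, since there $\mathcal G_{\bf D}$ acts either transitively or trivially on the chosen representatives.
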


\section{Classification of  3-dimensional non-degenerate evolution algebras}

By a similar process, we can obtain the classification of the $3$-dimensional non-degenerate evolution algebras.
Fix a basis ${\bf B} = \left\{e_1, e_2, e_3\right\}$. In this section, the following remark will be used.

\begin{remark} By Theorem \ref{th20} and Theorem \ref{th21}, we have the following groups of automorphisms:
\begin{longtable}{lclclcl}

 ${\rm Aut}  ({\bf E}_2)$&$=$&$\left\{\begin{pmatrix}
1   &  0 \\
0 & \pm 1 \\
\end{pmatrix} \right\};$ &  \ &

 ${\rm Aut}  ({\bf I}_2)$&$=$&$\left\{\begin{pmatrix}
x   &  {\bf i} - {\bf i} x\\
{\bf i}x-{\bf i} &  x \\
\end{pmatrix}: x\in {\mathbb{F}}\setminus \left\{\frac{1}{2}\right\} \right\}.$

\end{longtable}
\end{remark}

To classify the $3$-dimensional non-degenerate evolution algebras, we have to study the following cases.

\subsection{Case \texorpdfstring{$\delta({\mathbb A})=(1, 1, 1) $}{Lg}}
Suppose the standard extending evolution subspaces decomposition is given by ${\mathbb E}_{1}=\langle e_1 \rangle$, ${\mathbb E}_{2}=\langle e_2 \rangle$ and ${\mathbb E}_{3}=\langle e_3 \rangle$.

\subsubsection{Subcase $\Delta({\mathbb A})=({\bf O}_1, {\bf O}_1, {\bf O}_1)$} 

In this subcase, we are considering the algebras of the family:

$${\mathcal E}({\bf O}_1, {\bf O}_1, {\bf O}_1)=\left\{{\mathbb A}:{\bf M}_{\bf B}({\mathbb A}) = \begin{pmatrix}
0      & a_{12} & a_{13}\\
a_{21} & 0 & a_{23}    \\
a_{31} & a_{32} & 0    \\
\end{pmatrix} \textrm{ and } \delta({\mathbb A})=(1, 1, 1)\right\}.$$

The group ${\mathcal G}_{\bf D}({\bf O}_1, {\bf O}_1, {\bf O}_1)$ consists of maps $\phi$ such that:
    $$\phi = \begin{pmatrix}
    x_1   & 0 & 0 \\
    0   & x_2 & 0 \\
    0   & 0 & x_3 \\
    \end{pmatrix} \textrm{ where } x_1x_2x_3\neq0$$
and the action of this group on an arbitrary element of ${\mathcal E}({\bf O}_1, {\bf O}_1, {\bf O}_1)$ is given by:
$${\phi}^{-1}{\bf M}_{\bf B}({\mathbb A}){\phi}^{(2)}= \begin{pmatrix}
0      & {a_{12}x_2^2}{x_1^{-1}} & {a_{13}x_3^2}{x_1^{-1}}\\
{a_{21}x_1^2}{x_2^{-1}} & 0 & {a_{23}x_3^2}{x_2^{-1}}    \\
{a_{31}x_1^2}{x_3^{-1}} & {a_{32}x_2^2}{x_3^{-1}} & 0    \\
\end{pmatrix}.$$

Let us denote by $\tau_{ij}$ the transposition that swaps elements $e_i$ and $e_j$ of the basis.

{
\begin{itemize}
    \item If $a_{12}\neq0$, $a_{23}\neq0$ and $a_{31}\neq0$, then we have $\begin{pmatrix}
0      & 1 & \alpha \\
\beta & 0 & 1    \\
1 & \gamma & 0    \\
\end{pmatrix}$, for any $\alpha, \beta, \gamma\in {\mathbb{F}}$.

    \item If $a_{12}=0$, $a_{23}\neq0$ and $a_{31}\neq0$, then we have $\begin{pmatrix}
0      & 0 & \alpha \\
\beta & 0 & 1    \\
1 & 1 & 0    \\
\end{pmatrix}$, for $\alpha, \beta\in {\mathbb{F}}$, $\beta\neq0$. Two cases arise:
\begin{itemize}
    \item If $\alpha\neq0$, choose the permutation $\tau_{12}$ of the basis to obtain an algebra of the first case. 
    \item If $\alpha=0$, then we have a zero row, so it can not be isomorphic to the first case. Therefore, we have a second family $\begin{pmatrix}
0      & 0 & 0 \\
\beta & 0 & 1    \\
1 & 1 & 0    \\
\end{pmatrix}$, for $\beta\in {\mathbb{F}}^*$.
\end{itemize}

    \item If $a_{12}\neq0$, $a_{23}=0$ and $a_{31}\neq0$, then we have $\begin{pmatrix}
0      & 1 & 1\\
\alpha & 0 & 0    \\
1 & \beta & 0    \\
\end{pmatrix}$, for $\alpha, \beta\in {\mathbb{F}}$, $\beta\neq0.$
\begin{itemize}
    \item If $\alpha\neq0$, again choose the permutation $\tau_{12}$ to obtain the first case. 
    \item If $\alpha=0$,  then choose $\tau_{12}$ to obtain the second family. 
\end{itemize}

    \item If $a_{12}\neq0$, $a_{23}\neq0$ and $a_{31}=0$, then we have $\begin{pmatrix}
0      & 1 & \beta \\
1 & 0 & 1    \\
0 & \alpha & 0    \\
\end{pmatrix}$, for $\alpha, \beta\in {\mathbb{F}}$, $\beta\neq0.$  
\begin{itemize}
    \item If $\alpha\neq0$, there is a suitable permutation of the basis to obtain an algebra of the first case. 
    \item If $\alpha=0$, then choose $\tau_{13}$ to obtain the second family. 
\end{itemize}

    \item If $a_{12}=0$, $a_{23}=0$ and $a_{31}\neq0$, then we have $\begin{pmatrix}
0      & 0 & 1\\
\alpha & 0 & 0    \\
1 & 1 & 0    \\
\end{pmatrix}$, for $\alpha\in {\mathbb{F}}^{*}$.

    \item If $a_{12}=0$, $a_{23}\neq0$ and $a_{31}=0$, then we have $\begin{pmatrix}
0      & 0 & \alpha\\
1 & 0 & 1    \\
0 & 1 & 0    \\
\end{pmatrix}$, for $\alpha\in {\mathbb{F}}^{*}$.

    \item If $a_{12}\neq0$, $a_{23}=0$ and $a_{31}=0$, then we have $\begin{pmatrix}
0      & 1 & 1\\
1 & 0 & 0    \\
0 & \alpha & 0    \\
\end{pmatrix}$, for $\alpha\in {\mathbb{F}}^{*}$.

    \item If $a_{12}=0$, $a_{23}=0$ and $a_{31}=0$, then we have $\begin{pmatrix}
0      & 0 & 1\\
1 & 0 & 0    \\
0 & 1 & 0    \\
\end{pmatrix}$. 
\end{itemize}
}

Note that applying $\tau_{12}$ to the last four cases give us the first case.

\subsubsection{Subcase $\Delta({\mathbb A})=({\bf O}_1, {\bf O}_1, {\bf E}_1)$}

In this subcase, we are considering the algebras of the family:

$${\mathcal E}({\bf O}_1, {\bf O}_1, {\bf E}_1)=\left\{{\mathbb A}:{\bf M}_{\bf B}({\mathbb A}) = \begin{pmatrix}
0      & a_{12} & a_{13}\\
a_{21} & 0 & a_{23}    \\
a_{31} & a_{32} & 1    \\
\end{pmatrix} \textrm{ and } \delta({\mathbb A})=(1, 1, 1)\right\}.$$
The group ${\mathcal G}_{\bf D}({\bf O}_1, {\bf O}_1, {\bf E}_1)$ consists of maps $\phi$ such that:
    $$\phi = \begin{pmatrix}
    x_1   & 0 & 0 \\
    0   & x_2 & 0 \\
    0   & 0 & 1 \\
    \end{pmatrix} \textrm{ where } x_1x_2\neq0$$
and the action of this group on an arbitrary element of ${\mathcal E}({\bf O}_1, {\bf O}_1, {\bf E}_1)$ is given by:
$${\phi}^{-1}{\bf M}_{\bf B}({\mathbb A}){\phi}^{(2)}= \begin{pmatrix}
0      & {a_{12}x_2^2}{x_1^{-1}} & {a_{13}}{x_1^{-1}}\\
{a_{21}x_1^2}{x_2^{-1}} & 0 & {a_{23}}{x_2^{-1}}    \\
{a_{31}x_1^2} & {a_{32}x_2^2} & 1    \\
\end{pmatrix}.$$

At this point, the following cases arise:
\begin{itemize}
    \item If $a_{31}\neq0$ and $a_{32}\neq0$, then we have $\begin{pmatrix}
0      & \alpha & \beta\\
\gamma & 0 & \epsilon    \\
1 & 1 & 1    \\
\end{pmatrix}$ for $\alpha, \beta, \gamma, \epsilon\in {\mathbb{F}}$ .

    \item If $a_{31} = 0$ and $a_{32}\neq0$, then $a_{21} \neq 0$, and we have $\begin{pmatrix}
0      & \alpha & \beta\\
1 & 0 & \gamma    \\
0 & 1 & 1    \\
\end{pmatrix}$ for $\alpha, \beta, \gamma \in {\mathbb{F}}$ .

    \item If $a_{31} \neq 0$ and $a_{32} = 0$, then $a_{12} \neq 0$, and choosing $\tau_{12}$, we are in the previous case.

    \item If $a_{31} = 0$ and $a_{32} = 0$, then $a_{12} \neq 0$ and  $a_{21} \neq 0$, and we have $\begin{pmatrix}
0      & 1 & \alpha\\
1 & 0 & \beta    \\
0 & 0 & 1    \\
\end{pmatrix}$ for $\alpha, \beta\in {\mathbb{F}}$ .
\end{itemize}

Recall that the parameters on the previous cases are subject to the condition  $\Delta({\mathbb A})=({\bf O}_1, {\bf E}_1, {\bf E}_1)$.

\subsubsection{Subcase $\Delta({\mathbb A})=({\bf O}_1, {\bf E}_1, {\bf E}_1)$}

In this subcase, we are considering the algebras of the family:

$${\mathcal E}({\bf O}_1, {\bf E}_1, {\bf E}_1)=\left\{{\mathbb A}:{\bf M}_{\bf B}({\mathbb A}) = \begin{pmatrix}
0      & a_{12} & a_{13}\\
a_{21} & 1 & a_{23}    \\
a_{31} & a_{32} & 1    \\
\end{pmatrix} \textrm{ and } \delta({\mathbb A})=(1, 1, 1)\right\}.$$

The group ${\mathcal G}_{\bf D}({\bf O}_1, {\bf E}_1, {\bf E}_1)$ consists of maps $\phi$ such that:
    $$\phi = \begin{pmatrix}
    x_1   & 0 & 0 \\
    0   & 1 & 0 \\
    0   & 0 & 1 \\
    \end{pmatrix}, \textrm{ where } x_1\neq0$$
and the action of this group on an arbitrary element of ${\mathcal E}({\bf O}_1, {\bf E}_1, {\bf E}_1)$ is given by:
$${\phi}^{-1}{\bf M}_{\bf B}({\mathbb A}){\phi}^{(2)}= \begin{pmatrix}
0      & {a_{12}}{x_1^{-1}} & {a_{13}}{x_1^{-1}}\\
{a_{21}x_1^2} & 1 & a_{23}    \\
{a_{31}x_1^2} & a_{32} & 1    \\
\end{pmatrix}.$$

Note that here we have to distinguish the following two cases:
\begin{itemize}
    \item  If $a_{21}\neq0$, then we have $\begin{pmatrix}
0      & \alpha & \beta \\
1 & 1 & \gamma    \\
\epsilon & \zeta & 1    \\
\end{pmatrix}$ for $\alpha, \beta, \gamma,  \epsilon, \zeta \in {\mathbb{F}}$, such that $\Delta({\mathbb A})=({\bf O}_1, {\bf E}_1, {\bf E}_1)$.

    \item If $a_{21}=0$, then $a_{31}\neq0$ and using a suitable permutation we are in the previous case.
\end{itemize}

\subsubsection{Subcase $\Delta({\mathbb A})=({\bf E}_1, {\bf E}_1, {\bf E}_1)$}

In this subcase, we are considering the algebras of the family:

$${\mathcal E}({\bf E}_1, {\bf E}_1, {\bf E}_1)=\left\{{\mathbb A}:{\bf M}_{\bf B}({\mathbb A}) = \begin{pmatrix}
1      & a_{12} & a_{13}\\
a_{21} & 1 & a_{23}    \\
a_{31} & a_{32} & 1    \\
\end{pmatrix} \textrm{ and } \delta({\mathbb A})=(1, 1, 1)\right\}.$$

The group ${\mathcal G}_{\bf D}({\bf E}_1, {\bf E}_1, {\bf E}_1)$ is trivial and the action of this group on an arbitrary element of ${\mathcal E}({\bf E}_1, {\bf E}_1, {\bf E}_1)$ sends it to itself. Then, we have one ${\mathcal G}_{\bf D}({\bf E}_1, {\bf E}_1, {\bf E}_1)$-orbit for every element in ${\mathcal E}({\bf E}_1, {\bf E}_1, {\bf E}_1)$.

\subsection{Case \texorpdfstring{$\delta({\mathbb A})=(2, 1) $}{Lg}}
Suppose the standard extending evolution subspaces decomposition is given by ${\mathbb E}_{1}=\langle e_1, e_2 \rangle$, ${\mathbb E}_{2}=\langle e_3 \rangle$.
Observe that since ${\mathcal G}_{\bf P}$ is trivial, then ${\mathcal G} = {\mathcal G}_{\bf D}$.

\subsubsection{Subcase $\Delta({\mathbb A})=({\bf O}_2, {\bf O}_1)$}

In this subcase, we are considering the algebras of the family:

$${\mathcal E}({\bf O}_2, {\bf O}_1)=\left\{{\mathbb A}:{\bf M}_{\bf B}({\mathbb A}) = \begin{pmatrix}
0       & 0         & a_{13}    \\
0       & 0         & a_{23}    \\
a_{31}  & a_{31}    & 0         \\
\end{pmatrix} \textrm{ and } \delta({\mathbb A})=(2, 1)\right\}.$$

The group ${\mathcal G}({\bf O}_2, {\bf O}_1)$ consists of maps $\phi$ such that:
    $$\phi = \begin{pmatrix}
    x_{11}  & x_{12} & 0 \\
    x_{21}  & x_{22} & 0 \\
    0   & 0 & x_{33} \\
    \end{pmatrix},$$
  where $x_{11}x_{22}x_{33}\neq x_{12}x_{21}x_{33}$  and 
  $\phi_1^{\rm T}\phi_1 = \lambda {\bf{1}}_{2}$  for 
  $\lambda\in {\mathbb{F}}$ and  $\phi_1=\begin{pmatrix}
    x_{11}  & x_{12}  \\
    x_{21}  & x_{22}  \\
    \end{pmatrix}.$ 
    
The action of this group on an arbitrary element of ${\mathcal E}({\bf O}_2, {\bf O}_1)$ is given by:
$${\phi}^{-1}{\bf M}_{\bf B}({\mathbb A}){\phi}^{(2)}= \begin{pmatrix}
0       & 0         & \frac{(a_{23}x_{12}-a_{13}x_{22})x_{33}^2}{x_{12}x_{21}-x_{11}x_{22}}    \\
0       & 0         & \frac{(a_{13}x_{21}-a_{23}x_{11})x_{33}^2}{x_{12}x_{21}-x_{11}x_{22}}    \\
\frac{a_{31}(x_{11}^2+x_{21}^2)}{x_{33}}  & \frac{a_{31}(x_{12}^2+x_{22}^2)}{x_{33}}    & 0         \\
\end{pmatrix},$$ 
where  $x_{11}^2+x_{21}^2=x_{12}^2+x_{22}^2=\lambda.$

Here, we have two cases:

\begin{itemize}
    \item  If $a_{23}^2 + a_{13}^2 \neq 0$, we obtain the algebra corresponding to the matrix  
    $\begin{pmatrix}
0       & 0         & 0    \\
0       & 0         & 1    \\
1  & 1    & 0         \\
\end{pmatrix}$.

    \item If $a_{23}^2 + a_{13}^2 = 0$, we obtain 
    $\begin{pmatrix}
0       & 0         & 1    \\
0       & 0         & i    \\
1  & 1    & 0         \\
\end{pmatrix}$.
    
\end{itemize}

\subsubsection{Subcase $\Delta({\mathbb A})=({\bf O}_2, {\bf E}_1)$}

In this subcase, we are considering the algebras of the family:

$${\mathcal E}({\bf O}_2, {\bf E}_1)=\left\{{\mathbb A}:{\bf M}_{\bf B}({\mathbb A}) = \begin{pmatrix}
0       & 0         & a_{13}    \\
0       & 0         & a_{23}    \\
a_{31}  & a_{31}    & 1         \\
\end{pmatrix} \textrm{ and } \delta({\mathbb A})=(2, 1)\right\}.$$

The group ${\mathcal G}({\bf O}_2, {\bf E}_1)$ consists of maps $\phi$ such that:
    $$\phi = \begin{pmatrix}
    x_{11}  & x_{12} & 0 \\
    x_{21}  & x_{22} & 0 \\
    0   & 0 & 1 \\
    \end{pmatrix},$$ 
    where  $x_{11}x_{22}\neq x_{12}x_{21},$ \ 
    $\phi_1^{\rm T}\phi_1 = \lambda {\bf{1}}_{2}$ for 
    $\lambda\in {\mathbb{F}}$  and $\phi_1=\begin{pmatrix}
    x_{11}  & x_{12}  \\
    x_{21}  & x_{22}  \\
    \end{pmatrix}.$
The action of this group on an arbitrary element of ${\mathcal E}({\bf O}_2, {\bf E}_1)$ is given by:
$${\phi}^{-1}{\bf M}_{\bf B}({\mathbb A}){\phi}^{(2)}= \begin{pmatrix}
0       & 0         & \frac{a_{23}x_{12}-a_{13}x_{22}}{x_{12}x_{21}-x_{11}x_{22}}    \\
0       & 0         & \frac{a_{13}x_{21}-a_{23}x_{11}}{x_{12}x_{21}-x_{11}x_{22}}    \\
a_{31}(x_{11}^2+x_{21}^2)  & a_{31}(x_{12}^2+x_{22}^2)   & 1         \\
\end{pmatrix},$$ 
where $x_{11}^2+x_{21}^2=x_{12}^2+x_{22}^2=\lambda.$

Now, we have to distinguish two cases:

\begin{itemize}
    \item  If $a_{23}^2 + a_{13}^2 \neq 0$, we obtain the family 
    $\begin{pmatrix}
0       & 0         & 0    \\
0       & 0         & 1    \\
\alpha  & \alpha    & 1         \\
\end{pmatrix}$ for $\alpha\in {\mathbb{F}}^*$.

    \item If $a_{23}^2 + a_{13}^2 = 0$, we obtain 
    $\begin{pmatrix}
0       & 0         & 1    \\
0       & 0         & {\bf i}    \\
1  & 1    & 1         \\
\end{pmatrix}$.
    
\end{itemize}

\subsubsection{Subcase $\Delta({\mathbb A})=({\bf E}_2, {\bf O}_1)$}

In this subcase, we are considering the algebras of the family:

$${\mathcal E}({\bf E}_2, {\bf O}_1)=\left\{{\mathbb A}:{\bf M}_{\bf B}({\mathbb A}) = \begin{pmatrix}
1       & 1         & a_{13}    \\
0       & 0         & a_{23}    \\
a_{31}  & a_{31}    & 0         \\
\end{pmatrix} \textrm{ and } \delta({\mathbb A})=(2, 1)\right\}.$$

The group ${\mathcal G}({\bf E}_2, {\bf O}_1)$ consists of maps $\phi$ such that:
    $$\phi = \begin{pmatrix}
    1  & 0 & 0 \\
    0  & \pm 1 & 0 \\
    0   & 0 & x_1 \\
    \end{pmatrix}, \textrm{ where } x_{1}\neq0$$
and the action of this group on an arbitrary element of ${\mathcal E}({\bf E}_2, {\bf O}_1)$ is given by:
$${\phi}^{-1}{\bf M}_{\bf B}({\mathbb A}){\phi}^{(2)}= \begin{pmatrix}
1       & 1         & a_{13} x_1^2    \\
0       & 0         & \pm a_{23} x_1^2    \\
{a_{31}}{x_1^{-1}}  & {a_{31}}{x_1^{-1}}     & 0         \\
\end{pmatrix}.$$

Here, we have two cases:

\begin{itemize}
    \item  If $a_{31} \neq 0$, we obtain the family 
    $\mathbb{W}_1^{\alpha,\beta}: \begin{pmatrix}
1       & 1         & \alpha    \\
0       & 0         & \beta    \\
1       & 1         & 0         \\
\end{pmatrix}$ for $\alpha, \beta\in {\mathbb{F}}$, where $\mathbb{W}^{\alpha,\beta}\cong \mathbb{W}^{\alpha,-\beta}.$

    \item If $a_{31} = 0$, we obtain 
    $\mathbb{W}_2^{\alpha}:\begin{pmatrix}
1       & 1         & \alpha    \\
0       & 0         & 1    \\
0       & 0    & 0         \\
\end{pmatrix}$ for $\alpha\in\mathbb{F}$, where $\mathbb{W}_2^{\alpha}\cong \mathbb{W}_2^{-\alpha}$.
    
\end{itemize}

\subsubsection{Subcase $\Delta({\mathbb A})=({\bf E}_2, {\bf E}_1)$}

In this subcase, we are considering the algebras of the family:

$${\mathcal E}({\bf E}_2, {\bf E}_1)=\left\{{\mathbb A}:{\bf M}_{\bf B}({\mathbb A}) = \begin{pmatrix}
1       & 1         & a_{13}    \\
0       & 0         & a_{23}    \\
a_{31}  & a_{31}    & 1         \\
\end{pmatrix} \textrm{ and } \delta({\mathbb A})=(2, 1)\right\}.$$

The group ${\mathcal G}({\bf E}_2, {\bf E}_1)$ consists of maps $\phi$ such that:
    $$\phi = \begin{pmatrix}
    1  & 0 & 0 \\
    0  & \pm 1 & 0 \\
    0   & 0 & 1 \\
    \end{pmatrix} $$
and the action of this group on an arbitrary element of ${\mathcal E}({\bf E}_2, {\bf E}_1)$ is given by:
$${\phi}^{-1}{\bf M}_{\bf B}({\mathbb A}){\phi}^{(2)}= \begin{pmatrix}
1       & 1         & a_{13}    \\
0       & 0         & \pm a_{23}    \\
a_{31}  & a_{31}    & 1         \\
\end{pmatrix}.$$

Clearly, we have a single family $\mathbb{W}_3^{\alpha, \beta, \gamma}:\begin{pmatrix}
1       & 1         & \beta    \\
0       & 0         & \gamma    \\
\alpha  & \alpha    & 1         \\
\end{pmatrix}$ for $\alpha,\beta,\gamma \in {\mathbb{F}}$, where $\mathbb{W}_3^{\alpha, \beta, \gamma}\cong \mathbb{W}_3^{\alpha,\beta,-\gamma}.$

\subsubsection{Subcase $\Delta({\mathbb A})=({\bf I}_2, {\bf O}_1)$}

In this subcase, we are considering the algebras of the family:

$${\mathcal E}({\bf I}_2, {\bf O}_1)=\left\{{\mathbb A}:{\bf M}_{\bf B}({\mathbb A}) = \begin{pmatrix}
1       & 1         & a_{13}    \\
{\bf i}       & {\bf i}         & a_{23}    \\
a_{31}  & a_{31}    & 0         \\
\end{pmatrix} \textrm{ and } \delta({\mathbb A})=(2, 1)\right\}.$$

The group ${\mathcal G}({\bf I}_2, {\bf O}_1)$ consists of maps $\phi$ such that:
    $$\phi = \begin{pmatrix}
    x_1   &  {\bf i} - {\bf i} x_1 & 0\\
    {\bf i}x_1-{\bf i} &  x_1  & 0\\
    0   & 0 & x_2 \\
    \end{pmatrix}, \textrm{ where } 2x_{1} \neq x_{2}$$
and the action of this group on an arbitrary element of ${\mathcal E}({\bf I}_2, {\bf O}_1)$ is given by:
$${\phi}^{-1}{\bf M}_{\bf B}({\mathbb A}){\phi}^{(2)}= \begin{pmatrix}
1       & 1         &  \frac{({\bf i}a_{23}(x_1-1)+a_{13}x_1)x_2^2}{2x_1-1}   \\
{\bf i}      & {\bf i}        & \frac{({\bf i}a_{13}( 1-x_1)+a_{23}x_1)x_2^2}{2x_1-1}    \\
\frac{a_{31}(2x_1-1)}{x_2}        &  \frac{a_{31}(2x_1-1)}{x_2}    & 0         \\
\end{pmatrix}.$$

From here, we have the cases:
\begin{itemize}
    \item If $ a_{31} \neq 0 $ and $a_{23}^2 + a_{13}^2 \neq 0$, we obtain the family $\begin{pmatrix}
1       & 1         & \alpha    \\
{\bf i}      & {\bf i}         & 0    \\
1  & 1    & 0         \\
\end{pmatrix}$ for $\alpha\in {\mathbb{F}}^*$.

    \item If $ a_{31} \neq 0 $ and $a_{23}^2 + a_{13}^2 = 0$, we have two algebras corresponding to  $\begin{pmatrix}
1       & 1         & 1    \\
{\bf i}       & {\bf i}         & \pm {\bf i}    \\
1  & 1    & 0         \\
\end{pmatrix}$.

    \item If $ a_{31} = 0 $ and $a_{23}^2 + a_{13}^2 \neq 0$, we have $\begin{pmatrix}
1       & 1         & 1    \\
{\bf i}       & {\bf i}         & 0    \\
0  & 0   & 0         \\
\end{pmatrix}$.

\item If $ a_{31} = 0 $ and $a_{23}^2 + a_{13}^2 = 0$, we obtain the algebra $\begin{pmatrix}
1       & 1         & 1    \\
{\bf i}       & {\bf i}         & -{\bf i}    \\
0  & 0   & 0         \\
\end{pmatrix}$.

\end{itemize}

\subsubsection{Subcase $\Delta({\mathbb A})=({\bf I}_2, {\bf E}_1)$}

In this subcase, we are considering the algebras of the family:

$${\mathcal E}({\bf I}_2, {\bf E}_1)=\left\{{\mathbb A}:{\bf M}_{\bf B}({\mathbb A}) = \begin{pmatrix}
1       & 1         & a_{13}    \\
{\bf i}       & {\bf i}         & a_{23}    \\
a_{31}  & a_{31}    & 1         \\
\end{pmatrix} \textrm{ and } \delta({\mathbb A})=(2, 1)\right\}.$$

The group ${\mathcal G}({\bf I}_2, {\bf E}_1)$ consists of maps $\phi$ such that:
    $$\phi =\begin{pmatrix}
    x_1   &  {\bf i} - {\bf i} x_1 & 0\\
    {\bf i}x_1-{\bf i} &  x_1  & 0\\
    0   & 0 & 1 \\
    \end{pmatrix} \textrm{ where } 2x_{1}\neq 1$$
and the action of this group on an arbitrary element of ${\mathcal E}({\bf I}_2, {\bf E}_1)$ is given by:
$${\phi}^{-1}{\bf M}_{\bf B}({\mathbb A}){\phi}^{(2)}= \begin{pmatrix}
1       & 1         &  \frac{{\bf i}a_{23}(x_1-1)+a_{13}x_1}{2x_1-1}   \\
{\bf i}       & {\bf i}         & \frac{{\bf i}a_{13}(1-x_1)+a_{23}x_1}{2x_1-1}    \\
{a_{31}(2x_1-1)}    &  {a_{31}(2x_1-1)}    & 1         \\
\end{pmatrix}.$$

From here, we have the cases:
\begin{itemize}
    \item If $ a_{31} \neq 0 $, we obtain the family $\begin{pmatrix}
1       & 1         & \alpha    \\
{\bf i}       & {\bf i}         & \beta    \\
1  & 1    & 1         \\
\end{pmatrix}$ for $\alpha, \beta\in {\mathbb{F}}$, where $(\alpha,\beta)\neq(1, {\bf i})$.

    \item If $ a_{31} = 0 $ and $a_{23}^2 + a_{13}^2 \neq 0$, we have the algebras $\begin{pmatrix}
1       & 1         & \alpha    \\
{\bf i}       & {\bf i}         & 0    \\
0  & 0   & 1         \\
\end{pmatrix}$ for $\alpha\in {\mathbb{F}}^{*}$.

\item If $ a_{31} = 0 $ and $a_{23}^2 + a_{13}^2 = 0$, we obtain the algebras 
$\begin{pmatrix}
1       & 1         & 1    \\
{\bf i}       & {\bf i}         & {\bf i}    \\
0  & 0   & 1         \\
\end{pmatrix}$ and $\begin{pmatrix}
1       & 1         & \alpha    \\
{\bf i}       & {\bf i}         & - \alpha {\bf i}    \\
0  & 0   & 1         \\
\end{pmatrix}$ for $\alpha\in {\mathbb{F}}$.

\end{itemize}

\subsection{Case \texorpdfstring{$\delta({\mathbb A})=(3)$}{Lg}}

By Theorem \ref{clasifd1}, ${\mathbb A}$ is either isomorphic to ${\bf E}_3$ or to ${\bf I}_3$.

\bigskip

We conclude this section with the following classification theorem.

\begin{theorem}

Given a three-dimensional non-degenerate evolution algebra $\mathbb A$ over an algebraically closed field of characteristic zero ${\mathbb F}$, then it is isomorphic to only one of the following algebras:
\begin{itemize}
    \item If $\delta({\mathbb A})=(1, 1, 1)$. 

    \begin{itemize}
        \item If $\Delta({\mathbb A})=({\bf O}_1, {\bf O}_1, {\bf O}_1)$. Then it is isomorphic to one of the following algebras:
        \begin{tasks}(2)
            \task ${\bf A}_{3,1}^{\alpha,\beta,\gamma}: \begin{pmatrix}
            0      & 1 & \alpha \\
            \beta & 0 & 1    \\
            1 & \gamma & 0    \\
            \end{pmatrix}$,

            \task ${\bf A}_{3,2}^{\alpha}: \begin{pmatrix}
            0      & 0 & 0 \\
            \alpha & 0 & 1    \\
            1 & 1 & 0    \\
            \end{pmatrix}$.      
        \end{tasks}

        \item If $\Delta({\mathbb A})=({\bf O}_1, {\bf O}_1, {\bf E}_1)$. Then it is isomorphic to one of the following algebras:

        \begin{tasks}(3)
             \task ${\bf A}_{3,3}^{\Lambda}: \begin{pmatrix}
                0      & \alpha & \beta\\
                \gamma & 0 & \epsilon    \\
                1 & 1 & 1    \\
                \end{pmatrix}$,

             \task ${\bf A}_{3,4}^{\alpha,\beta,\gamma}: \begin{pmatrix}
                0      & \alpha & \beta\\
                1 & 0 & \gamma    \\
                0 & 1 & 1    \\
                \end{pmatrix}$,

             \task ${\bf A}_{3,5}^{\alpha,\beta}: \begin{pmatrix}
                0      & 1 & \alpha\\
                1 & 0 & \beta    \\
                0 & 0 & 1    \\
                \end{pmatrix}$.
        \end{tasks}

        \item If $\Delta({\mathbb A})=({\bf O}_1, {\bf E}_1, {\bf E}_1)$. It is isomorphic to 
        ${\bf A}_{3,6}^{\Lambda}:\begin{pmatrix}
        0      & \alpha & \beta \\
        1 & 1 & \gamma    \\
        \epsilon & \zeta & 1    \\
        \end{pmatrix}$.

        \item If $\Delta({\mathbb A})=({\bf E}_1, {\bf E}_1, {\bf E}_1)$. It is isomorphic to 
${\bf A}_{3,7}^{\Lambda}:\begin{pmatrix}
1      & \alpha & \beta\\
\gamma & 1 & \epsilon    \\
\zeta  & \xi  & 1    \\
\end{pmatrix}$.

\end{itemize} 

    The only isomorphisms are between algebras in the same family, and they are permutations of the basis elements.
    
    \item If $\delta({\mathbb A})=(2, 1)$. 

    \begin{itemize}
        \item If $\Delta({\mathbb A})=({\bf O}_2, {\bf O}_1)$. Then it is isomorphic to one of the following algebras:
        \begin{tasks}(2)
            \task ${\bf B}_{3,1}: \begin{pmatrix}
            0       & 0         & 0    \\
            0       & 0         & 1    \\
            1  & 1    & 0         \\
            \end{pmatrix}$, 
    
            \task ${ \bf B}_{3,2}:\begin{pmatrix}
            0       & 0         & 1    \\
            0       & 0         & {\bf i}    \\
            1  & 1    & 0         \\
            \end{pmatrix}$. 
        \end{tasks}

        \item If $\Delta({\mathbb A})=({\bf O}_2, {\bf E}_1)$. Then it is isomorphic to one of the following algebras:

        \begin{tasks}(2)
                \task         ${ \bf B}_{3,3}^{\alpha}: \begin{pmatrix}
                0       & 0         & 0    \\
                0       & 0         & 1    \\
                \alpha  & \alpha    & 1         \\
                \end{pmatrix}$, 
                
                \task ${ \bf B}_{3,4}:\begin{pmatrix}
                0       & 0         & 1    \\
                0       & 0         & {\bf i}    \\
                1  & 1    & 1         \\
                \end{pmatrix}$. 
        \end{tasks}

        \item If $\Delta({\mathbb A})=({\bf E}_2, {\bf O}_1)$. Then it is isomorphic to one of the following algebras:

        \begin{tasks}(2)
            \task ${ \bf B}_{3,5}^{\alpha,\beta}: \begin{pmatrix}
            1       & 1         & \alpha    \\
            0       & 0         & \beta    \\
            1       & 1         & 0         \\
            \end{pmatrix}$,

            \task ${ \bf B}_{3,6}^{\alpha}:\begin{pmatrix}
            1       & 1         & \alpha    \\
            0       & 0         & 1    \\
            0       & 0    & 0         \\
            \end{pmatrix}$. 

        \end{tasks}

        The isomorphisms in this section are ${ \bf B}_{3,5}^{\alpha,\beta}\cong { \bf B}_{3,5}^{\alpha,-\beta}$ and ${ \bf B}_{3,6}^{\alpha}\cong { \bf B}_{3,6}^{-\alpha}$.

        \item If $\Delta({\mathbb A})=({\bf E}_2, {\bf E}_1)$. It is isomorphic to ${ \bf B}_{3,7}^{\alpha, \beta, \gamma}: \begin{pmatrix}
1       & 1         & \beta    \\
0       & 0         & \gamma    \\
\alpha  & \alpha    & 1         \\
\end{pmatrix}$. We have ${ \bf B}_{3,7}^{\alpha,\beta,\gamma}\cong { \bf B}_{3,7}^{\alpha,\beta,-\gamma}$.

        \item If $\Delta({\mathbb A})=({\bf I}_2, {\bf O}_1)$. Then it is isomorphic to one of the following algebras:

        \begin{tasks}(2)
        \task ${\bf B}_{3,8}^{\alpha}: \begin{pmatrix}
1       & 1         & \alpha    \\
{\bf i}       & {\bf i}         & 0    \\
1  & 1    & 0         \\
\end{pmatrix}$,

        \task ${\bf B}_{3,9}^{\pm}:\begin{pmatrix}
1       & 1         & 1    \\
{\bf i}       & {\bf i}         & \pm {\bf i}    \\
1  & 1    & 0         \\
\end{pmatrix}$,

        \task ${\bf B}_{3,10}:\begin{pmatrix}
1       & 1         & 1    \\
{\bf i}       & {\bf i}         & 0    \\
0  & 0   & 0         \\
\end{pmatrix}$,

        \task ${\bf B}_{3,11}:\begin{pmatrix}
1       & 1         & 1    \\
{\bf i}       & {\bf i}         & -{\bf i}    \\
0  & 0   & 0         \\
\end{pmatrix}$.
        \end{tasks}

        \item If $\Delta({\mathbb A})=({\bf I}_2, {\bf E}_1)$. Then it is isomorphic to one of the following algebras:

        \begin{tasks}(2)
        \task ${\bf B}_{3,12}^{\alpha,\beta}: \begin{pmatrix}
        1       & 1         & \alpha    \\
        {\bf i}       & {\bf i}         & \beta    \\
        1  & 1    & 1         \\
        \end{pmatrix}$, 

        \task ${\bf B}_{3,13}^{\alpha\neq0}:\begin{pmatrix}
1       & 1         & \alpha    \\
{\bf i}       & {\bf i}         & 0    \\
0  & 0   & 1         \\
\end{pmatrix}$,

        \task ${\bf B}_{3,14}:\begin{pmatrix}
1       & 1         & 1    \\
{\bf i}       & {\bf i}         & {\bf i}    \\
0  & 0   & 1         \\
\end{pmatrix}$,

        \task ${\bf B}_{3,15}^{\alpha}:\begin{pmatrix}
1       & 1         & \alpha    \\
{\bf i}       & {\bf i}         & - \alpha {\bf i}    \\
0  & 0   & 1         \\
\end{pmatrix}$
        \end{tasks}
    
    \end{itemize}

    \item If $\delta({\mathbb A})=(3)$. Then it is either isomorphic to ${\bf E}_3$ or to ${\bf I}_3$. 

\end{itemize}

The parameters in each family ${\bf A}_{3,*}^{*}$ or ${\bf B}_{3,*}^{*}$ are taken freely in $\mathbb F$, but in such a way that the resulting algebra has the $\delta$-index corresponding to that family.

\end{theorem}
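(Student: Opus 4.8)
The plan is to assemble the statement directly from the case analysis carried out in this section, organized by the invariants $\delta$ and $\Delta$, which by Corollary \ref{cor2} and Theorem \ref{theo2} are preserved under isomorphism. Since any isomorphism must match both invariants, the classification splits into disjoint blocks indexed by the admissible pairs $(\delta({\mathbb A}), \Delta({\mathbb A}))$, and it suffices to classify within each block separately.

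First I would enumerate the admissible $\delta$-indices for $n=3$, namely the partitions $(1,1,1)$, $(2,1)$ and $(3)$. For each partition I enumerate the admissible $\Delta$-traces: a $1$-dimensional block contributes ${\bf O}_1$ or ${\bf E}_1$ (there is no ${\bf I}_1$, since ${\bf I}_n$ exists only for $n\geq 2$ by Theorem \ref{clasifd1}), a $2$-dimensional block contributes ${\bf O}_2$, ${\bf E}_2$ or ${\bf I}_2$, and a single $3$-dimensional block is already resolved by Theorem \ref{clasifd1} into ${\bf E}_3$ or ${\bf I}_3$. Ordering the entries by $\geq_{*}$, this yields exactly the four traces listed for $(1,1,1)$, the six traces listed for $(2,1)$, and the two algebras for $(3)$.

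Next, for each fixed trace I invoke Theorem \ref{res} to put the structure matrix in the normalized block form, with the prescribed diagonal blocks $\overline{{\mathbb A}_i}$ and proportional columns, and then apply the action described in Remark \ref{groups}. Concretely, I first quotient by the diagonal group ${\mathcal G}_{\bf D}$, whose blocks ${\bf D}_i$ are the automorphisms of $\overline{{\mathbb A}_i}$ scaled by some $\lambda_i$ and are described explicitly by Theorems \ref{th20} and \ref{th21}; computing orbit representatives under this action is the bulk of each subsection and produces the normal forms displayed there. I then quotient by the permutation group ${\mathcal G}_{\bf P}$, which permutes equal diagonal blocks and acts by conjugation; this accounts for the residual identifications recorded in the statement, such as ${\bf B}_{3,5}^{\alpha,\beta}\cong {\bf B}_{3,5}^{\alpha,-\beta}$ and the symmetrization of ${\bf A}_{3,7}^{\Lambda}$ under $\mathbb S_3$.

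The main obstacle will be verifying completeness and irredundancy at once. Completeness requires that the orbit representatives exhaust each block and that every representative genuinely has the $\delta$-index of its family: a generic choice of parameters must not collapse two extending subspaces into one nor create a zero column, which is precisely why each family carries the side condition that $\delta({\mathbb A})$ equal that of the family. Irredundancy requires that no two listed algebras be isomorphic beyond the recorded identifications; distinct $(\delta,\Delta)$-blocks are automatically non-isomorphic by Corollary \ref{cor2} and Theorem \ref{theo2}, while within a single block the separation of the displayed orbit-families (for instance ${\bf A}_{3,1}^{\alpha,\beta,\gamma}$ versus ${\bf A}_{3,2}^{\alpha}$) and the exact residual identifications follow from the explicit orbit computation under ${\mathcal G}_{\bf D}$ followed by ${\mathcal G}_{\bf P}$. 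Since these actions are finite or one-parameter, the whole argument reduces to the finitely many case distinctions performed in the preceding subsections, and collecting the representatives from all blocks yields the asserted list.
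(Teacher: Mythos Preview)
Your proposal is correct and follows exactly the paper's approach: the theorem is the summary of the case-by-case analysis carried out in the preceding subsections, organized by the invariants $\delta$ and $\Delta$, normalized via Theorem \ref{res}, and reduced via the action of ${\mathcal G}_{\bf D}$ (using Theorems \ref{th20} and \ref{th21}) followed by ${\mathcal G}_{\bf P}$. The paper's proof is precisely this collection of explicit orbit computations, and your outline captures its structure faithfully.
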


\section{Derivations of non-degenerate evolution algebras}

Given a non-degenerate evolution algebra ${\mathbb A}$ such that $\delta({\mathbb A}) = (n_1,  \ldots, n_r)$ and $\Delta({\mathbb A}) := (\overline{{\mathbb A}_1},\ldots, \overline{{\mathbb A}_r})$. Fix a basis ${\bf B}:= \left\{e_1, \ldots, e_n\right\}$ as in Theorem \ref{res}, then for a derivation ${\mathfrak D}$ with matrix $(d_{ij})$ of ${\mathbb A}$ we have:
$${\mathfrak D}(e_i e_j ) = {\mathfrak D}(e_i)e_j +e_i {\mathfrak D}(e_j).$$
If $i\neq j$, then  $d_{ij}e_i^2 + d_{ji}e_j^2 = 0$. Therefore, $d_{ij}=-d_{ji}$ if $e_i, e_j \in {\mathbb A}_k$ for some $k$ or $d_{ij}=d_{ji} = 0 $ otherwise. From here, the matrix of ${\mathfrak D}$ is a block diagonal matrix with blocks $D_k$ for $1\leq k\leq r$ of size $n_{k}\times n_{k}$, where $D_k$ is a diagonal matrix plus an antisymmetric matrix.
Now, if $i=j$, then ${\mathfrak D}(e_i^2) = 2 e_i {\mathfrak D}(e_i)$ if and only if ${\mathfrak D} (e_i^2) = 2 d_{ii} e_i^2$ (note that $e_i^2$ is an eigenvector of ${\mathfrak D}$ corresponding to the eigenvalue $2 d_{ii}$). This is equivalent, for any $1\leq i,j \leq n$, to the equations:
$$\sum_{k} d_{jk}w_{ki} = 2 d_{ii}w_{ji},$$
where ${\bf M}_{\bf B}({\mathbb A}) := (w_{ki})$. 
Also, note that if $e_i, e_j \in {\mathbb A}_k$, then ${\mathfrak D} (e_i^2) = {\mathfrak D} (e_j^2)$,  $2 d_{ii} e_i^2 = 2 d_{jj} e_{j}^2$ and $d_{ii} = d_{jj}$. Let us assume in this section that the characteristic of the algebraically closed ground field $\mathbb{F}$ is not two.

\begin{theorem}
\label{deri}
    The map corresponding to the matrix $D_k$ is a derivation of $\overline{{\mathbb A}_k}$ for $1\leq k \leq r$.    
\end{theorem}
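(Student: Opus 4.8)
The plan is to verify the Leibniz identity for the map $D_k$ directly on a natural basis of $\overline{{\mathbb A}_k}$, exploiting the three structural facts already recorded before the statement: that the matrix $(d_{ij})$ of ${\mathfrak D}$ is block diagonal (so $d_{ij}=0$ whenever $e_i$ and $e_j$ lie in different blocks), that within a single block one has $d_{ij}=-d_{ji}$ for $i\neq j$, and the global relation $\sum_{k}d_{jk}w_{ki}=2d_{ii}w_{ji}$ coming from ${\mathfrak D}(e_i^2)=2d_{ii}e_i^2$. Let $I_k$ denote the set of indices of the basis vectors spanning ${\mathbb E}_k$. Since ${\bf B}$ is chosen as in Theorem \ref{res}, for $i,l\in I_k$ the scalar $w_{li}$ is exactly the $(l,i)$ entry of $\overline{{\mathbb A}_k}$, and moreover all columns inside the block coincide, so $\pi_{{\mathbb E}_k}(e_i^2)=\pi_{{\mathbb E}_k}(e_j^2)$ for all $i,j\in I_k$. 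It then suffices to check the identity $D_k(e_i\cdot_{{\mathbb A}_k}e_j)=D_k(e_i)\cdot_{{\mathbb A}_k}e_j+e_i\cdot_{{\mathbb A}_k}D_k(e_j)$ on pairs $e_i,e_j$ with $i,j\in I_k$.

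For the off-diagonal pairs $i\neq j$, the left-hand side vanishes because $e_i\cdot_{{\mathbb A}_k}e_j=0$ in an evolution algebra. Expanding the right-hand side via $D_k(e_i)=\sum_{l\in I_k}d_{li}e_l$ and using that $e_l\cdot_{{\mathbb A}_k}e_j=0$ unless $l=j$, I would reduce it to $d_{ji}\,\pi_{{\mathbb E}_k}(e_j^2)+d_{ij}\,\pi_{{\mathbb E}_k}(e_i^2)$; the antisymmetry $d_{ij}=-d_{ji}$ together with the equality of columns $\pi_{{\mathbb E}_k}(e_i^2)=\pi_{{\mathbb E}_k}(e_j^2)$ makes this sum zero, matching the left-hand side.

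For the diagonal case $i=j$, the right-hand side equals $2\,e_i\cdot_{{\mathbb A}_k}D_k(e_i)=2d_{ii}\,\pi_{{\mathbb E}_k}(e_i^2)$, so the claim becomes $D_k\big(\pi_{{\mathbb E}_k}(e_i^2)\big)=2d_{ii}\,\pi_{{\mathbb E}_k}(e_i^2)$. Writing $\pi_{{\mathbb E}_k}(e_i^2)=\sum_{l\in I_k}w_{li}e_l$ and applying $D_k$, this amounts to proving $\sum_{l\in I_k}d_{ml}w_{li}=2d_{ii}w_{mi}$ for every $m\in I_k$. I would obtain this by specializing the global relation $\sum_{l}d_{ml}w_{li}=2d_{ii}w_{mi}$ (taking $j=m$) and observing that, since ${\mathfrak D}$ is block diagonal, $d_{ml}=0$ for $l\notin I_k$; the sum therefore collapses to indices in $I_k$, and the surviving $w_{li}$ are precisely the entries of $\overline{{\mathbb A}_k}$.

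The argument is thus largely a matter of correctly bookkeeping the projection $\pi_{{\mathbb E}_k}$ and transferring the two global facts to a single block. The one step I expect to be the genuine crux is ensuring that the global derivation equation restricts cleanly to $I_k$: this is exactly where block-diagonality of ${\mathfrak D}$ is indispensable, and where the normalization of the basis from Theorem \ref{res} (equal columns within each block, diagonal blocks equal to $\overline{{\mathbb A}_k}$) is what guarantees that the restricted relation is genuinely the derivation condition for $\overline{{\mathbb A}_k}$ rather than for some product twisted by contributions from outside the block.
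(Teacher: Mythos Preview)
Your proposal is correct and follows essentially the same approach as the paper. The paper compresses your diagonal case into a single block-matrix identity ${\mathfrak D}\,{\bf M}_{\bf B}({\mathbb A})=2\,\mathrm{diag}(d_{ii})\,{\bf M}_{\bf B}({\mathbb A})$ and reads off $D_k\overline{{\mathbb A}_k}=2d_{kk}\overline{{\mathbb A}_k}$ from the $k$-th diagonal block, while your argument unpacks this same restriction coordinate by coordinate; the off-diagonal case via antisymmetry and equal columns is identical in spirit, just left implicit in the paper.
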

\begin{proof}

It follows by observing that since ${\mathfrak D}$ is block diagonal, then ${\mathfrak D}(e_i^2) = 2 e_i {\mathfrak D}(e_i)$ can be written 
$$\begin{pmatrix}
D_1 \overline{{\mathbb A}_1} & D_1 {\mathbb A}_{12} & \ldots  &  D_1 {\mathbb A}_{1r} \\
D_2 {\mathbb A}_{21} & D_2\overline{{\mathbb A}_2} &  \ldots  &  D_2{\mathbb A}_{2r} \\
\vdots &  \vdots & \ddots &  \vdots\\
D_r{\mathbb A}_{r1} & D_r{\mathbb A}_{r2} & \ldots  & D_r\overline{{\mathbb A}_r} \\
\end{pmatrix} = 2 \begin{pmatrix}
d_{11}\overline{{\mathbb A}_1} & d_{22}{\mathbb A}_{12} & \ldots  &  d_{rr}{\mathbb A}_{1r} \\
d_{11}{\mathbb A}_{21} & d_{22}\overline{{\mathbb A}_2} &  \ldots  &  d_{rr}{\mathbb A}_{2r} \\
\vdots &  \vdots & \ddots &  \vdots\\
d_{11}{\mathbb A}_{r1} & d_{22}{\mathbb A}_{r2} & \ldots  & d_{rr}\overline{{\mathbb A}_r} \\
\end{pmatrix},$$
where ${\bf M}_{\bf B}({\mathbb A}) = \begin{pmatrix}
\overline{{\mathbb A}_1} &  \ldots  &  {\mathbb A}_{1r} \\
\vdots  & \ddots &  \vdots\\
{\mathbb A}_{r1}  & \ldots  & \overline{{\mathbb A}_r} \\
\end{pmatrix}$. Hence, we have $D_{k} \overline{{\mathbb A}_k} = 2 d_{kk}\overline{{\mathbb A}_k}$, so $D_{k}$ is a derivation of $\overline{{\mathbb A}_k}$.
\end{proof}

    Observe  that if $\mathbb{A}$ is the direct sum of the algebras $\overline{{\mathbb A}_k}$, then the converse of Theorem \ref{deri} is true. However, in general, it is not true, as we need the additional condition of $e_i^2$ being an eigenvector of the derivation corresponding to the eigenvalue $2 d_{ii}$ for every $1 \leq i\leq {\rm dim}(\mathbb{A})$.

Now, by the previous result, we are interested in the derivations of ${\bf E}_n$ and ${\bf I}_n$ (for ${\bf O}_n$ the result is trivial).

\begin{lemma}
\label{deren}
    The derivations ${\mathfrak D}$ of ${\bf E}_n$ are of the form ${\mathfrak D} = \begin{pmatrix}
0 &  0 \\
0 &  D_0 \\
\end{pmatrix}$, where $D_0 = - D_0^{\rm T} = (d_{ij})_{i,j}^{n-1}$. Moreover, $\dim \mathfrak{Der} ({\bf E}_n) = \frac{(n-1)(n-2)}{2}$.
    
    
\end{lemma}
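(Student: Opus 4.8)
The plan is to specialize the general constraints on a derivation matrix, obtained in the paragraph preceding the lemma, to the concrete structure matrix of ${\bf E}_n$. Recall that ${\bf E}_n$ is given by $e_1^2=\cdots=e_n^2=e_1$, so its structure constants are $w_{ki}=\delta_{k1}$ (every column equals $e_1$), and $\dim({\bf E}_n^2)=1$ forces $\delta({\bf E}_n)=(n)$, i.e.\ all basis vectors lie in a single block. Hence the general analysis already tells us that the matrix $(d_{ij})$ of a derivation $\mathfrak{D}$ is a diagonal matrix plus an antisymmetric one; that is, $d_{ij}=-d_{ji}$ whenever $i\neq j$. It then remains to pin down the diagonal entries and to detect which off-diagonal entries are forced to vanish.

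First I would substitute $w_{ki}=\delta_{k1}$ into the equations $\sum_k d_{jk}w_{ki}=2d_{ii}w_{ji}$, which collapse to $d_{j1}=2d_{ii}\,\delta_{j1}$ for all $i,j$. Taking $j=1$ yields $d_{11}=2d_{ii}$ for every $i$; in particular (with $i=1$) $d_{11}=2d_{11}$, so $d_{11}=0$ and therefore $d_{ii}=0$ for all $i$. Taking $j\neq 1$ yields $d_{j1}=0$, and combining with the antisymmetry $d_{1j}=-d_{j1}$ gives $d_{1j}=0$ as well. Thus the entire first row and first column of $(d_{ij})$ vanish, and the surviving block $D_0:=(d_{ij})_{2\le i,j\le n}$ is antisymmetric, producing exactly the claimed shape $\mathfrak{D}=\left(\begin{smallmatrix}0&0\\0&D_0\end{smallmatrix}\right)$ with $D_0=-D_0^{\rm T}$.

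For the converse I would verify that any such matrix is indeed a derivation: the off-diagonal identities reduce to $(d_{ij}+d_{ji})e_1=0$, which holds by antisymmetry (and trivially whenever an index equals $1$), while the diagonal identities $\mathfrak{D}(e_i^2)=2e_i\mathfrak{D}(e_i)$ reduce to $\mathfrak{D}(e_1)=2d_{ii}e_1=0$, valid since the first column vanishes and $d_{ii}=0$. The dimension count is then immediate, as the space of antisymmetric $(n-1)\times(n-1)$ matrices has dimension $\binom{n-1}{2}=\frac{(n-1)(n-2)}{2}$. There is no genuine obstacle here; the only points demanding care are the index bookkeeping in the displayed equations and remembering to check sufficiency, since the general discussion supplies only necessary conditions.
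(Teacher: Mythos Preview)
Your proof is correct and follows essentially the same route as the paper's: both obtain the antisymmetry $d_{ij}=-d_{ji}$ from ${\mathfrak D}(e_ie_j)=0$, then specialize ${\mathfrak D}(e_i^2)=2{\mathfrak D}(e_i)e_i$ to the structure of ${\bf E}_n$ to force $d_{11}=2d_{ii}$ (hence all diagonal entries vanish) and $d_{k1}=0$. Your version is a bit more explicit---you phrase everything through the structure-constant equations $\sum_k d_{jk}w_{ki}=2d_{ii}w_{ji}$ and you additionally verify sufficiency, which the paper's proof leaves implicit---but the underlying argument is identical.
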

\begin{proof}

Denote the matrix of a derivation ${\mathfrak D}$ as $(d_{ij})$ in a convenient basis. Since, for $i\neq j$, we have ${\mathfrak D}(e_i e_j) = {\mathfrak D}(e_i)e_j + {\mathfrak D}(e_j)e_i = 0$, then $d_{ij}=-d_{ji}$. Also, ${\mathfrak D}(e_i^2) = 2{\mathfrak D}(e_i)e_i$ implies that ${\mathfrak D}(e_1) = 2 d_{ii}e_1$, so $d_{ii}=d_{jj}$. At last, the equation $\sum_{k}d_{k1}e_{k} = 2 d_{11}e_1$ implies that $d_{k1}=0$ for $1 \leq k\leq n$.

\end{proof}

\begin{lemma}
\label{derin}
    The derivations ${\mathfrak D}$ of ${\bf I}_n$ are of the form:
$$D = \begin{pmatrix}
d & - d {\bf i}  & -   d_{23} {\bf i}  &\ldots   & -   d_{2 (n-1)} {\bf i}  & -   d_{2 n} {\bf i}   \\
  d{\bf i}  & d & d_{23} &\ldots  & d_{2 (n-1)} & d_{2 n}  \\
  d_{23} {\bf i}  & -d_{23} & d &\ldots  & d_{3 (n-1)} & d_{3 n}  \\
\vdots & \vdots &\vdots &  \ddots & \vdots &\vdots \\
  d_{2 (n-1)}  {\bf i}  & -d_{2 (n-1)} & -d_{3 (n-1)} & \ldots  &d & d_{(n-1) n} \\
  d_{2 n}  {\bf i}  & -d_{2 n} & -d_{3 n} & \ldots  &-d_{(n-1) n} & d \\
\end{pmatrix}$$
    Moreover, $\dim \mathfrak{Der} ({\bf I}_n) = \frac{(n-1)(n-2)}{2} + 1$.
\end{lemma}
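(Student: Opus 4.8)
The plan is to follow the template of Lemma \ref{deren}, now using the structure constants of ${\bf I}_n$, namely $e_i^2=e_1+{\bf i}e_2$ for every $i$, so that $w_{1i}=1$, $w_{2i}={\bf i}$ and $w_{ki}=0$ for $k\geq 3$ and all $i$. Write the matrix of ${\mathfrak D}$ as $(d_{ij})$ with ${\mathfrak D}(e_j)=\sum_i d_{ij}e_i$. By the general analysis preceding Theorem \ref{deri}, applied to the single block ${\bf I}_n$, I already have two facts for free: evaluating ${\mathfrak D}$ on $e_ie_j=0$ with $i\neq j$ forces $d_{ij}=-d_{ji}$, so the off-diagonal part is antisymmetric; and since all the $e_i$ share the same square, the diagonal is constant, $d_{ii}=d$ for a common scalar $d$. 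Thus ${\mathfrak D}=d\,{\bf{1}}_{n}+A$ with $A$ antisymmetric of zero diagonal, and it remains only to impose the condition coming from the squares.

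The key step is to substitute the structure constants into the eigenvector equation $\sum_k d_{jk}w_{ki}=2d_{ii}w_{ji}$. Because $w_{ki}=0$ for $k\geq 3$, the left-hand side collapses to $d_{j1}+{\bf i}\,d_{j2}$, independently of $i$, while the right-hand side equals $2d\,w_{ji}$. For $j=1$ and $j=2$ this yields $d_{12}=-{\bf i}d$ and $d_{21}={\bf i}d$ (the latter consistent with antisymmetry), which is the $2\times 2$ top-left corner of the displayed matrix. For $j\geq 3$ it yields the single relation $d_{j1}=-{\bf i}\,d_{j2}$; combined with antisymmetry, this expresses the entire coupling between the index set $\{1,2\}$ and each index $k\geq 3$ through one free parameter, which I name $d_{2k}$, reproducing the first two rows and columns of the matrix. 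The remaining entries $d_{ij}$ with $3\leq i<j\leq n$ are unconstrained apart from antisymmetry, giving the antisymmetric lower-right block.

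For the dimension I would tally the free parameters: the scalar $d$ gives $1$; the relations for $j\geq 3$ leave one free parameter $d_{2k}$ for each $k\in\{3,\ldots,n\}$, giving $n-2$; and the antisymmetric block on the indices $\{3,\ldots,n\}$ gives $\binom{n-2}{2}=\frac{(n-2)(n-3)}{2}$. Summing, $1+(n-2)+\frac{(n-2)(n-3)}{2}=\frac{(n-1)(n-2)}{2}+1$, as claimed. Finally, the converse is a direct check: any matrix of the displayed shape is antisymmetric off the diagonal, has constant diagonal $d$, and satisfies $\sum_k d_{jk}w_{ki}=2d\,w_{ji}$ by construction, hence is a derivation.

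I do not expect a genuine obstacle, since the computation is routine once the structure constants are inserted; the one point demanding care is the eigenvector condition. Because $e_i^2=e_1+{\bf i}e_2$ lies in $\langle e_1,e_2\rangle$, this condition constrains only columns $1$ and $2$, and for each $k\geq 3$ it reduces the two antisymmetric degrees of freedom $d_{1k},d_{2k}$ to a single one. Keeping that reduction straight, together with the elementary but easy-to-misplace identity ${\bf i}^{-1}=-{\bf i}$ used when solving for $d_{12}$, is exactly what makes the count come out to $\frac{(n-1)(n-2)}{2}+1$ rather than the $\binom{n}{2}+1$ one would get from an unconstrained constant-diagonal-plus-antisymmetric matrix.
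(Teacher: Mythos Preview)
Your proof is correct and follows essentially the same approach as the paper's: both impose antisymmetry from ${\mathfrak D}(e_ie_j)=0$, deduce the constant diagonal from the common square $e_i^2=e_1+{\bf i}e_2$, and then read off the relations $d_{12}=-{\bf i}d$, $d_{21}={\bf i}d$, and $d_{k1}=-{\bf i}d_{k2}$ for $k\geq 3$ from the eigenvector condition. Your write-up is slightly more complete in that you carry out the dimension count and note the converse verification explicitly, whereas the paper leaves these implicit.
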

\begin{proof}

Denote the matrix of a derivation ${\mathfrak D}$ as $(d_{ij})$ in a convenient basis. Again, for $i\neq j$, then $d_{ij}=-d_{ji}$. Since ${\mathfrak D}(e_i^2)=2{\mathfrak D}(e_i)e_i$, we have ${\mathfrak D}(e_1 + {\bf i} e_2) = 2 d_{ii}(e_1 + e_2)$. Hence,  
\begin{center}
    $2d_{jj}e_1 + 2 d_{jj} {\bf i}e_2 = 2 d_{ii}e_1 + 2 d_{ii} {\bf i} e_2$, \ so $d:=d_{ii}=d_{jj} $.
\end{center}
\noindent Moreover, we can write 
$$\sum_{k}(d_{k1} +  d_{k2}{\bf i})e_k = 2 d_{ii}e_1 + 2 d_{ii}{\bf i}e_2.$$
For $k=1$, we have $d_{11} +  d_{12} {\bf i} = 2 d_{ii}$, thus, 
$d_{12} = - d{\bf i}$. For $k = 2$, we have $d_{21} +  d_{22} {\bf i} = 2d_{ii}{\bf i}$, so $d_{21}=d{\bf i}$. Finally, for $k>2$, we have  $d_{k1}=-d_{k2}{\bf i}$.
\end{proof}

By the previous results, every derivation of $\bf E_{n}$ or $\bf I_{n}$ is singular. However, there are examples of non-degenerate evolution algebras with non-singular derivations, such as 
\begin{center}
${\mathbb A}:e_1^2 = e_3,\, e_2^2 = e_3, \, e_3^2 = e_1 + {\bf i} e_2$.
\end{center}
Hence, we have the following corollary.

\begin{corollary}
    Given a non-degenerate evolution algebra with a non-singular derivation, then it has $\Delta$-trace equal to $({\bf O}_{n_1}, \ldots, {\bf O}_{n_r})$, for some ${n_1}, \ldots, {n_r}\in {\mathbb N}$.
\end{corollary}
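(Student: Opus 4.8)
The plan is to combine the block-diagonal shape of derivations recorded just before Theorem~\ref{deri} with the fact, already observed in the text, that every derivation of ${\bf E}_n$ and of ${\bf I}_n$ is singular. The whole argument reduces to a single determinant computation.

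First I would fix a natural basis ${\bf B}$ as in Theorem~\ref{res}, so that ${\bf M}_{\bf B}({\mathbb A})$ is block diagonal with diagonal blocks $\overline{{\mathbb A}_1}, \ldots, \overline{{\mathbb A}_r}$, where each $\overline{{\mathbb A}_k} \in \{{\bf O}_{n_k}, {\bf E}_{n_k}, {\bf I}_{n_k}\}$. As recalled in the paragraph preceding Theorem~\ref{deri}, with respect to this \emph{same} basis the matrix of any derivation ${\mathfrak D}$ is block diagonal too, with blocks $D_1, \ldots, D_r$ of sizes $n_k \times n_k$; hence $\det {\mathfrak D} = \prod_{k=1}^{r} \det D_k$. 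Now I would assume ${\mathfrak D}$ is non-singular. Then $\det {\mathfrak D} \neq 0$, and the product formula forces $\det D_k \neq 0$ for every $k$, that is, each block $D_k$ is non-singular. At the same time, Theorem~\ref{deri} tells us that $D_k$ is a derivation of $\overline{{\mathbb A}_k}$.

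The decisive step is then to exclude $\overline{{\mathbb A}_k} \in \{{\bf E}_{n_k}, {\bf I}_{n_k}\}$. By Lemma~\ref{deren} every derivation of ${\bf E}_{n_k}$ has a vanishing first row, hence is singular; by Lemma~\ref{derin} every derivation of ${\bf I}_{n_k}$ has its second row equal to ${\bf i}$ times its first row, so it is singular as well. Since $D_k$ is non-singular, $\overline{{\mathbb A}_k}$ can be neither ${\bf E}_{n_k}$ nor ${\bf I}_{n_k}$, and the only remaining possibility is $\overline{{\mathbb A}_k} = {\bf O}_{n_k}$. Running this over all $k$ yields $\Delta({\mathbb A}) = ({\bf O}_{n_1}, \ldots, {\bf O}_{n_r})$, as desired. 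I do not anticipate a genuine obstacle: the only point needing a line of justification is the singularity of the ${\bf I}_{n_k}$-derivations, which is immediate from the explicit shape in Lemma~\ref{derin} once one notices the proportionality of the first two rows.
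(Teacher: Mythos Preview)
Your argument is correct and matches the paper's (implicit) reasoning: block-diagonality of the derivation matrix together with Theorem~\ref{deri} and Lemmas~\ref{deren}--\ref{derin} is exactly how the corollary is meant to follow. One small inaccuracy to fix: the structure matrix ${\bf M}_{\bf B}({\mathbb A})$ from Theorem~\ref{res} is \emph{not} block diagonal in general (the off-diagonal blocks ${\mathbb A}_{ij}$ need not vanish); only its diagonal blocks are the $\overline{{\mathbb A}_k}$, and it is the \emph{derivation} matrix that is block diagonal---but your proof only uses the latter fact, so nothing is affected.
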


The well-known fact that every derivation of a perfect evolution algebra is zero can be generalized in the following way. This result was first proven in a recent paper \cite{yolder}, in order to enrich the evolution algebras literature we present a different, but interesting, approach to it.

\begin{theorem}
Suppose $\textrm{char}\, \mathbb{F}$ is not a Mersenne prime. Given an evolution algebra ${\mathbb A}$ with $\delta({\mathbb A}) = (1,  \ldots, 1)$, then every derivation is zero. 
\end{theorem}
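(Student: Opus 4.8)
The plan is to show that any derivation ${\mathfrak D}$ of such an ${\mathbb A}$ vanishes, by first forcing its matrix to be diagonal and then running a ``doubling'' argument along the graph associated to ${\mathbb A}$. First I would fix a natural basis ${\bf B}=\{e_1,\ldots,e_n\}$ as in Theorem \ref{res}. Since $\delta({\mathbb A})=(1,\ldots,1)$, every diagonal block $\overline{{\mathbb A}_i}$ is one-dimensional, and the columns $v_i=e_i^2$ of the structure matrix are pairwise linearly independent (indeed $\dim(\mathbb{F}v_i+\mathbb{F}v_j)=2$ for $i\neq j$). Invoking the computation made just before Theorem \ref{deri}: writing ${\mathfrak D}=(d_{ij})$, the identity ${\mathfrak D}(e_ie_j)={\mathfrak D}(e_i)e_j+e_i{\mathfrak D}(e_j)$ gives $d_{ij}e_i^2+d_{ji}e_j^2=0$ for $i\neq j$. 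As $e_i^2$ and $e_j^2$ are linearly independent, this forces $d_{ij}=d_{ji}=0$, so ${\mathfrak D}$ is diagonal; write $d_i:=d_{ii}$.

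Next I would exploit the surviving relations. The equations ${\mathfrak D}(e_i^2)=2e_i{\mathfrak D}(e_i)=2d_ie_i^2$ say that each $e_i^2$ is an eigenvector of ${\mathfrak D}$ for the eigenvalue $2d_i$. Expanding $e_i^2=\sum_k w_{ki}e_k$ and using ${\mathfrak D}(e_k)=d_ke_k$ yields $w_{ki}(d_k-2d_i)=0$ for all $k,i$. In the language of the weighted graph associated to ${\mathbb A}$ this reads: along every directed edge $i\to k$ (that is, whenever $e_k$ occurs in $e_i^2$), the derivation value doubles, $d_k=2d_i$. Non-degeneracy, equivalently the absence of zero columns in the structure matrix, guarantees that every vertex has out-degree at least one.

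Then I would argue by contradiction. Suppose ${\mathfrak D}\neq 0$, so $d_{i_0}\neq 0$ for some vertex $i_0$; since $\mathrm{char}\,\mathbb{F}\neq 2$ (assumed in this section), doubling preserves non-vanishing, so following out-edges remains inside $\{i:d_i\neq 0\}$. Because the vertex set is finite and all out-degrees are positive, this forward walk must close up into a directed cycle of some length $m\geq 1$ on which every value is nonzero. Multiplying the edge relations $d_k=2d_i$ around the cycle gives $(2^m-1)d_j=0$ for a vertex $j$ on it, whence $2^m-1=0$ in $\mathbb{F}$; that is, $\mathrm{char}\,\mathbb{F}$ divides the Mersenne number $2^m-1$.

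The main obstacle is precisely this last, number-theoretic step: excluding $2^m-1=0$ in $\mathbb{F}$. The remedy is to reduce to a minimal (simple) cycle, so that $m$ is the multiplicative order of $2$ modulo $\mathrm{char}\,\mathbb{F}$; in the extremal configuration the vanishing of $2^m-1$ pins $\mathrm{char}\,\mathbb{F}$ to a prime of the form $2^m-1$, and the hypothesis that $\mathrm{char}\,\mathbb{F}$ is not a Mersenne prime is exactly what rules this out. This contradiction forces ${\mathfrak D}=0$. I expect the careful bookkeeping of cycle length versus $\mathrm{ord}_{\mathrm{char}\,\mathbb{F}}(2)$ to be the delicate point, while the reduction to a diagonal matrix and the doubling relation are routine consequences of the earlier results.
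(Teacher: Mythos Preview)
Your approach is essentially the same as the paper's: reduce to a diagonal matrix, derive the doubling relation $d_k=2d_i$ along each edge $i\to k$, and then chase a directed walk until it closes into a cycle. The paper organizes the walk via a single out-neighbor function $k(i)$ and iterates $d_{k^s(i)}=2^s d_i$; you phrase it graph-theoretically, but the content is identical.

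You are right that the number-theoretic step is the delicate one, and in fact your proposed remedy does not work---nor does the paper's own assertion at that point. From a cycle of length $m$ you get $(2^m-1)d_j=0$, hence $\mathrm{char}\,\mathbb{F}\mid 2^m-1$. But there is no mechanism forcing the cycle length $m$ to equal $\mathrm{ord}_p(2)$: $m$ is dictated by the graph of ${\mathbb A}$, not by arithmetic, and ``take a minimal simple cycle'' only bounds $m$ by the graph's girth, not by $\mathrm{ord}_p(2)$. Concretely, take $p=23$ (not a Mersenne prime) and the $11$-dimensional algebra $e_i^2=e_{i+1}$ (indices mod $11$); its columns are distinct basis vectors, so $\delta({\mathbb A})=(1,\ldots,1)$, yet $\mathfrak{D}(e_i)=2^{i-1}e_i$ is a nonzero derivation because $2^{11}\equiv 1\pmod{23}$. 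So the inference ``$2^m-1=0$ in $\mathbb{F}$ forces $\mathrm{char}\,\mathbb{F}$ to be a Mersenne prime'' is simply false, and the hypothesis as stated is not strong enough to close the argument. Your instinct that this is where the proof is fragile is correct; the honest conclusion is that the correct hypothesis should be that $\mathrm{char}\,\mathbb{F}$ does not divide $2^m-1$ for any $m\le n$ (equivalently, $\mathrm{ord}_{\mathrm{char}\,\mathbb{F}}(2)>n$), not merely that $\mathrm{char}\,\mathbb{F}$ is not a Mersenne prime.
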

\begin{proof}

Suppose $\Delta({\mathbb A}) = ({\bf O}_1,  \ldots, {\bf O}_1, {\bf E}_1,  \ldots, {\bf E}_1)$, where we have $n_1$ copies of ${\bf O}_1$ and $n_2$ copies of ${\bf E}_1$ for 
$0 \leq n_1, n_2 \leq n$  such that $n_1 + n_2 = n$. Fix a basis ${\bf B}:= \left\{e_1, \ldots, e_n\right\}$ as in Theorem \ref{res} and denote the structure matrix of  $\mathbb A$ as $(w_{ki})$. Note that for every $1\leq i\leq n$ there is some $k(i)$ such that $w_{k(i) i}\neq 0$. Choose any map $k:\left\{1, \ldots, n\right\}\rightarrow \left\{1, \ldots, n\right\}$ satisfying that $w_{k(i)i}\neq0$ for every $1\leq i\leq n$. It is clear that since $\mathbb A$ is non-degenerate, such a map exists.

Now, by Theorem \ref{deri}, a derivation $\mathfrak D$ of ${\mathbb A}$ has a diagonal matrix with diagonal $(d_1, \ldots, d_{n_1}, 0, \ldots, 0)$ for some $d_i\in {\mathbb{F}}$. Then, the equation $\sum_{k} d_{jk}w_{ki} = 2 d_{ii}w_{ji}$ is equivalent to $d_{j}w_{ji} = 2 d_{i}w_{ji}$ and using the map $k$ we can write $d_{k^{s}(i)} = 2^{s}d_{i}$, where $k^{s}$ denotes applying the map $k$ for $s$ times.
Now, for any $i\leq n_1$ three scenarios are possible:
\begin{enumerate}
    \item If $n_1 < k^{s}(i)$ for some $s$, then $d_{k^{s}(i)} = 0$ and $d_{i}=0$.

    \item If $k^{s}(i) = i$ for some $s$, then $d_{i} = 2^{s} d_{i}$ and we have $d_{i}=0$, since $\textrm{char}\, \mathbb{F}$ is not a Mersenne prime.

    \item If $k^{s}(i)\neq i$ and $k^{s}(i) \leq n_1 $ for any $s$, then there exist some $s\neq 0$ and some $p\neq0$ such that $k^s(i) = k^{s+p}(i)$ and we have that $d_{k^{s}(i)} = 2^{p}d_{k^{s}(i)}$. Therefore, $d_{k^{s}(i)}=0$ and $d_{i}=0$.
\end{enumerate}

Hence, we conclude that $d_1 = \ldots  = d_{n_1} = 0$.
\end{proof}

The previous result is not true if the characteristic is a Mersenne prime.

\begin{example}
    Suppose $p=2^{n}-1$ is a Mersenne prime. Consider the $n$-dimensional evolution algebra given by $e_{i}^{2}=e_{i+1}$ for $i< n$ and $e_{n}^2=e_1$. We show that the homomorphism $\mathfrak D$ defined by $\mathfrak D(e_{i}) = 2^{i-1}e_i$ for $1\leq i\leq n$ is a derivation. If $i\neq j$, then 
$ 0 = {\mathfrak D}(e_{i}e_{j}) = {\mathfrak D}(e_{i})e_{j} + e_{i}{\mathfrak D}(e_{j})=0$. Now, if $i<n$, then $${\mathfrak D}(e_{i}^2) = {\mathfrak D}(e_{i+1})=2^{i}e_{i+1} = 2 (2^{i-1}e_{i}^2)= 2 {\mathfrak D}(e_{i})e_{i}.$$
If $i=n$, then 
$${\mathfrak D}(e_{n}^2) = {\mathfrak D}(e_{1})= e_{1} = e_{n}^2 = 
2^{n}e_n^2 = 2(2^{n-1}e_{n}^2) = 2 {\mathfrak D}(e_{n})e_{n}.$$
So ${\mathfrak D}$ is a derivation.
\end{example}

\section{The variety of evolution algebras with (one or zero)-dimensional square }

Although it is known that the class of evolution algebras is not a variety, the study of some varieties contained in it is still an interesting topic. In this section, we study the class of complex evolution algebras with (one or zero)-dimensional square, and we prove that it is an irreducible variety.

\begin{theorem}
Any complex $n$-dimensional commutative algebra $\mathbb A$ with $\dim {\mathbb A}^2 = 1$ is an evolution algebra. 
\end{theorem}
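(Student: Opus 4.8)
The plan is to recognise the multiplication as a single symmetric bilinear form valued in the line $\mathbb{A}^2$, and then to diagonalise that form. First I would fix a nonzero vector $c$ spanning the one-dimensional space $\mathbb{A}^2$. Since every product of elements of $\mathbb{A}$ lies in $\mathbb{A}^2 = \mathbb{F}c$, there is a well-defined map $B\colon \mathbb{A} \times \mathbb{A} \to \mathbb{F}$ determined by $xy = B(x,y)\,c$. Bilinearity of the algebra product makes $B$ bilinear, and commutativity of $\mathbb{A}$ makes $B$ symmetric.

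The key observation is that a basis $\{e_1,\dots,e_n\}$ is a natural basis of $\mathbb{A}$ (that is, $e_i e_j = 0$ for $i \neq j$) precisely when it is orthogonal for $B$, since $e_i e_j = B(e_i,e_j)\,c$. Thus the whole statement reduces to the classical fact that a symmetric bilinear form over a field of characteristic different from $2$ admits an orthogonal basis; here the field is $\mathbb{C}$, so this applies. I would include the short inductive proof for completeness: if $B = 0$ any basis works; otherwise polarisation, $2B(u,v) = B(u+v,u+v) - B(u,u) - B(v,v)$, shows that $B$ cannot vanish identically on the diagonal, so there is a vector $v$ with $B(v,v) \neq 0$. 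Then $\mathbb{A} = \mathbb{F}v \oplus v^{\perp}$, where $v^{\perp} = \{x : B(v,x) = 0\}$ has dimension $n-1$ because $B(v,\cdot)$ is a nonzero functional, and one induces on $\dim v^{\perp}$ to obtain an orthogonal basis of $v^{\perp}$, which together with $v$ gives an orthogonal basis of $\mathbb{A}$.

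Taking such an orthogonal basis $\{e_1,\dots,e_n\}$ yields $e_i e_j = 0$ for $i \neq j$, so this is a natural basis and $\mathbb{A}$ is an evolution algebra. The only genuine subtlety is the existence of the orthogonal basis, and the single place where the hypotheses matter is the polarisation step, which needs $\textrm{char}\, \mathbb{F} \neq 2$; this is guaranteed since $\mathbb{F} = \mathbb{C}$. I expect no obstacle from a possible degeneracy of $B$: a degenerate symmetric form still has an orthogonal basis (some diagonal entries $B(e_i,e_i)$ are then zero, corresponding to natural vectors with $e_i^2 = 0$), and degeneracy does not affect the induction.
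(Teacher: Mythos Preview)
Your proof is correct and follows the same overall strategy as the paper: reduce the multiplication to a single symmetric object and diagonalise it. The paper phrases this in terms of structure matrices $M_1,\dots,M_n$ (with $M_k=0$ for $k\geq 2$) and invokes the criterion from \cite{velasco} that $\mathbb{A}$ is an evolution algebra iff these are simultaneously congruent to diagonal matrices; it then applies Takagi's factorisation to the one nonzero complex symmetric matrix $M_1$. You instead extract the symmetric bilinear form $B$ directly and use the elementary Gram--Schmidt/inductive argument that any symmetric bilinear form over a field of characteristic $\neq 2$ admits an orthogonal basis. Your route is more self-contained and in fact more general: it shows the result holds over any algebraically closed field (indeed any field) of characteristic different from two, whereas Takagi's theorem is specific to $\mathbb{C}$ and yields the unneeded extras of a unitary change of basis and non-negative real diagonal entries.
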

\begin{proof}
    Let $\mathbb A$ be a complex commutative algebra such that  $\dim {\mathbb A}^2 = 1$. Suppose ${\mathbb A}^2 = \langle e_1 \rangle $, then fix a basis ${\bf B}=\left\{e_1,\ldots, e_n\right\}$ and let $e_ie_j=m_{ij}e_1$ for any $1\leq i, j \leq n$, where $m_{ij}\in {\mathbb C}$.
    Consider its multiplication structure matrices $M_1=m_{ij}$, $M_k=(0)$ for $k\neq 1$. Then $\mathbb A$ is an evolution algebra if and only if $M_1, \ldots, M_n$ (see \cite{velasco}) are simultaneously diagonalizable via congruence. But, we only have one matrix to diagonalize, and it is symmetric, so by Takagi theorem, there exists a unitary matrix $V$ such that $V^TM_1V=D$ for some real diagonal matrix with non-negative entries. Hence, $\mathbb A$ is an evolution algebra.   
\end{proof}


    
    

As a consequence, we have the following result.

\begin{corollary}
    The class of complex $n$-dimensional evolution algebras with one-dimensional squares together with trivial algebra is a variety,  that we will be called $\mathfrak{E^{n}}.$
\end{corollary}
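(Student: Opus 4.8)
The plan is to realize $\mathfrak{E}^{n}$ as a Zariski-closed subset of the affine variety of $n$-dimensional complex commutative algebras, which is the appropriate meaning of ``variety'' in this setting. First I would recall that every evolution algebra is commutative, and invoke the preceding theorem: a complex commutative algebra with $\dim {\mathbb A}^2 = 1$ is automatically an evolution algebra. Consequently the class under consideration coincides with the set of $n$-dimensional complex commutative algebras ${\mathbb A}$ satisfying $\dim {\mathbb A}^2 \leq 1$, where the case $\dim {\mathbb A}^2 = 0$ recovers the trivial algebra and the case $\dim {\mathbb A}^2 = 1$ recovers the evolution algebras with one-dimensional square. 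This reformulation is the conceptual key, since $\dim {\mathbb A}^2 \leq 1$ is a closed condition whereas $\dim {\mathbb A}^2 = 1$ alone is not.

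Next I would fix the ambient space. A commutative multiplication on ${\mathbb C}^{n}$ with basis $\{e_1,\ldots,e_n\}$ is encoded by structure constants $c_{ij}^{k}\in{\mathbb C}$ defined by $e_i e_j = \sum_k c_{ij}^{k} e_k$, subject to the linear commutativity relations $c_{ij}^{k} = c_{ji}^{k}$. These constitute the affine variety $\mathrm{Com}_n$ of $n$-dimensional commutative algebras, and a subclass is a variety precisely when it is cut out by polynomial equations in the $c_{ij}^{k}$.

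Then I would encode $\dim {\mathbb A}^2$ as a rank. Form the $n \times \binom{n+1}{2}$ matrix $M({\mathbb A})$ whose column indexed by the unordered pair $(i,j)$, with $i \leq j$, is the coordinate vector $(c_{ij}^{1},\ldots,c_{ij}^{n})^{\mathrm{T}}$ of the product $e_i e_j$. Since ${\mathbb A}^2 = \langle e_i e_j : i \leq j \rangle$, the column space of $M({\mathbb A})$ is exactly ${\mathbb A}^2$, so $\dim {\mathbb A}^2 = \operatorname{rank} M({\mathbb A})$. Hence $\dim {\mathbb A}^2 \leq 1$ holds if and only if $\operatorname{rank} M({\mathbb A}) \leq 1$, which holds if and only if every $2\times 2$ minor of $M({\mathbb A})$ vanishes. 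Each such minor is a homogeneous degree-two polynomial in the entries $c_{ij}^{k}$, so the locus $\{\dim {\mathbb A}^2 \leq 1\}$ is the common zero set of a finite family of polynomials, that is, Zariski-closed inside $\mathrm{Com}_n$. Combined with the first paragraph, $\mathfrak{E}^{n}$ equals this closed locus and is therefore a variety.

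The only genuine subtlety, and the step I would treat most carefully, is the interplay between the rank stratification and closedness: $\operatorname{rank} \leq 1$ is a closed condition, but $\operatorname{rank} = 1$ is not, so one must adjoin the trivial algebra (the $\operatorname{rank} = 0$ point) in order to obtain a closed set. The preceding theorem is precisely what guarantees that adjoining the trivial algebra to the evolution algebras with one-dimensional square yields the entire determinantal locus $\{\dim {\mathbb A}^2 \leq 1\}$, so that no additional structure-constant identities, such as those forcing a diagonal (evolution) form, need to be imposed.
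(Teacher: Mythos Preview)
Your proposal is correct and follows precisely the reasoning the paper intends: the corollary is stated without proof, as an immediate consequence of the preceding theorem, and your argument makes explicit the standard step that identifying $\mathfrak{E}^{n}$ with the locus of commutative algebras satisfying $\dim{\mathbb A}^2\leq 1$ yields a Zariski-closed set via the vanishing of $2\times 2$ minors of the structure-constant matrix. There is no alternative approach in the paper to compare against; you have simply supplied the details the authors leave implicit.
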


In the following result, we prove that this variety is irreducible. For that, we have to introduce some notions. For a further introduction to the topic of degeneration, see for example \cite{degs22,ikp22} and the references therein. 
Given an $n$-dimensional vector space ${\bf V}$, the set ${\rm Hom}({\bf V} \otimes {\bf V},{\bf V}) \cong {\bf V}^* \otimes {\bf V}^* \otimes {\bf V}$ 
is a vector space of dimension $n^3$. This space has a structure of the affine variety $\mathbb{C}^{n^3}.$ 
Indeed, let us fix a basis $e_1,\dots,e_n$ of ${\bf V}$. Then any $\mu\in {\rm Hom}({\bf V} \otimes {\bf V},{\bf V})$ is determined by $n^3$ structure constants $c_{i,j}^k\in\mathbb{C}$ such that
$\mu(e_i\otimes e_j)=\sum_{k=1}^nc_{i,j}^ke_k$. A subset of ${\rm Hom}({\bf V} \otimes {\bf V},{\bf V})$ is {\it Zariski-closed} if it can be defined by a set of polynomial equations in the variables $c_{i,j}^k$ ($1\le i,j,k\le n$).

Let $T$ be a set of polynomial identities.
The algebra structures on ${\bf V}$ satisfying all the polynomial identities from $T$ form a Zariski-closed subset of the variety ${\rm Hom}({\bf V} \otimes {\bf V},{\bf V})$. We denote this subset by $\mathbb{L}(T)$.
The general linear group ${\rm GL}({\bf V})$ acts on $\mathbb{L}(T)$ by conjugation:
$$ (g * \mu )(x\otimes y) = g\mu(g^{-1}x\otimes g^{-1}y)$$ 
for $x,y\in {\bf V}$, $\mu\in \mathbb{L}(T)\subset {\rm Hom}({\bf V} \otimes {\bf V},{\bf V})$ and $g\in {\rm GL}({\bf V})$.
Thus, $\mathbb{L}(T)$ decomposes into the ${\rm GL}({\bf V})$-orbits that correspond to the isomorphism classes of algebras. 
Let $O(\mu)$ denote the ${\rm GL}({\bf V})$-orbit of $\mu\in\mathbb{L}(T)$ and $\overline{O(\mu)}$ its Zariski closure.
Let ${\bf A}$ and ${\bf B}$ be two $n$-dimensional algebras satisfying the identities from $T$ and $\mu,\lambda \in \mathbb{L}(T)$ represent ${\bf A}$ and ${\bf B}$ respectively.
We say that ${\bf A}$ {\it degenerates to} ${\bf B}$ and write ${\bf A}\to {\bf B}$ if $\lambda\in\overline{O(\mu)}$.
Note that in this case we have $\overline{O(\lambda)}\subset\overline{O(\mu)}$. Hence, the definition of degeneration does not depend on the choice of $\mu$ and $\lambda$. If ${\bf A}\not\cong {\bf B}$, then the assertion ${\bf A}\to {\bf B}$ 
is called a {\it proper degeneration}. We write ${\bf A}\not\to {\bf B}$ if $\lambda\not\in\overline{O(\mu)}$.
Let ${\bf A}$ be represented by $\mu\in\mathbb{L}(T)$. Then  ${\bf A}$ is  {\it rigid} in $\mathbb{L}(T)$ if $O(\mu)$ is an open subset of $\mathbb{L}(T)$.
Recall that a subset of a variety is called {\it irreducible} if it cannot be represented as a union of two non-trivial closed subsets. A maximal irreducible closed subset of a variety is called an {\it irreducible component}.
It is well known that any affine variety can be represented as a finite union of its irreducible components in a unique way.
The algebra ${\bf A}$ is rigid in $\mathbb{L}(T)$ if and only if $\overline{O(\mu)}$ is an irreducible component of $\mathbb{L}(T)$.

The last result characterizes the variety $\mathfrak{E^{n}}.$

 \begin{theorem}\label{Enirreducible}
The variety   $\mathfrak{E^{n}}$ is irreducible
and ${\bf E}_n$ is rigid.  
\end{theorem}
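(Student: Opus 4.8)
The plan is to realize $\mathfrak{E}^n$ as the image of an irreducible affine space under a polynomial map, deduce its irreducibility from that, and then obtain rigidity of $\mathbf{E}_n$ by a dimension count combined with the stated criterion that an algebra is rigid exactly when its orbit closure is an irreducible component. First I would make the ambient description explicit: by the preceding theorem every complex commutative algebra with $\dim\mathbb{A}^2=1$ is an evolution algebra, so inside $\mathrm{Hom}(\mathbf{V}\otimes\mathbf{V},\mathbf{V})$ the set $\mathfrak{E}^n$ is precisely the locus of commutative structures $\mu$ with $\dim\mathrm{Im}(\mu)\le 1$, which is Zariski-closed since it is cut out by commutativity (linear) together with the vanishing of the $2\times 2$ minors of the matrix of products.

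For irreducibility I would consider the morphism
\[
\Phi:\ \mathbb{C}^n\times\mathrm{Sym}_n(\mathbb{C})\ \longrightarrow\ \mathrm{Hom}(\mathbf{V}\otimes\mathbf{V},\mathbf{V}),\qquad (v,B)\mapsto\bigl[(e_i,e_j)\mapsto b_{ij}v\bigr].
\]
The source is an affine space, hence irreducible, and $\Phi$ is polynomial, so $\overline{\mathrm{Im}(\Phi)}$ is irreducible. Every structure in $\mathfrak{E}^n$ has the form $e_ie_j=b_{ij}v$ with $v$ spanning $\mathbb{A}^2$ (and $B=0$ for the trivial algebra), so $\mathrm{Im}(\Phi)=\mathfrak{E}^n$ set-theoretically; since $\mathfrak{E}^n$ is closed this yields $\overline{\mathrm{Im}(\Phi)}=\mathfrak{E}^n$, and therefore $\mathfrak{E}^n$ is irreducible.

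For rigidity I would compare two dimensions, using that an irreducible variety is its own unique irreducible component. The generic fibre of $\Phi$ over a point with $\dim\mathbb{A}^2=1$ is the one-parameter family $(\lambda v,\lambda^{-1}B)$, whence
\[
\dim\mathfrak{E}^n=\Bigl(n+\tfrac{n(n+1)}{2}\Bigr)-1=\tfrac{n^2+3n-2}{2}.
\]
On the other hand, by Theorem \ref{th20} the automorphism group of $\mathbf{E}_n$ fixes $e_1$ and acts as the complex orthogonal group on the remaining block, so $\dim\mathrm{Aut}(\mathbf{E}_n)=\dim\mathfrak{Der}(\mathbf{E}_n)=\tfrac{(n-1)(n-2)}{2}$ (Lemma \ref{deren}), giving
\[
\dim O(\mathbf{E}_n)=n^2-\tfrac{(n-1)(n-2)}{2}=\tfrac{n^2+3n-2}{2}.
\]
Since $\overline{O(\mathbf{E}_n)}$ is an irreducible closed subset of the irreducible variety $\mathfrak{E}^n$ of the same dimension, it equals $\mathfrak{E}^n$. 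Thus $\overline{O(\mathbf{E}_n)}$ is the unique irreducible component, $O(\mathbf{E}_n)$ is dense and open (being locally closed in its closure), and by the rigidity criterion $\mathbf{E}_n$ is rigid.

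I expect the main obstacle to be the dimension bookkeeping rather than any single hard idea: one must verify that the generic fibre of $\Phi$ is exactly one-dimensional, i.e.\ that a structure with $\dim\mathbb{A}^2=1$ recovers $(v,B)$ only up to the scaling $(\lambda v,\lambda^{-1}B)$, and that $\mathrm{Aut}(\mathbf{E}_n)$ really is the \emph{full} complex orthogonal group in the lower block, so that $\dim O(\mathbf{E}_n)$ matches $\dim\mathfrak{E}^n$ on the nose. Any miscount there would break the equality of dimensions that forces $\overline{O(\mathbf{E}_n)}=\mathfrak{E}^n$.
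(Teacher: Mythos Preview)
Your proof is correct and takes a genuinely different route from the paper's. The paper proceeds by classification and explicit degeneration: it quotes the classification of algebras in $\mathfrak{E}^n$ (nilpotent, $\mathbf{E}_k\oplus\mathbb{C}^{n-k}$, or $\mathbf{I}_k\oplus\mathbb{C}^{n-k}$), then exhibits an explicit one-parameter family realizing $\mathbf{E}_n\to\mathbf{I}_n$, further degenerations $\mathbf{E}_n\to\mathbf{N}_n$, $\mathbf{E}_k\oplus\mathbb{C}^{n-k}\to\mathbf{E}_{k-1}\oplus\mathbb{C}^{n-k+1}$, etc., and concludes that every algebra in $\mathfrak{E}^n$ lies in $\overline{O(\mathbf{E}_n)}$. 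Your argument bypasses both the classification and the degeneration computations: the parametrization $\Phi$ gives irreducibility for free, and the equality $\dim\mathfrak{E}^n=\dim O(\mathbf{E}_n)=\tfrac{n^2+3n-2}{2}$ forces $\overline{O(\mathbf{E}_n)}=\mathfrak{E}^n$ without ever writing down a single degeneration. What your approach buys is conceptual cleanliness and independence from the cited classification; what the paper's approach buys is the explicit degeneration graph (in particular the primary degeneration $\mathbf{E}_n\to\mathbf{I}_n$, detected by $\dim\mathfrak{Der}(\mathbf{I}_n)=\dim\mathfrak{Der}(\mathbf{E}_n)+1$), which carries strictly more information than irreducibility alone. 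Your caveats are well placed: the only points needing care are that the generic fibre of $\Phi$ over $\mu\neq 0$ is exactly $\mathbb{C}^*$ via $(\lambda v,\lambda^{-1}B)$, and that Theorem~\ref{th20} really identifies $\mathrm{Aut}(\mathbf{E}_n)$ with $\{1\}\times O(n-1,\mathbb{C})$ so that its dimension is $\tfrac{(n-1)(n-2)}{2}$; both are straightforward.
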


\begin{proof}
The case of $n=1$ is trivial. Hence, we will consider $n>1.$
Thanks to \cite{candido},  all algebras from  $\mathfrak{E^{n}}$
have the following type:
\begin{enumerate}
    \item nilpotent algebras.
    \item non-nilpotent algebras of type ${\bf E}_k \oplus {\mathbb C}^{n-k}$ for $1 \leq k\leq n.$
    \item non-nilpotent algebras of type ${\bf I}_k \oplus {\mathbb C}^{n-k}$ for $2 \leq k\leq n.$

\end{enumerate}

 The non-degenerate algebras  in this variety are precisely ${\bf E}_n$ and  ${\bf I}_n$. In Lemmas \ref{deren} and \ref{derin}, we have shown that the dimension of the algebra of derivations of the two evolution algebras with one-dimensional square and  with trivial annihilator differ in one unit. Namely,   $\dim \mathfrak{Der} ({\bf I}_n) = \dim \mathfrak{Der} ({\bf E}_n) + 1$. Recall that the dimension of the orbit of a $n$-dimensional algebra $\mathbb{A}$ is $n^2 - \dim \mathfrak{Der} (\mathbb{A})$. Hence, the dimensions of the orbits of ${\bf E}_n$ and ${\bf I}_n$ also differ in one unit. Moreover, we have ${\bf E}_n \to {\bf I}_n$, as we show below.

 Consider the parametrized isomorphism given by
    $$g(e_{1})(t)= (1+t^2)e_1 + {\bf i} e_2, \quad g(e_{2})(t)= {\bf i} (1-t^2)e_1 - e_2, \quad  g(e_k)(t)= \sqrt{2}t e_k, \mbox{ \ for \ }3\leq k\leq n,$$
       and its inverse, given by

     $$g(e_{1})(t)= \frac{1}{2t^2} e_1 + \frac{{\bf i}}{2t^2}e_2, 
     \quad g(e_{2})(t)=  \frac{{\bf i}(1-t^2)}{2t^2}e_1 - \frac{1+t^2}{2t^2}e_2, \quad  g(e_k)(t)= \frac{1}{\sqrt{2}t} e_k.$$
    Then, the action of $g$ gives us the family of commutative algebras $g(t)*{\bf E}_n$, given by
    \begin{longtable}{lcllcl}
    $e_1^2 $&$=$&$ (1+\frac{t^2}{2}) e_1 + (1+\frac{t^2}{2}) {\bf i} e_2,$ & $e_1 e_2 $&$=$&$ -\frac{{\bf i}t^2}{2}  e_1 + \frac{t^2}{2} e_2,$\\
    $e_2 ^2$&$ = $&$(1-\frac{t^2}{2})e_1 + (1-\frac{t^2}{2}) {\bf i} e_2,$&$ e_k^2$&$= $&$e_1 + {\bf i} e_2, $
    \end{longtable}
  \noindent  for $3\leq k\leq n$.  
    From here, it is clear that $\lim\limits_{t\to 0}g(t)*{\bf E}_n = {\bf I}_n$. Moreover, it follows  that this degeneration is primary by the dimension of the algebras of derivations.   

Thanks to \cite{ikp22}, the variety of complex $n$-dimensional nilpotent commutative algebras with dimension of the square not greater than one is irreducible 
with generic algebra ${\bf N}_n$ given by the following multiplication table $e_2^2=\ldots =e_{n}^2= e_1.$

Then, the rest of the necessary  degenerations are (here $g$ is inverse to the parametrized isomorphism):
\begin{longtable}{lclclcllcll}
${\bf E}_n$&$ \to $&${\bf N}_n$&$:$
&$g(e_1)$&$=$&$t^2 e_1,$ &$g(e_k)$&$=$&$t e_k,$ &$ \mbox{for} \ 2\leq k \leq n.$\\

${\bf E}_k \oplus {\mathbb C}^{n-k} $&$\to$&$ {\bf E}_{k-1} \oplus {\mathbb C}^{n-k+1}$&$:$&
$g(e_k)$&$=$&$  t e_k,$ &$g(e_m)$&$=$&$ e_m,$ &$\mbox{for} \ 1\leq m\neq   k \leq n.$\\

${\bf I}_k \oplus {\mathbb C}^{n-k}$&$ \to$&$ {\bf I}_{k-1} \oplus {\mathbb C}^{n-k+1}$&$:$&
$g(e_k)$&$= $&$ t e_k,$ &$g(e_m)$&$=$&$ e_m,$ &$\mbox{for} \ 1\leq m\neq   k \leq n.$
\end{longtable}
Summarizing, we have shown that every  algebra in $\mathfrak{E^{n}}$  is in the closure of the orbit of ${\bf E}_n.$
\end{proof}

By Theorem \ref{Enirreducible}, in order to describe the identities  defining the variety  $\mathfrak{E^{n}}$, it is enough to study the polynomial identities satisfied by ${\bf E}_n$. The following result slightly simplifies this problem.

\begin{theorem}
Given a identity of degree $n$. Then if  ${\bf E}_{n}$ satisfies the identity, then  ${\bf E}_{k}$ satisfies the identity too for ${k\in \mathbb N}$.    
\end{theorem}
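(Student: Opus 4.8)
The plan is to reduce to multilinear identities and then to verify the identity on all basis substitutions, organising the substitutions by their \emph{combinatorial pattern}. Throughout, the argument rests on the concrete description of $\mathbf{E}_k : e_1^2=\cdots=e_k^2=e_1$ (all other products of basis vectors zero): writing $W_k=\langle e_2,\dots,e_k\rangle$ and identifying $\mathbf{E}_k^2=\langle e_1\rangle$ with $\mathbb{F}$, the product of $ae_1+w$ and $be_1+w'$ with $w,w'\in W_k$ is $(ab+\langle w,w'\rangle)e_1$, where $\langle e_i,e_j\rangle=\delta_{ij}$. The direction $k\le n$ is immediate and needs no hypothesis on the degree: $\langle e_1,\dots,e_k\rangle$ is a subalgebra of $\mathbf{E}_n$ isomorphic to $\mathbf{E}_k$, and a subalgebra inherits every identity of the ambient algebra. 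For the remaining direction I would first assume $\operatorname{char}\mathbb{F}=0$ (as in the earlier sections), so that an identity of degree $n$ is equivalent to the family of its multilinear components, each of degree $d\le n$ obtained by polarisation. It then suffices to treat a multilinear identity $f(x_1,\dots,x_d)=0$ with $d\le n$ that holds on $\mathbf{E}_n$ and to show it holds on $\mathbf{E}_k$ for $k>n$; and since $f$ is multilinear, it is enough to evaluate $f$ on substitutions $x_i\mapsto e_{j_i}$ by basis vectors.

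The key principle I would establish is that the value $f(e_{j_1},\dots,e_{j_d})\in\langle e_1\rangle$ depends only on the \emph{pattern} of the substitution, namely the partition of $\{1,\dots,d\}$ recording which indices $j_i$ coincide, together with the block (if any) sent to the special index $1$. Indeed, two substitutions with the same pattern generate isomorphic subalgebras under a relabelling of indices fixing $e_1$, so $f$ takes equal values on them. Now the number of non-special indices occurring in a substitution equals the number of non-special blocks of its pattern, which is at most $d\le n$; it exceeds $n-1$ only for the pattern in which all $d$ indices are distinct and non-special with $d=n$. Apart from this single exceptional pattern, every pattern is realisable inside $\mathbf{E}_n$ (which carries the $n-1$ non-special vectors $e_2,\dots,e_n$ and the special $e_1$), where $f$ vanishes by hypothesis; hence it contributes $0$ to the evaluation on $\mathbf{E}_k$ as well.

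The single remaining pattern is the one in which $x_1,\dots,x_d$ receive $d$ pairwise distinct non-special basis vectors; this is the only configuration not housed inside $\mathbf{E}_n$, and it can occur only when $d=n$. I would dispose of it by what may be called the cherry argument: in any bracketing of a multilinear monomial of degree $\ge 2$ there is an innermost product combining two distinct leaves $x_p,x_q$, and under the substitution this product is $e_{j_p}e_{j_q}=\langle e_{j_p},e_{j_q}\rangle e_1=0$, since distinct non-special basis vectors are orthogonal. Hence every monomial of $f$, containing a zero factor, vanishes, so $f$ evaluates to $0$ on this pattern automatically. Combining the three cases shows that $f$ vanishes on all basis substitutions of $\mathbf{E}_k$, hence on $\mathbf{E}_k$.

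I expect this last pattern to be the main obstacle: it is exactly the configuration of $n$ pairwise orthogonal non-special vectors that does not fit into $\mathbf{E}_n$, which has only $n-1$ such vectors, and the hypothesis that the identity has degree $n$ is precisely what confines the whole difficulty to this one pattern, where it is resolved by orthogonality. The two subsidiary points to check with care are the linearisation step and the pattern-invariance principle: the former is where the characteristic hypothesis is genuinely used, while the latter rests on the explicit bilinear-form description of the product recalled at the outset.
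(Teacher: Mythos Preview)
Your argument is correct and rests on the same core idea as the paper---namely that the value of a (multilinear) monomial on basis vectors of $\mathbf{E}_k$ depends only on the coincidence pattern of the indices and on which block, if any, hits the special vector $e_1$, so any such substitution can be matched by one inside $\mathbf{E}_n$. The differences are in the packaging. For $k\le n$ the paper invokes the degeneration $\mathbf{E}_n\to\mathbf{E}_k\oplus\mathbb{C}^{n-k}$ and then passes to the quotient, whereas your subalgebra embedding $\langle e_1,\dots,e_k\rangle\cong\mathbf{E}_k\subset\mathbf{E}_n$ is more direct. For $k>n$ the paper simply writes down an index reassignment $j_l\mapsto i_l$ with $i_l\le n$, $j_l=1\Leftrightarrow i_l=1$, and $j_l=j_m\Leftrightarrow i_l=i_m$; you make two points explicit that the paper leaves implicit. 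First, you reduce to multilinear identities via polarisation (using $\operatorname{char}\mathbb{F}=0$), which is what justifies checking only basis substitutions. Second, you single out the one pattern the reassignment rule cannot realise---$n$ pairwise distinct non-special indices, which would require $n$ values in $\{2,\dots,n\}$---and dispose of it with your ``cherry'' argument that any innermost product of distinct non-special basis vectors is zero. Both refinements are genuine: without them the paper's proof, read literally, has small gaps, so your version is the more careful of the two while remaining in the same spirit.
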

\begin{proof}

Suppose  ${\bf E}_{n}$ satisfies an identity $p(x_{1},\ldots, x_{n}) = 0 $ of degree $n$. If $k\leq n$, then ${\bf E}_{k}$ satisfies the identity, because ${\bf E}_{n}\to {\bf E}_{k}\oplus {\mathbb C}^{n-k}$, so ${\bf E}_{k}\oplus {\mathbb C}^{n-k}$ satisfies it and then ${\bf E}_{k}\oplus {\mathbb C}^{n-k}/{\mathbb C}^{n-k}\cong{\bf E}_{k}$ satisfies it.

If $k>n$, then we have that $p(e_{i_1}, \ldots, e_{i_n})=0 $ for $i_l\leq n$. Now, for $p(e_{j_1}, \ldots, e_{j_n})$ where $j_l\leq k$, we assign to every index ${j_l}$ an index $i_l\leq n$ using the following rules:
$j_l = 1$ if and only if $i_l = 1$  and $j_l = j_m$ if and only if $i_l =  i_m$. Then we have $p(e_{j_1}, \ldots, e_{j_n})=p(e_{i_1}, \ldots, e_{i_n}) = 0$.    
\end{proof}

As a consequence, if we want to check if ${\bf E}_{n}$ satisfies an identity of degree $k\leq n$, we check it for ${\bf E}_{k}$.

\begin{proposition}
The variety  $\mathfrak{E^{n}}$ does not satisfy nontrivial identities of degree three, 
but it satisfies a nontrivial identity of degree four: $((xy)z)t=((xy)t)z.$ 
\end{proposition}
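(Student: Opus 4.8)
The plan is to exploit the irreducibility established in Theorem \ref{Enirreducible}. Since $\mathfrak{E^{n}} = \overline{O({\bf E}_n)}$ and, as recorded in the discussion of $\mathbb{L}(T)$, the locus of algebra structures satisfying a fixed polynomial identity is a ${\rm GL}({\bf V})$-invariant Zariski-closed set, an identity holds on all of $\mathfrak{E^{n}}$ if and only if it holds on ${\bf E}_n$. Thus it suffices to study the identities of ${\bf E}_n$. As the base field is $\mathbb{C}$ (characteristic zero), every homogeneous identity is equivalent to its full multilinearization, so it is enough to decide which \emph{multilinear} identities ${\bf E}_n$ satisfies in each degree. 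The computational engine is the product formula in ${\bf E}_n$: writing $x=\sum_i x_i e_i$ and setting $B(x,y)=\sum_i x_i y_i$, one has $xy=\sum_i x_i y_i\,e_i^2=B(x,y)\,e_1$, and since $e_1 u = u_1 e_1$ this iterates to
\[
(xy)z=B(x,y)\,z_1\,e_1,\qquad ((xy)z)t=B(x,y)\,z_1\,t_1\,e_1 .
\]

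For the degree-four identity I would simply read off both sides from this formula. We get $((xy)z)t=B(x,y)\,z_1\,t_1\,e_1$ and, interchanging the roles of $z$ and $t$, $((xy)t)z=B(x,y)\,t_1\,z_1\,e_1$; as the scalars commute these coincide, so $((xy)z)t=((xy)t)z$ holds in every ${\bf E}_n$, hence on all of $\mathfrak{E^{n}}$. This identity is \emph{nontrivial} because $((xy)z)t$ and $((xy)t)z$ are two distinct monomials of the free commutative nonassociative algebra (they share the left-comb shape but differ in the leaves that can be matched at the root), so their difference is a nonzero element of the free object and the identity is not a consequence of commutativity alone.

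For the absence of nontrivial degree-three identities (here $n\geq 2$) I would show that the three multilinear monomials $(xy)z$, $(yz)x$, $(zx)y$ induce linearly independent trilinear maps on ${\bf E}_n$. The product formula evaluates them as $B(x,y)z_1 e_1$, $B(y,z)x_1 e_1$ and $B(z,x)y_1 e_1$. Suppose $\alpha(xy)z+\beta(yz)x+\gamma(zx)y$ vanishes identically. Substituting $(x,y,z)=(e_2,e_2,e_1)$ kills the second and third terms and forces $\alpha=0$; substituting $(e_1,e_2,e_2)$ forces $\beta=0$; and $(e_2,e_1,e_2)$ forces $\gamma=0$. Hence the degree-three multilinear part maps injectively, so the only multilinear degree-three identity of ${\bf E}_n$, and therefore of $\mathfrak{E^{n}}$, is the trivial one.

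The substantive points are the two reductions rather than the arithmetic. The first is that passing to the orbit closure does not enlarge the set of identities, which is exactly the Zariski-closedness and ${\rm GL}({\bf V})$-invariance of $\mathbb{L}(T)$ set up before Theorem \ref{Enirreducible}. The second is the multilinearization step, which uses characteristic zero to restrict attention to the finitely many multilinear monomials in each degree; once it is in place, the explicit product formula reduces everything to elementary relations among the bilinear form $B$ and first coordinates, and the displayed substitutions finish the argument. I expect the only delicate bookkeeping to be keeping \emph{nontrivial} correctly interpreted as \emph{nonzero in the free commutative algebra}, so that commutativity is already built into the free object while the two degree-four monomials genuinely remain distinct there.
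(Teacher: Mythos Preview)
Your argument is correct. The paper itself offers no detailed proof, stating only that ``the present proposition can be obtained by a direct calculation,'' so your write-up is in fact a fully worked version of what the paper leaves implicit: reduce to ${\bf E}_n$ via the orbit-closure description of $\mathfrak{E^{n}}$ from Theorem~\ref{Enirreducible}, then use the product formula $xy=B(x,y)e_1$ to evaluate the relevant multilinear monomials. Your substitutions $(e_2,e_2,e_1)$, $(e_1,e_2,e_2)$, $(e_2,e_1,e_2)$ cleanly separate the three degree-three monomials, and your verification of the degree-four identity and its nontriviality in the free commutative magma are both sound. The only caveat, which you already flag, is that the degree-three statement requires $n\geq 2$; for $n=1$ the algebra ${\bf E}_1$ is associative and the claim fails, so the proposition should be read with that standing assumption.
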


The present proposition can be obtained by a direct calculation and it gives the following question.

\medskip

\noindent
{\bf Open question}.
Characterize the variety  $\mathfrak{E^{n}}$ with a set of identities.


\begin{thebibliography}{}
\bibitem{NYM} Boudi N., Cabrera Casado Y., Siles Molina M.,
Natural families in evolution algebras,
 Publicacions Matemàtiques,  66 (2022), 1, 159--181.

\bibitem{candido}
Brache C., Martín Barquero D., Martín González C., Sánchez-Ortega J.,
Evolution algebras with one-dimensional square, arXiv:2103.01625 (2021)

\bibitem{velasco}
Bustamante M.D., Mellon P., Velasco M.V., 
Determining when an algebra is an evolution algebra, Mathematics, (2020), 8, 1349. 

\bibitem{yolder}
Cabrera Casado Y., Cadavid P., Reis T., 
Derivations and loops of some evolution algebras,  arXiv: 2203.13657 (2022)



\bibitem{yolannal}
Cabrera Casado Y., Cadavid P.,  Rodiño Montoya M.,  Rodriguez P., 
On the characterization of the space of derivations in evolution algebras,  
Annali di Matematica Pura ed Applicata (4), 200 (2021),  2, 737--755.


\bibitem{CSV2} Cabrera Casado Y., Kanuni M., Siles Molina M, 
Basic ideals in evolution algebras,  
Linear Algebra and its Applications, 570
(2019), 148--180.


\bibitem{yolt}
Cabrera Casado Y., Martín Barquero D., Martín González C., Tocino  A., 
Tensor product of evolution algebras, 
Mediterranean Journal of Mathematics,
 20 (2023),   1, 43.


\bibitem{yol22}
Cabrera Casado Y., Martín Barquero D., Martín González C., Tocino  A., 
On simple evolution algebras of dimension two and three. Constructing simple and semisimple evolution algebras, arXiv: 2206.13912 (2022)


\bibitem{csv16} Cabrera Casado, Y., Siles Molina, M., Velasco, M.V., 
Evolution algebras
of arbitrary dimension and their decompositions, 
Linear Algebra and its Applications, 495
(2016), 122--162.


\bibitem{CSV2t} Cabrera Casado Y., Siles Molina M., Velasco M.V., 
Classification of three dimensional evolution algebras, 
Linear Algebra and its Applications, 524 (2017), 68--108.

\bibitem{cgo13}  Camacho L.M., Gómez J.R., Omirov B., 
The derivations of some evolution algebras, 
Linear and Multilinear Algebra, 61 (2013), 309--322.



\bibitem{CGMGM}
Cardoso Gonçalves M., Gonçalves D.,  Martín Barquero D., Martín González C.,  Siles  Molina M., Squares and associative representations of two-dimensional evolution algebras, Journal of Algebra and Its Applications, 20 (2021),  6,   2150090. 



\bibitem{CLR}
Casas J. M., Ladra M., Rozikov U.,  
A chain of evolution algebras, 
Linear Algebra and its Applications, 435 (2011),  4, 852–870.
 

\bibitem{survey}
Ceballos M.,  Falcón R.,  Núñez-Valdés J.,  Tenorio Á., 
A historical perspective of Tian's evolution algebras, 
Expositiones Mathematicae, 40 (2022), 3, 819--843.


\bibitem{3dimCV}
Celorrio M., Velasco M., 
Classifying evolution algebras of dimensions two and three,
Mathematics, 7 (2019),  12, 1236.


\bibitem{CLTV}
Costoya C., Ligouras P., Tocino A., Viruel A., 
Regular evolution algebras are universally finite, 
Proceedings of the American Mathematical Society, 150 (2022), 3, 919--925.






\bibitem{EL2} Elduque A., Labra A., 
On nilpotent evolution algebras, Linear Algebra and its Applications,  
505 (2016), 11--31.



\bibitem{EL21} Elduque A., Labra A., 
Evolution algebras, automorphisms, and graphs, 
Linear and Multilinear Algebra, 69 (2021),   2, 331--342.


\bibitem{FFN}
Falcón O., Falcón R.,  Núñez J., 
Classification of asexual diploid organisms by means of strongly isotopic evolution algebras defined over any field, 
Journal of  Algebra, 472 (2017), 573--593.

\bibitem{degs22}
Fernández Ouaridi A., Kaygorodov I., Khrypchenko M., Volkov Yu., Degenerations of nilpotent algebras,
Journal of Pure and Applied  Algebra,     226  (2022),   3,   106850.



\bibitem{ikp22}
Ignatyev M.,   Kaygorodov I.,   Popov Yu.,   
The geometric classification of 2-step nilpotent algebras and applications,  
Revista Matemática Complutense, 35 (2022), 3, 907--922.

 
\bibitem{LLR}
Labra A., Ladra M., Rozikov U.,  
An evolution algebra in population genetics, 
Linear Algebra and its Applications, 457 (2014), 348--362.

\bibitem{LR13}
Ladra M., Rozikov U.,   
Evolution algebra of a bisexual population, 
Journal of   Algebra, 378 (2013), 153--172.



\bibitem{LR17}
Ladra M., Rozikov U., 
Flow of finite-dimensional algebras, 
Journal of   Algebra, 470 (2017), 263--288


   
\bibitem{M14}
Murodov Sh., 
Classification dynamics of two-dimensional chains of evolution algebras, 
International Journal of Mathematics, 25 (2014),  2, 1450012.



\bibitem{P21}
Paniello I.,  
In-evolution operators in genetic coalgebras,
Linear Algebra and its Applications, 614 (2021), 197--207.


\bibitem{RM}
 Rozikov U.,  Murodov Sh.,   
 Chain of evolution algebras of "chicken'' population, 
 Linear Algebra and its Applications, 450 (2014), 186--201.


\bibitem{sri}
Sriwongsa S.,  Zou Yi M., 
On automorphism groups of idempotent evolution algebras, 
Linear Algebra and its Applications, 641 (2022), 143--155.


\bibitem{TIAN} Tian J., 
Evolution algebras and their applications,  
 Lecture Notes in Mathematics, 1921. Springer, Berlin, 2008. xii+125 pp


\bibitem{TIAN-vo} 
Tian J.,  Vojtěchovský P., 
Mathematical concepts of evolution algebras in non-Mendelian genetics, Quasigroups and Related Systems, 14 (2006),   1, 111--122.


   
 
 
 
 
\end{thebibliography}
\end{document}